\documentclass[11pt]{amsart}

\usepackage{ytableau}

\usepackage{amsmath, amsthm, amssymb,mathtools,bm,amsfonts}
\usepackage{tikz-cd}
\usepackage{tikz}
\usepackage[T1]{fontenc}

\usepackage[pdftex,hidelinks,backref=page]{hyperref}
\hypersetup{
    colorlinks,
    citecolor=magenta,
    filecolor=magenta,
    linkcolor=blue,
    urlcolor=black
}

\usepackage{xcolor}

\renewcommand{\emptyset}{\varnothing}
\newcommand{\rank}{\operatorname{rank}}

\theoremstyle{definition}
\newtheorem{thm}{Theorem}[section]
\newtheorem{cor}[thm]{Corollary}
\newtheorem{lem}[thm]{Lemma}

\newtheorem{prop}[thm]{Proposition}
\newtheorem{defn}[thm]{Definition}

\newtheorem{eg}[thm]{Example}
\newtheorem{rem}[thm]{Remark}

\newtheorem{question}[thm]{Question}
	\newtheorem{fact}[thm]{Fact}

\numberwithin{equation}{section}

\newcommand{\ar}[1]
{{\xrightarrow{#1}}}

\newcommand{\sym}[1]{\operatorname{Sym}_{#1}} 
\newcommand{\suchthat}{\;|\;}

\newcommand{\schub}[1]{\mathfrak{S}_{#1}} 
\date{}

\newcommand{\idem}{\operatorname{id}} 
\newcommand{\ct}{\operatorname{ev_0}} 

\newcommand{\fl}[1]{\mathrm{Fl}_{#1}}
\newcommand{\GL}{\mathrm{GL}}

\renewcommand\emph[1]{\textcolor{blue}{\textit{#1}}} 

\newcommand{\syt}{\operatorname{SYT}} 
\newcommand{\gl}{\operatorname{GL}}

\newcommand{\XM}{\mathcal{B}_{\mathcal{M}}}
\newcommand{\sgn}{\operatorname{sgn}}

\title[Two-row Springer fiber components]{The Springer geometry of Specht polynomials, and Schubert cycle positivity for two-row Springer fiber components}

\author{Hunter Spink}
\address{Department of Mathematics,
University of Toronto, Toronto, ON M5S 2E4, Canada}
\email{\href{mailto:hunter.spink@utoronto.ca}{hunter.spink@utoronto.ca}}

\author{Vasu Tewari}
\address{Department of Mathematical and Computational Sciences, University of Toronto Mississauga, Mississauga, ON L5L 1C6, Canada}
\email{\href{mailto:vasu.tewari@utoronto.ca}{vasu.tewari@utoronto.ca}}

\thanks{
HS and VT acknowledge the support of the NSERC, respectively [RGPIN-2024-04181] and [RGPIN-2024-05433].}

\begin{document}

\begin{abstract}
We show that the type $A$ Springer representation is realized geometrically in the
homology of the complete flag variety by Specht polynomials.
For any partition
$\lambda$, we identify the classical Specht polynomial generators of the Specht module
$V_\lambda$ with the classes of a family of disjoint \textit{Levi--Richardson} varieties, and this family degenerates to the Springer fiber $\mathcal{B}_\lambda$.
This factors Springer's Schubert positivity problem for Springer fiber components through a chain of positive
expansions, from Specht polynomials through the Joseph polynomials to the Schubert
cycles.

For two-row partitions we make each of these expansions combinatorially
explicit, giving manifestly nonnegative Schubert cycle expansions of both the
Levi-Richardson cycles and the Springer fiber components. This resolves Springer's
question for two-row fibers and proves two conjectures of Precup and Sabando-Alvarez, and
identifies the Springer basis with the web basis for two-row Specht modules. As an
application, we deduce the Schubert cycle expansions of the components of the Poisson
degeneracy locus of the flag variety.
\end{abstract}

\maketitle

\section{Introduction}
We work over $\mathbb{C}$. Let $B,B^-\subset \gl_n$ denote the upper and lower triangular invertible matrices. The complete flag variety $\fl{n}\coloneqq \gl_n/B$ is identified with the set of flags of subspaces $$\fl{n}=\{\{0\}=V_0\subsetneq V_1\subsetneq V_2\subsetneq \cdots \subsetneq V_{n-1}\subsetneq V_n=\mathbb{C}^n\suchthat \dim V_i=i\}$$
via the transitive action of $\gl_n$ with $B$ the stabilizer of the coordinate flag $V_i=\langle {\sf e}_1,\ldots,{\sf e}_i\rangle$. We denote the set of complete flags preserved by an $n\times n$ matrix $M$ by
$$\mathcal{B}_M\coloneqq \{(V_i)_{i=0}^{n}\suchthat MV_i\subset V_i\}\subset \fl{n}.$$
Our primary interest is in the \emph{Springer fiber}
$\mathcal{B}_\lambda\coloneqq \mathcal{B}_N\subset \fl{n}$, where $N$ is a fixed nilpotent matrix of type $\lambda=(\lambda_1\ge \lambda_2\ge \cdots \ge \lambda_\ell)$.
As shown by Spaltenstein \cite{Spa76}, the Springer fiber decomposes into irreducible components $\mathcal{B}_T$ indexed by standard Young tableaux (SYTs):
$$\mathcal{B}_\lambda=\bigcup_{T\in \syt(\lambda)}\mathcal{B}_T.$$
It is equidimensional of dimension $\dim \mathcal{B}_\lambda=\sum_{i}^{\lambda_1} \binom{\lambda_i^{\top}}{2}$, where $\lambda^{\top}$ is the transposed partition.
Here and throughout, an unadorned $\sum\binom{\lambda_i^{\top}}{2}$ denotes this sum over $1\le i\le \lambda_1$.

The homology of the flag variety carries the \emph{Springer action} of the symmetric group $S_n$, realizing the regular representation $H_\bullet(\fl{n})\cong \mathbb{Z}[S_n]$ with a distinguished geometric basis given by the \emph{Schubert cycles} $X^w\coloneqq \overline{BwB}$, $w\in S_n$.
The action descends to $H_{\dim \mathcal{B}_\lambda}(\mathcal{B}_\lambda)=\bigoplus_{T\in \syt(\lambda)}\mathbb{Z}[\mathcal{B}_T]$, giving the \emph{Springer representation}: a copy of the Specht module $V_\lambda\subset H_\bullet(\fl{n})$, with the classes $\{[\mathcal{B}_T]\}$ forming the \emph{Springer basis}.
Expressing the Springer basis in the Schubert basis is a long-standing question of Springer \cite{springerconj}, open beyond a few special cases \cite{Gu89,spink2025richardsontableauxschubertpositivity}. We resolve it for all \emph{two-row} Springer fibers, $\lambda=(n-k,k)$, and along the way prove two conjectures of Precup--Sabando-Alvarez \cite[Conjectures 1,2]{sabandoalvarez2026idealsdefiningcomponentstworow}.

We show that Springer's problem is a special case of a broader one concerning the classical \emph{Specht polynomial} generating set for $V_\lambda$.
Our route runs through a degeneration communicated to us by  Martha Precup \cite{MarthaPriv},
which at one end is a disjoint union of varieties we call $\lambda^{\top}$-Levi Richardsons and at the other end is $\mathcal{B}_\lambda$ itself.
One of our main theorems identifies the $\lambda^{\top}$-Levi Richardsons with the Specht polynomial generators of $V_\lambda$, and the degeneration yields the first of the following series of positive expansions in $H_\bullet(\fl{n})$:
$$\underbrace{\lambda^{\top}\text{-Levi-Richardsons}}_{\substack{\text{Specht polynomials $f_T$}\\\text{(Section~\ref{sec:SpringerRepRichardson})}}}\xrightarrow{\text{positively expands}}\underbrace{\text{Springer basis elements}}_{\substack{\text{Joseph polynomials $J_T$}\\\text{(Section~\ref{sec:Joseph}})}}\xrightarrow{\text{positively expands}}\underbrace{\text{Schubert cycles}}_{\substack{\text{Schubert degree}\\\text{polynomials } \mathcal{D}_w\\\text{(Section~\ref{subsec:divdiff})}}},$$
where the Joseph polynomials are the least understood.
Combinatorializing these expansions would be very interesting (see Questions~\ref{question:RctoSchubert} and ~\ref{question:PositiveExpansions}).
Before turning to the two-row case, we orient the reader to the three families of
polynomials in this diagram and to what is already known about the two expansions.

\begin{enumerate}
    \item \textit{The polynomial model.}
    To work with polynomial representatives for homology classes, we embed $H_\bullet(\fl{n})$ in the ring of root polynomials $\mathbb{Z}[\{z_i-z_j\}_{1\le i < j \le n}]$ by an integral normalization of the symmetric harmonics of Bernstein--Gelfand--Gelfand \cite{BGG73}.
    We note that effective classes always expand into \emph{positive root polynomials} $\mathbb{N}[\{z_i-z_j\}_{1\le i < j \le n}]$.
    \item \textit{The rightmost family.} The Schubert degree polynomials $\mathcal{D}_w(z_1,\ldots,z_n)$ appear implicitly in \cite[Theorem 3.13]{BGG73}. They are the degree polynomials $\mathcal{D}_w^{PS}(y_1,\ldots,y_{n-1})$ studied in Postnikov--Stanley \cite{PS09} considered in the integral normalization, i.e. after applying $\prod y_i^{a_i}\mapsto \prod a_i!(z_i-z_{i+1})^{a_i}$. Every effective cycle expands nonnegatively into these degree polynomials, and as shown in \textit{ibid.} the expansion into $\mathbb{N}[\{z_i-z_j\}_{1\le i < j \le n}]$ is encoded by certain chains in the Bruhat order.
    \item \textit{The middle family.} The \emph{Joseph polynomials} \cite{Jo84,Jo89,KZJ14, RTVZJ12} are usually defined as multidegrees of orbital varieties. Hotta \cite{Ho84} showed that the span of the Joseph polynomials for $\lambda$ is isomorphic, as an $S_n$-representation, to the Springer representation, with $J_T$ corresponding to $[\mathcal{B}_T]$; we show in Section~\ref{sec:Joseph} that with our normalizations the two agree identically.
    \item \textit{The second arrow.}
    The expansion of Joseph polynomials into
    $\mathbb{N}[\{z_i-z_j\}_{i<j}]$ was considered in \cite[Section 8]{Jo89}; it would
    be interesting to obtain such combinatorial expansions directly, should decomposing
    them into the coarser Schubert degree polynomials prove too difficult.
    \item \textit{Prior work for two columns.} For $\lambda_1\le 2$ the complete flag variety is spherical with respect to the $\lambda^{\top}$-Levi subgroup of $\GL_n$. The Schubert cycle expansion of the
    $\lambda^{\top}$-Levi Richardsons was computed by Wyser \cite{W13}, and the Joseph
    polynomials via orbital varieties by Knutson--Zinn-Justin \cite{KZJ14} and by Rimanyi--Tarasov--Varchenko--Zinn-Justin  \cite{RTVZJ12}. The intermediate positive
    expansions are not known combinatorially.
\end{enumerate}

For two-row $\lambda=(n-k,k)$ we give combinatorial rules for these expansions.
The first expansion is the expansion of the Specht polynomials into the web basis via skein relations \cite{Rho19}.
The second is more delicate, relying on a fortuitous interaction between long-range divided differences in type $A$ and setting variables to zero, studied by the present authors with Nantel Bergeron, Lucas Gagnon, and Philippe Nadeau \cite{BGNST1,NST_c}, with the combinatorics being governed by the geometry of the \emph{quasisymmetric flag variety} $\operatorname{QFL}_n\subset \fl{n}$ \cite{BGNST2}.
We do not know a direct geometric link between $\operatorname{QFL}_n$ and two-row Springer fiber components, yet the techniques developed for Schubert positivity in the former proved indispensable for the latter.

As a final application we show that the Poisson degeneracy locus of $\fl{n}$, as described by Casbi--Masoomi--Yakimov~\cite{CMY}, has components which are left $S_n$-translates of the two-row $\lambda^{\top}$-Levi Richardsons, and therefore our results also compute their Schubert cycle expansions.

\subsection{The degeneration}
Fix $\lambda$ a partition of $n$.
For a diagonal matrix $D_{\lambda^{\top}}$ with eigenspaces of dimensions $\lambda^{\top}$ there is a decomposition $$
\mathcal{B}_{D_{\lambda^{\top}}}=\bigsqcup_{\mathcal{C}\in S_{\lambda^{\top}}\setminus S_n} \mathcal{R}_{\mathcal{C}}
$$
indexed by right cosets of the parabolic subgroup $S_{\lambda^{\top}}\coloneqq \prod S_{\lambda_i^{\top}}\subset S_n$, with each $\mathcal{R}_\mathcal{C}$ irreducible of dimension $\sum \binom{\lambda_i^{\top}}{2}=\dim \mathcal{B}_\lambda$.
These components admit several equivalent descriptions:
\begin{enumerate}
    \item as Levi orbits $\mathcal{R}_{\mathcal{C}}=\GL_{\lambda^{\top}}wB$ for any $w\in \mathcal{C}$, and Levi subgroup $\GL_{\lambda^{\top}}\coloneqq\prod \GL_{\lambda_i^{\top}}\subset \GL_n$;
    \item as Richardson varieties $\mathcal{R}_{u,v}\coloneqq \overline{BvB}\cap \overline{B^-uB}$, for $u,v$ the minimal and maximal length representatives in $\mathcal{C}$;
    \item as embedded copies of $\prod \fl{|\mathcal{P}_i|}\hookrightarrow \fl{n}$ under the pattern map of Bergeron--Sottile \cite{BS98,BS99} associated to ordered set partitions $\mathcal{P}_1\sqcup \cdots \sqcup \mathcal{P}_{\lambda_1}= \{1,\ldots,n\}$ with $|\mathcal{P}_i|=\lambda_i^{\top}$.
\end{enumerate}

We will henceforth refer to these as \emph{$\lambda^{\top}$-Levi Richardsons}.
The degeneration we are concerned with takes $\mathcal{B}_{D_{\lambda^{\top}}}\to \mathcal{B}_\lambda$ by degenerating $D_{\lambda^{\top}}$ to the nilpotent matrix $N$ of Jordan type $\lambda$ while staying within the set of diagonalizable matrices with eigenspaces of dimensions $\lambda^{\top}$ (see Section~\ref{sec:degeneration}).

\subsection{The Springer geometry of Specht polynomials}

The homology of the flag variety can be understood as an $S_n$-representation via the embedding $H_\bullet(\fl{n})\hookrightarrow \mathbb{Z}[z_1,\ldots,z_n]$ given by associating to a homology class $[X]$ the \emph{degree polynomial}
\begin{equation}
\label{eqn:generating}\mathcal{D}_X(z_1,\ldots,z_n)\coloneqq \sum_{a_1+\cdots+a_n=\dim(X)}\left(\int_{X}x_1^{a_1}\cdots x_n^{a_n}\right)z_1^{a_1}\cdots z_n^{a_n}\in \mathbb{Z}[z_1,\ldots,z_n]\end{equation}
where $x_i=-c_1(\mathcal{F}_i/\mathcal{F}_{i-1})$ is the negative $i$'th Chern root of the tautological flag of subbundles $0\subsetneq \mathcal{F}_1\subsetneq \cdots \subsetneq \mathcal{F}_{n-1}\subsetneq \underline{\mathbb{C}}^n$, and the Springer action of $S_n$ on $H_\bullet(\fl{n})$ comes from $S_n$ permuting the $z_i$ variables. Because $x_1+\cdots+x_n=0$ in $H^\bullet(\fl{n})$, the image of $H_\bullet(\fl{n})$ factors through the \emph{root polynomials} $\mathbb{Z}[z_1-z_2,z_2-z_3,\ldots,z_{n-1}-z_n]$.

\begin{thm}[{Section~\ref{sec:SpringerRepRichardson}}]
\label{thm:introSpecht}
Fix a partition $\lambda$ of $n$.
\begin{enumerate}
    \item The Springer representation is the Specht polynomial realization of $V_\lambda$, i.e. generated by the \emph{Specht polynomials} $$f_T\coloneqq \prod_{a\text{ above $b$ in the same column of }T} (z_a-z_b)\in \mathbb{Z}[z_1,\ldots,z_n]$$
    for column-strict fillings $T$ of $\lambda$ with distinct entries $\{1,\ldots,n\}$.
    \item The Specht polynomials $f_T$ are the degree polynomials of the $\lambda^{\top}$-Levi Richardsons  $\mathcal{R}_{\mathcal{C}}$.
    More specifically there is a natural bijection  $\mathcal{C}\mapsto T(\mathcal{C})$ from $S_{\lambda^{\top}}\setminus S_n\to $ column-strict fillings of $\lambda$, and we have
    $$\mathcal{D}_{\mathcal{R}_{\mathcal{C}}}(z_1,\ldots,z_n)=f_{T(\mathcal{C})}.$$
\end{enumerate}
In particular the Springer representation is spanned over the integers by the $\lambda^{\top}$-Levi Richardsons:
\begin{equation}
\label{eqn:lusztigeqn}
\bigoplus \mathbb{Z}[\mathcal{B}_T]=\mathbb{Z}\{[\mathcal{R}_{\mathcal{C}}]\suchthat \mathcal{C}\in S_{\lambda^{\top}}\setminus S_n\}\subset H_\bullet(\fl{n}).\end{equation}
\end{thm}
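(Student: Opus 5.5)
I will prove part (2) first, compute directly, and then get part (1) and \eqref{eqn:lusztigeqn} from the degeneration.

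\textbf{Step 1: part (2).} Fix a coset $\mathcal{C}$ and write $\mathcal{R}_{\mathcal{C}}=\GL_{\lambda^{\top}}wB$. Under the pattern-map description, $\mathcal{R}_{\mathcal{C}}\cong\prod_i \fl{|\mathcal{P}_i|}$. The set $\mathcal{P}_i$ records which coordinate indices $a$ have $w^{-1}$-images in the $i$-th block. Define $T(\mathcal{C})$ by writing the elements of $\mathcal{P}_i$ in increasing order down column $i$. This is visibly a bijection between right cosets and column-strict fillings with distinct entries.

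Restricted to $\mathcal{R}_{\mathcal{C}}$, the tautological flag $\mathcal{F}_\bullet$ is assembled from the tautological flags $\mathcal{F}^{(i)}_\bullet$ of the factors. Concretely, $V_a$ is $\bigoplus_i V^{(i)}_{m_i(a)}$, where $m_i(a)=|\mathcal{P}_i\cap[1,a]|$. Hence the Chern root $x_a$ pulls back to the Chern root $x^{(i)}_{j}$ of the factor $\fl{|\mathcal{P}_i|}$ containing $a$, at position $j$ equal to the rank of $a$ in $\mathcal{P}_i$.

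The integral in \eqref{eqn:generating} therefore factors as a product over columns. This reduces us to the case of a single full flag variety $\fl{m}$, where the claim is $\mathcal{D}_{\fl{m}}=\prod_{p<q}(z_p-z_q)$. I will check that claim as follows.
\begin{itemize}
\item $\mathcal{D}_{\fl{m}}$ is antisymmetric under the Springer action, since the fundamental class spans the sign isotypic component in top degree.
\item It has degree $\binom m2$, so it is a scalar multiple of the Vandermonde.
\item The scalar is fixed by one coefficient: the coefficient of $z_1^{m-1}z_2^{m-2}\cdots$ is $\int x_1^{m-1}\cdots x_{m-1}$, which equals $1$ (equivalently $\prod a_i!$ normalization against $\mathcal{D}_{w_0}$).
\end{itemize}
Sign conventions (from $x_i=-c_1$) must be tracked here. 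With variables relabeled by $\mathcal{P}_i$, this gives $\mathcal{D}_{\mathcal{R}_{\mathcal{C}}}=f_{T(\mathcal{C})}$.

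\textbf{Step 2: part (1) and \eqref{eqn:lusztigeqn}.} By the degeneration of Section~\ref{sec:degeneration}, the flat family $\mathcal{B}_{D}$ for $D$ diagonalizable of eigenspace type $\lambda^{\top}$ specializes to $\mathcal{B}_\lambda$. Homology classes are constant in flat families, so
$$\sum_{\mathcal{C}}[\mathcal{R}_{\mathcal{C}}]=\sum_T m_T[\mathcal{B}_T].$$
The key point is that we can instead degenerate with $D$ replaced by $gDg^{-1}$, or equivalently move the Levi. The individual components $\mathcal{R}_{\mathcal{C}}$ are then permuted by $S_n$ via $\mathcal{C}\mapsto \mathcal{C}\sigma$, which is compatible with $\sigma$ acting on $f_{T}$ by permuting variables. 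Better still, each $[\mathcal{R}_{\mathcal{C}}]$ individually lies in the span of the $[\mathcal{B}_T]$. To see this, degenerate a one-parameter family in which the limit of the closure of $\mathcal{R}_{\mathcal{C}}\times\mathbb{C}^*$ is a union of components of $\mathcal{B}_\lambda$, because it has the same dimension $\sum\binom{\lambda_i^\top}{2}$ and lies in $\mathcal{B}_\lambda$. This gives the positive expansion $[\mathcal{R}_{\mathcal{C}}]=\sum_T c_{\mathcal{C},T}[\mathcal{B}_T]$ with $c_{\mathcal{C},T}\ge 0$.

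Combining with Step 1, the span of the $f_T$ (the classical Specht module, of rank $|\syt(\lambda)|$) lies in the span of the $[\mathcal{B}_T]$, which also has rank $|\syt(\lambda)|$. For equality over $\mathbb{Z}$ and not just over $\mathbb{Q}$, I use unitriangularity. Order SYT appropriately and take the cosets with $T(\mathcal{C})$ standard. Then the limit of $\mathcal{R}_{\mathcal{C}}$ contains $\mathcal{B}_{T(\mathcal{C})}$ with multiplicity one, and the other components that appear are earlier in the order. I would verify this with a torus-fixed-point or generic-point argument.

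The \textbf{main obstacle} is Step 2: identifying the limit cycles and proving the unitriangularity with multiplicity one. I would first try to choose $N$ and the family so that an open dense subset of $\mathcal{B}_{T(\mathcal{C})}$ (Spaltenstein's description via successive quotients) visibly deforms to an open subset of $\mathcal{R}_{\mathcal{C}}$.
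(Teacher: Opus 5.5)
Your Step 1 is essentially the paper's proof of part (2): the pattern map, the splitting of the tautological flag by columns, and the Vandermonde for $\fl{m}$. Using the degeneration to get $[\mathcal{R}_{\mathcal{C}}]\in\bigoplus_T\mathbb{Z}_{\ge 0}[\mathcal{B}_T]$ is also a valid alternative route to that inclusion. The paper instead checks directly that $\ct\prod_i\partial_{\mathcal{P}_i}$ annihilates the Tanisaki ideal $\mathcal{I}_\lambda$, using the alphabet coproduct and a pigeonhole bound.

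The gap is the integral equality \eqref{eqn:lusztigeqn}. Inclusion plus equal ranks only shows that $\mathbb{Z}\{f_T\}$ has finite index in $\bigoplus_T\mathbb{Z}[\mathcal{B}_T]$. The unitriangularity you propose to close this is not proved, and as stated it is false. Because the $[\mathcal{B}_T]$ are linearly independent, the multiplicities in any such limit are determined by homology classes alone, so no choice of family can change them. Already for $\lambda=(2,1)$, take $N{\sf e}_2={\sf e}_1$ and $N{\sf e}_1=N{\sf e}_3=0$. Definition~\ref{def:springer_fiber_component} then gives:
\begin{itemize}
\item $\mathcal{B}_{T_1}=\{V_2=\ker N\}$, with $\mathcal{D}_{\mathcal{B}_{T_1}}=z_1-z_2$, for $T_1$ with rows $(1,2),(3)$;
\item $\mathcal{B}_{T_2}=\{V_1=\operatorname{im}N\}$, with $\mathcal{D}_{\mathcal{B}_{T_2}}=z_2-z_3$, for $T_2$ with rows $(1,3),(2)$.
\end{itemize}
The coset with $T(\mathcal{C})=T_2$ (columns $\{1,2\},\{3\}$) has $\mathcal{D}_{\mathcal{R}_{\mathcal{C}}}=z_1-z_2=\mathcal{D}_{\mathcal{B}_{T_1}}$, so $\mathcal{B}_{T_2}$ does not appear in its limit at all. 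Some other pairing of cosets with tableaux might be unitriangular, but proving that for general $\lambda$ would be a substantial result in its own right.

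Separately, replacing $D$ by $gDg^{-1}$ does not permute the $\mathcal{R}_{\mathcal{C}}$ via $\mathcal{C}\mapsto\mathcal{C}\sigma$: left translation acts trivially on homology and does not realize the Springer action. You do not actually use this claim, though.

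The missing idea, which the paper uses, is saturation. The standard Specht polynomials span $L=\mathbb{Z}\{f_T\}$ and stay linearly independent modulo every prime. For instance, in lex order with $z_n>\cdots>z_1$, each standard $f_T$ has leading coefficient $\pm1$ and leading monomial $\prod_a z_a^{\mathrm{row}_T(a)-1}$, and this monomial determines $T$. Hence $L$ is saturated in $\mathbb{Z}[z_1,\ldots,z_n]$, and so in $H_\bullet(\fl{n})$. Then $L\subseteq B=\bigoplus_T\mathbb{Z}[\mathcal{B}_T]$ with equal ranks forces $B\subseteq (L\otimes\mathbb{Q})\cap H_\bullet(\fl{n})=L$. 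If you substitute this for your multiplicity-one plan, your degeneration argument gives the full theorem with no analysis of limit cycles. Part (1) then follows, since the Springer action on $H_\bullet(\fl{n})\subset\mathbb{Z}[z_1,\ldots,z_n]$ is permutation of the variables.
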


\begin{rem}
Various aspects of this theorem were known previously, though not in the present form.
    \begin{enumerate}
        \item[(a)] Over $\mathbb{Q}$, under the variable substitution $z_1^{a_1}\cdots z_n^{a_n}\mapsto \frac{1}{a_1!a_2!\cdots a_n!}\prod y_i^{a_i}$ the image of $H_\bullet(\fl{n})$  is the space of symmetric harmonics \cite[Theorem~3.13]{BGG73}, those rational polynomials killed by every differential operator $f(\frac{d}{dy_1},\ldots,\frac{d}{dy_n})$ with $f$ symmetric and without constant term.
        Specht polynomials in the $z_i$ variables map to scalar multiples of Specht polynomials in the $y_i$ variables, so the combinatorial content of part (1) is that these Specht polynomials lie in the harmonic space of the Tanisaki ideal $\mathcal{I}_\lambda$, and follows from work of Rhoades--Yu--Zhao \cite[Lemma 11]{RYZ20} -- we will give a short direct proof of the special case we need.
        \item[(b)]  Martha Precup \cite{MarthaPriv} communicated to us that the degeneration considered above yields a geometric proof of \eqref{eqn:lusztigeqn} over the rationals.
        By identifying the Specht polynomials explicitly we can upgrade this to work over $\mathbb{Z}$ (Theorem~\ref{thm:Zupgrade}).
    \end{enumerate}
\end{rem}

\subsection{Combinatorial nonnegativity conjectures}
For geometric reasons \cite{Bri05} the fundamental cycle of any subvariety $[X]\in H_\bullet(\fl{n})$ expands nonnegatively in the Schubert cycle basis
$$[X]=\sum_{w\in S_n} h^X_w[X^w]\text{ with }h^X_w\in \mathbb{N}.$$
For $X=X^w$, we denote the associated \emph{Schubert degree polynomial} $\mathcal{D}_w(z_1,\ldots,z_n)$.
The nonnegative degree polynomials $\mathcal{D}_w^{PS}(y_1,\ldots,y_{n-1})$ of Postnikov--Stanley \cite{PS09} recover $\mathcal{D}_w(z_1,\ldots,z_n)$ under the substitution $y_1^{a_1}\cdots y_{n-1}^{a_{n-1}}\mapsto \prod_{i=1}^{n-1} a_i!(z_i-z_{i+1})^{a_i}$.
Then the above expansion is identical to
$$\mathcal{D}_X(z_1,\ldots,z_n)=\sum_{w\in S_n}h^X_w\mathcal{D}_w(z_1,\ldots,z_n)\text{ with }h_w^X\in \mathbb{N}.$$
For geometrically natural $X$, a central question in algebraic combinatorics is to find manifestly nonnegative combinatorial rules for $h_w^X$.
\begin{rem}
 The coefficient of $(z_1-z_2)^{a_1}\cdots(z_{n-1}-z_n)^{a_{n-1}}$ in $\mathcal{D}_X(z_1,\ldots,z_n)$ is
$$\int_X x_1^{a_1}(x_1+x_2)^{a_2}\cdots (x_1+\cdots+x_{n-1})^{a_{n-1}}\in \mathbb{N}$$
which is nonnegative as $x_1+\cdots+x_i$ is the first Chern class of the ample line bundle $\det \mathcal{F}_i^{\vee}$.
Therefore effective cycles in $H_\bullet(\fl{n})$ map to positive root polynomials $\mathbb{N}[z_1-z_2,z_2-z_3,\ldots,z_{n-1}-z_n]$.
Postnikov--Stanley \cite{PS09} give a combinatorial rule via chains in the Bruhat order for the coefficients of $\mathcal{D}^{PS}_w(y_1,\ldots,y_n)$, and therefore we can deduce a nonnegative expansion of $\mathcal{D}_X(z_1,\ldots,z_n)$ into positive root polynomials from a nonnegative expansion into Schubert degree polynomials.
\end{rem}

For $X=\mathcal{R}_{\mathcal{C}}$, finding such a rule for $h^X_w$ is equivalent to a Schubert positivity question of the alphabet coproduct map.
The Borel presentation \cite{Bor53} of the cohomology ring of the flag variety is as the ring of symmetric coinvariants
$$
H^\bullet(\fl{n})=\mathbb{Z}[x_1,\ldots,x_n]/\langle f-\ct f\suchthat f\in \mathbb{Z}[x_1,\ldots,x_n]^{S_n}\rangle.
$$
This has a distinguished basis of the \emph{Schubert polynomials} $\{\schub{w}(x_1,\ldots,x_n)\suchthat w\in S_n\}$ of Lascoux--Sch\"utzenberger \cite{LS82}, which are the Kronecker dual basis in cohomology to the Schubert cycle basis under the natural perfect pairing $H^\bullet(\fl{n})\otimes H_\bullet(\fl{n})\to \mathbb{Z}$.

There is great interest in the combinatorial nonnegativity of $h^X_w$ when $X=\mathcal{R}_{u,v}$ is a Richardson variety, as then $h^X_w$ is the coefficient of $\schub{v}$ in the product $\schub{u}\schub{w}$, so they are the \emph{generalized Littlewood--Richardson coefficients}.
In the special case $\mathcal{R}_{\mathcal{C}}$ for $\mathcal{C}\in S_{\lambda^{\top}}\setminus S_n$ they have an alternative combinatorial interpretation in terms of the alphabet coproduct because of their connection to the pattern maps of Bergeron--Sottile.

More precisely, given an ordered set partition $\mathcal{P}_1\sqcup \cdots \sqcup \mathcal{P}_{r}=\{1,\ldots,n\}$, Bergeron--Sottile \cite{BS98,BS99} showed that every coefficient in the expansion
$$\schub{w}(x_1,\ldots,x_n)=\sum_{(w_1,\ldots,w_r)\in S_{|\mathcal{P}_1|}\times \cdots \times S_{|\mathcal{P}_r|}}a_{w_1,\ldots,w_r}\prod_{i=1}^r\schub{w_i}(\{x_j\}_{j\in \mathcal{P}_i})\in \bigotimes_{i=1}^r H^\bullet(\fl{|\mathcal{P}_i|})$$
can be expressed positively in terms of generalized Littlewood--Richardson coefficients.
The coefficient where $w_1,\ldots,w_r$ are all the longest elements of their respective symmetric groups is the coefficient $h^X_w$ where $X=\prod \fl{|\mathcal{P}_i|}\hookrightarrow \fl{n}$ is embedded via the pattern mapping.

\begin{question}
\label{question:RctoSchubert}
    For $\mathcal{C}\in S_{\lambda^{\top}}\setminus S_n$, find a combinatorially nonnegative algorithm for the coefficients of the expansion $[\mathcal{R}_{\mathcal{C}}]=\sum h^{\mathcal{R}_\mathcal{C}}_w[X^w]$, or equivalently of the Specht polynomial $f_{T(\mathcal{C})}(z_1,\ldots,z_n)=\sum h_w^{\mathcal{R}_{\mathcal{C}}}\mathcal{D}_w(z_1,\ldots,z_n)$ into Schubert degree polynomials.
\end{question}

The geometric degeneration gives a potential avenue for resolving this question.
The existence of the degeneration $\mathcal{B}_{D_{\lambda^{\top}}}\to \mathcal{B}_\lambda$ directly implies
$$[\mathcal{R}_{\mathcal{C}}]=\sum g^{\mathcal{C}}_T [\mathcal{B}_T]\text{ with }g^{\mathcal{C}}_T\ge 0,$$
so we can factor the above question into two potentially easier questions.
We will show later that under our normalizations  $$\mathcal{D}_{\mathcal{B}_T}(z_1,\ldots,z_n)=J_T(z_1,\ldots,z_n),$$ where $J_T(z_1,\dots,z_n)$ is the \emph{Joseph polynomial}  \cite{Jo84,Jo89,KZJ14, RTVZJ12}, defined as the multidegree of an orbital variety (see Section~\ref{sec:Joseph}).
\begin{question}
\label{question:PositiveExpansions}\leavevmode
\begin{enumerate}
    \item\label{it:RtoB} Find a combinatorially nonnegative algorithm for the coefficients $g^{\mathcal{C}}_T$ above, i.e. show combinatorially that the Specht polynomials expand nonnegatively into Joseph polynomials.
    \item\label{it:SpringerQuestion} (Springer \cite{springerconj}) Find a combinatorially nonnegative algorithm for the coefficients $h_w^{\mathcal{B}_T}$ above, i.e. show that the Joseph polynomials expand nonnegatively into degree polynomials .
\end{enumerate}
\end{question}

A combinatorial formula for $h^{\mathcal{B}_T}_w$ was determined by G\"uemes \cite{Gu89} for $\lambda=(n-k,1^k)$ of hook shape. The tableaux in this case are all ``Richardson tableaux'' of Karp--Precup \cite{karp2025richardsontableauxcomponentsspringer}, meaning there is an equality with a Richardson variety $\mathcal{B}_T=\mathcal{R}_{u,v}$, and a combinatorial formula for $h^{\mathcal{B}_T}_w$ for all Richardson tableaux $T$ was recently established by the present authors \cite{spink2025richardsontableauxschubertpositivity}.

\subsection{Two-row Springer fiber components}

We carry out the above program in full in the context of \emph{two-row Springer fibers} \cite{Fun03,SW12,GNST25}, i.e. with $\lambda=(n-k,k)$. Each right coset  $\mathcal{C}\in S_{\lambda^{\top}}\setminus S_n$ canonically induces a matching $\mathcal{M}$ of $\{1,\dots,n\}$, and we then have
$$\mathcal{D}_{\mathcal{R}_{\mathcal{C}}}(z_1,\ldots,z_n)=\prod_{(i<j)\in \mathcal{M}}(z_i-z_j)\in H_\bullet(\fl{n}).$$
As described in Fung \cite{Fun03} and Stroppel--Webster \cite{SW12}, there is a natural way to associate what we will call a ``packed noncrossing matching'' of $[n]$ to a tableau $T\in \syt(\lambda)$, so we may also write $\mathcal{B}_{\mathcal{M}}$ for the component associated to $T$. Then we show $$\mathcal{D}_{\mathcal{B}_{\mathcal{M}}}(z_1,\ldots,z_n)=\prod_{(i<j)\in \mathcal{M}}(z_i-z_j)\in H_\bullet(\fl{n})$$
as well, so these arise as a distinguished subset of the Levi-Richardson homology classes (the corresponding statement for Joseph polynomials was known previously, see Remark~\ref{rem:AlternateJoseph} and \cite{RTVZJ12}).
This identifies the Springer basis/Joseph polynomial basis $\{[\mathcal{B}_\mathcal{M}]\}$ for $V_\lambda$ with the \emph{web basis} associated to packed noncrossing matchings.
That the Specht-to-web transition is unitriangular is due to Russell--Tymoczko \cite{RT19}; what is new here is the geometric identification of the web basis with the Springer basis of two-row fiber components, via degree polynomials.
There are positive straightening rules \cite{Rho19} for expanding Specht polynomials into the web basis, and these yield combinatorially positive rules for the expansions of these Levi-Richardson cycles into the two-row Springer fiber cycles
$$[\mathcal{R}_{\mathcal{C}}]=\sum_{\text{packed noncrossing matchings } \mathcal{M}} g^{\mathcal{M}}_{\mathcal{C}}[\mathcal{B}_{\mathcal{M}}]\text{ with }g_{\mathcal{C}}^{\mathcal{M}}\in \mathbb{N}.$$
For two-row $\lambda$ this resolves Question~\ref{question:PositiveExpansions}\eqref{it:RtoB}.

We also resolve Springer's question (Question~\ref{question:PositiveExpansions}\eqref{it:SpringerQuestion}) for all components $\mathcal{B}_T$ of two-row Springer fibers, and additionally compute the reverse Artin representatives of their Poincar\'e dual cohomology classes. This verifies two conjectures of Precup and Sabando-Alvarez \cite[Conjectures 1,2]{sabandoalvarez2026idealsdefiningcomponentstworow}.
We therefore have in $H_\bullet(\fl{n})$ combinatorial expansions for $\lambda=(n-k,k)$:
$$\{[\mathcal{R}_{\mathcal{C}}]\suchthat \mathcal{C}\in S_{\lambda^{\top}}\setminus S_n\}\xrightarrow{\text{positively expands}}\{[\mathcal{B}_{T}]\suchthat T\in \syt(\lambda)\} \xrightarrow{\text{positively expands}}\{[X^w]\suchthat w\in S_n\},$$
and hence fully resolve Question~\ref{question:RctoSchubert} in this special case via the program outlined above.

Our strategy follows \cite{spink2025richardsontableauxschubertpositivity},
which expressed the degree map on $\mathcal{B}_T$ for $T$ a Richardson tableau as a composite of long-range divided differences $\partial_{ij}$ interspersed with setting variables to zero.
Using the Stroppel--Webster presentation of the cohomology ring of two-row Springer fiber components \cite[Theorem 9]{SW12}, we establish such a form for the degree map on $\mathcal{B}_T$ in the two-row case.
We do not have a geometric reason to expect that  the component homology classes in the two-row and Richardson tableau cases would be so expressible -- they seem independently fortuitous.

The fact that long-range divided difference operations satisfy combinatorially positive straightening relations into products of short-range divided differences when certain variables are set to zero is the key insight in the formalism of quasisymmetric divided differences of the present authors and Philippe Nadeau \cite{NST_a}, and motivated the techniques of both \cite{spink2025richardsontableauxschubertpositivity} and the present paper.

Finally, in Section~\ref{sec:FurtherMatchings} we relate our varieties to two others indexed by matchings. Casbi--Masoomi--Yakimov \cite{CMY} study Richardson varieties $\mathcal{R}_{v,w}^{CMY}$ -- components of the Poisson degeneracy locus of $\fl{n}$ -- indexed by a matching together with extra data; and among the subvarieties of $\operatorname{QFL}_n$ are left-translates of Richardson varieties $X(\mathcal{M})$ indexed by noncrossing matchings. In both cases these are left $S_n$-translates of $\mathcal{R}_{\mathcal{C}}$ varieties, so our results yield combinatorially nonnegative Schubert cycle expansions for $[\mathcal{R}_{v,w}^{CMY}]$ and $[X(\mathcal{M})]$ -- recovering, for $X(\mathcal{M})$, the expansion of \cite{NST_c,BGNST2}.

\subsection*{Acknowledgements}
We would like to thank Martha Precup and Cristina Sabando-Alvarez for helpful conversations. We are extremely grateful in particular to Martha for informing us of the existence of \eqref{eqn:lusztigeqn}, and Ruizhen Liu for supplying a significant simplification to the presentation of the degeneration in Section~\ref{sec:degeneration}.

\section{Preliminaries}
Write $[n]=\{1,\ldots,n\}$ and let $S_n$ denote the symmetric group of permutations of $[n]$, with simple transpositions $s_i=(i,i+1)$, reflections $\tau_{ij}=(i,j)$, and longest word $w_\circ\in S_n$. In what follows $S_n$ acts on any polynomial ring by permuting variables, $\sigma\cdot f(y_1,\ldots,y_n)=f(y_{\sigma(1)},\ldots,y_{\sigma(n)})$. For $D\subset[n]$ we denote by $Y_D$ the operation
$$Y_Df(x_1,\ldots,x_n)=f(x_1,\ldots,x_n)|_{x_i=0\text{ for }i\in D},$$
and write $\ct\coloneqq Y_{[n]}$ for the constant term map $f\mapsto f(0,0,\ldots,0)$.

\subsection{Divided differences, Schubert polynomials, and degree polynomials}
\label{subsec:divdiff}

The flag variety $\fl{n}$ is equipped with the natural tautological flag of subbundles $$\{0\}=\mathcal{F}_0\subsetneq \mathcal{F}_1\subsetneq \cdots \subsetneq \mathcal{F}_{n-1}\subsetneq \mathcal{F}_n=\underline{\mathbb{C}}^n,$$
where the fiber of $\mathcal{F}_i$ over $MB$ is the span $V_i$ of the first $i$ columns of $M$. Denoting $x_i=-c_1(\mathcal{F}_i/\mathcal{F}_{i-1})$ and $\sym{n}\coloneqq \mathbb{Z}[x_1,\ldots,x_n]^{S_n}$,
the cohomology of $\fl{n}$ has the well-known Borel presentation \cite{Bor53}
$$H^\bullet(\fl{n})=\mathbb{Z}[x_1,\ldots,x_n]/\langle f-\ct f\suchthat f\in \sym{n}\rangle$$
Define the \emph{divided difference operation} $\partial_i=\frac{\idem-s_i}{x_i-x_{i+1}}$.
These satisfy the nil-Hecke relations
$$\partial_i^2=0\quad \partial_i\partial_{i+1}\partial_i=\partial_{i+1}\partial_i\partial_{i+1}\quad \partial_i\partial_j=\partial_j\partial_i\text{ for }|i-j|\ge 2.$$

These relations imply that $\partial_{i_1}\cdots \partial_{i_k}=0$ if $s_{i_1}\cdots s_{i_k}=w$ is not a reduced word, and is equal to a fixed operator $\partial_w$ independent of the choice of reduced word otherwise.
The Schubert polynomials of Lascoux--Sch\"utzenberger \cite{LS82} $\{\schub{w}\suchthat w\in S_n\}\subset H^\bullet(\fl{n})$ are a $\mathbb{Z}$-basis for the cohomology of $\fl{n}$.
They are recursively defined by setting $\schub{w_\circ}=x_1^{n-1}x_2^{n-2}\cdots x_{n-1}^1$ and $\partial_i\schub{w}=\delta_{w(i)>w(i+1)}\schub{ws_i}$.
The divided differences $\partial_1,\ldots,\partial_{n-1}$ descend to endomorphisms of $H^\bullet(\fl{n})$, and the Schubert polynomials satisfy the duality \begin{equation}
\label{eqn:ev0duality}\ct\partial_{w}\schub{w'}(x_1,\ldots,x_n)=\begin{cases}1&w=w'\\0&\text{otherwise.}\end{cases}\end{equation}
We will also need to work with \emph{long-range divided differences} $\partial_{ij}f=\frac{f-\tau_{ij}f}{x_i-x_j}$.
Unlike the $\partial_i$, the long-range $\partial_{ij}$ need not preserve Schubert-positivity.

The natural perfect degree map pairing $H_\bullet(\fl{n})\otimes H^\bullet(\fl{n})\to \mathbb{Z}$ allows us to view homology classes as homomorphisms $H^\bullet(\fl{n})\to \mathbb{Z}$. For an algebraic cycle $X\subset \fl{n}$, the operation associated to $[X]$ is the degree map $\int_X$, which is zero on $H^a(\fl{n})$ for all $a\ne \dim X$, and pairing with the fundamental class when $a=\dim X$. As shown in \cite{BGG73} this operation for $[X^w]$ is $\ct \partial_w$, and so by the duality in \eqref{eqn:ev0duality} we always have
\begin{align}\label{eq:class_of_X}
    [X]=\sum_{w\in S_n}\left(\int_X\schub{w}(x_1,\ldots,x_n)\right)[X^w].
\end{align}
We can therefore interpret $h^X_w$ as the coefficients in the expansion
$$\int_X=\sum_{w\in S_n} h^X_w \ct \partial_w,$$
where the equality is of functionals on
 $H^\bullet(\fl{n})$. This is the Kostant--Kumar approach to Schubert calculus \cite{KK86}, which allows one to work with the combinatorics of degree maps without passing to the dual combinatorics of Schubert polynomials.

An alternate representation of homology classes $[X]$ from this perspective is via \emph{degree polynomials} (\cite[Theorem 3.13]{BGG73}, \cite{PS09}), which we present in a normalization that preserves integrality. Let $z_1,\ldots,z_n$ be a formal dual basis to formal indeterminates $x_1,\ldots, x_n$. Then there is an injection $$H_\bullet(\fl{n})\hookrightarrow \mathbb{Z}[x_1,\ldots,x_n]^{\vee}\cong \mathbb{Z}[z_1,\ldots,z_n],$$
and so a homology class $[X]$ can be expressed as the polynomial $\mathcal{D}_X(z_1,\ldots,z_n)$ from \eqref{eqn:generating}. The polynomial $\mathcal{D}_w(z_1,\ldots,z_n)$ associated to $[X^w]$ is the \emph{Schubert degree polynomial}. Then we have $\langle \mathcal{D}_w(z_1,\ldots,z_n),\schub{w'}(x_1,\ldots,x_n)\rangle=\delta_{w,w'}$, and we can consequently write
\begin{align*}[X]&=\sum \langle \mathcal{D}_X(z_1,\ldots,z_n),\schub{w}(x_1,\ldots,x_n)\rangle [X^w]\text{ and }\\\mathcal{D}_X(z_1,\ldots,z_n)&=\sum \langle \mathcal{D}_X(z_1,\ldots,z_n),\schub{w}(x_1,\ldots,x_n)\rangle \mathcal{D}_w(z_1,\ldots,z_n).
\end{align*}

Finally, for $D=(d_1<d_2<\cdots<d_r)\subset [n]$ we write $\partial_D=\prod_{i=1}^r\prod_{j=1}^{i-1}\partial_{d_id_j}$, the full symmetrizer on the variables $x_{d_1},\ldots,x_{d_r}$.
If $D=\{a,b\}$ then $\partial_D=\partial_{ab}$ is the long-range divided difference, and we adopt the conventions $\partial_\emptyset=\idem$ (the identity).
Furthermore, $\partial_{[n]}=\partial_{w_\circ}$ and we have the well-known Weyl symmetrization formula $$\partial_Df(x_{d_1},\ldots,x_{d_r})=\sum_{w\in S_r}\frac{f(x_{d_{w(1)}},\ldots,x_{d_{w(r)}})}{\prod_{i<j}(x_{d_{w(i)}}-x_{d_{w(j)}})}\in \mathbb{Z}[x_{d_1},\ldots,x_{d_r}]^{S_r}.$$
We will often use the fact that if $f,g\in \mathbb{Z}[\{x_i\}_{i\in D}]$ and $g$ is symmetric with no constant term then
\begin{equation}
\label{eqn:pullsymthrough}Y_D\partial_D gf=Y_Dg\partial_Df=0,\end{equation}
where the first equality follows from the explicit description of $\partial_D$ above.

\begin{fact}
\label{fact:flagdel}
   We have
   $[\fl{n}]=\ct\partial_{w_{\circ}}$ and $\mathcal{D}_{\fl{n}}=\prod_{i<j}(z_i-z_j)$. In particular, $$\int_{\fl{n}}x_1^{a_1}\cdots x_n^{a_n}=\begin{cases}(-1)^{\ell(\sigma)}&(a_1,\ldots,a_n)=(n-\sigma(1),\ldots,n-\sigma(n))\text{ for some }\sigma\in S_n\\0&\text{otherwise.}\end{cases}$$
\end{fact}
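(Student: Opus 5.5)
Two things have to be proved: that $[\fl{n}]$, as a functional on $H^\bullet(\fl{n})$, equals $\ct\partial_{w_\circ}$; and that its degree polynomial is $\prod_{i<j}(z_i-z_j)$. The first is the case $w=w_\circ$ of the Bernstein--Gelfand--Gelfand identification of $\int_{X^w}$ with $\ct\partial_w$ recalled in Section~\ref{subsec:divdiff}, since $X^{w_\circ}=\overline{Bw_\circ B}=\fl{n}$. For a self-contained check, use the duality \eqref{eqn:ev0duality}. We have $\int_{\fl{n}}\schub{w}=\delta_{w,w_\circ}$, because $\schub{w_\circ}=x_1^{n-1}\cdots x_{n-1}$ is the point class and the other Schubert polynomials of top degree vanish. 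The functional $\ct\partial_{w_\circ}$ takes the same values on the Schubert basis, so the two functionals agree.

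For the integrals, write $\partial_{w_\circ}=\partial_{[n]}$ and apply the Weyl symmetrization formula to a monomial $x^a$ with $\sum a_i=\binom n2$:
$$\partial_{[n]}x^a=\frac{\sum_{w\in S_n}\sgn(w)\,w\cdot x^a}{\prod_{i<j}(x_i-x_j)}=\frac{\det\bigl(x_i^{a_j}\bigr)}{\Delta(x)}.$$
The right side is an alternant divided by the Vandermonde $\Delta(x)$, with the sign depending on conventions. It vanishes if two of the $a_i$ coincide. Otherwise it is $\pm$ a Schur polynomial of degree $\sum a_i-\binom n2=0$, hence a constant. Since the $a_i$ are distinct nonnegative integers summing to $\binom n2$, they must be a permutation of $\{0,\dots,n-1\}$, say $a_i=n-\sigma(i)$. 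Reordering the columns to $(x_i^{n-j})$ gives a sign $(-1)^{\ell(\sigma)}$. The Vandermonde determinant $\det(x_i^{n-j})$ equals $\prod_{i<j}(x_i-x_j)$ exactly, so the ratio is $(-1)^{\ell(\sigma)}$. As a sanity check, $\sigma=\mathrm{id}$ returns the point class integral $1$. Applying $\ct$ then gives the stated formula for $\int_{\fl{n}}x_1^{a_1}\cdots x_n^{a_n}$.

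For the degree polynomial, substitute into \eqref{eqn:generating}:
$$\mathcal{D}_{\fl{n}}=\sum_{\sigma\in S_n}(-1)^{\ell(\sigma)}\prod_i z_i^{\,n-\sigma(i)}=\det\bigl(z_i^{\,n-j}\bigr)=\prod_{i<j}(z_i-z_j),$$
again by the Vandermonde identity.

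The main obstacle is bookkeeping of signs: the convention $x_i=-c_1$ and the ordering of factors in $\partial_{[n]}$. Both are pinned down by the normalization $\int_{\fl{n}}x_1^{n-1}\cdots x_{n-1}=1$, which I would use as the anchor for all signs.
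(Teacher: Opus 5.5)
Your proof is correct and is essentially the paper's argument. The paper also takes $[\fl{n}]=\ct\partial_{w_\circ}$ from $\fl{n}=X^{w_\circ}$. It then uses $\partial_{w_\circ}w=(-1)^{\ell(w)}\partial_{w_\circ}$ to reduce to weakly decreasing exponents, computes $\partial_{w_\circ}x_1^{n-1}\cdots x_{n-1}=\prod_{i<j}(x_i-x_j)/\prod_{i<j}(x_i-x_j)=1$, and kills every other such monomial because a repeated exponent $a_i=a_{i+1}$ lets $\partial_i$ annihilate it. Your alternant-over-Vandermonde formulation packages exactly these two facts (the column-reordering sign and vanishing for repeated columns) into one step, and your Vandermonde evaluation of $\mathcal{D}_{\fl{n}}$ makes explicit what the paper leaves implicit.
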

\begin{proof}
    The first equality follows as $\fl{n}=X^{w_{\circ}}$.
    As $\partial_{w_{\circ}}w=(-1)^{\ell(w)}\partial_{w_{\circ}}$, it suffices to show for $a_1\ge \cdots \ge a_n$ that $\ct \partial_{w_{\circ}}x_{1}^{a_1}\cdots x_n^{a_n}=1$ if $(a_1,\ldots,a_n)=(n-1,\ldots,0)$ and $0$ otherwise.
    For the first case $\partial_{w_{\circ}} x_1^{n-1}x_2^{n-2}\cdots x_{n-1}=\frac{\prod_{i<j} (x_i-x_j)}{\prod_{i<j}(x_i-x_j)}=1$.
    The second case follows because if $\sum a_i=\ell(w_{\circ})=\binom{n}{2}$ then there exists $i$ such that $a_i=a_{i+1}$, so $\partial_{w_\circ}x_{1}^{a_1}\cdots x_n^{a_n}=\partial_{w_\circ s_i}\partial_ix_{1}^{a_1}\cdots x_n^{a_n}=0$.
\end{proof}

\subsection{Springer fiber components}
 As mentioned in the introduction, for a fixed nilpotent matrix $N$ of type $\lambda$ the irreducible components $\mathcal{B}_T$ of the Springer fiber $$\mathcal{B}_\lambda\coloneqq \{\mathcal{F}\in \fl{n}\suchthat N\mathcal{F}_i\subset \mathcal{F}_i\text{ for all $i$}\}\subset \fl{n}$$ were shown by Spaltenstein \cite{Spa76} to be naturally indexed by standard Young tableaux.
We draw Young diagrams in English convention, with the longest row on top.
For a partition $\lambda$ we let $\syt(\lambda)$ denote the standard fillings of $\lambda$ that are \emph{increasing} in the rows and columns (so that $1$ always appears in the top left corner).

\begin{defn}\label{def:springer_fiber_component}
    Fix $\lambda$ a partition of $n$, and let $N$ be a nilpotent matrix of type $\lambda$.
    For each $T\in \syt(\lambda)$ consider the subset $\mathring{\mathcal{B}}_T\subset \mathcal{B}_\lambda$ such that $N_{\mathbb{C}^n/V_{j}}$ has Jordan type given by the subdiagram $\lambda[n-j]\subset \lambda$ consisting of boxes labeled with numbers $\le n-j$. Then the \emph{Springer fiber component corresponding to $T$} is the closure $$\mathcal{B}_T\coloneqq \overline{\mathring{\mathcal{B}}_T}\subset \mathcal{B}_\lambda.$$
\end{defn}
We give a concrete example in Example~\ref{eg:MatchingfromTableaux}.
By \cite{Spa77}, the Springer fiber is equidimensional  and for all $T\in \syt(\lambda)$ we have
$$\dim \mathcal{B}_T=\dim \mathcal{B}_\lambda=\sum \binom{\lambda_i^{\top}}{2}.$$

\begin{rem}
\label{rem:differentconv}
   This follows the indexing conventions of Karp--Precup \cite{karp2025richardsontableauxcomponentsspringer} and Precup--Sabando-Alvarez \cite{sabandoalvarez2026idealsdefiningcomponentstworow}.
   Applying the complementation map $i\mapsto  n+1-i$ to the entries results in standard reverse tableaux that now have decreasing rows and columns, and then the indexing conventions also agree with Spaltenstein \cite{Spa76}, G\"uemes \cite{Gu89}, Fung \cite{Fun03}, and Stroppel--Webster \cite{SW12}.

   There is a genuinely different convention where the Jordan types of $N|_{V_j}$ create the standard filling -- this is the convention used by Steinberg \cite{St88}.
   As shown by van Leeuwen \cite{Vl00} the tableaux indexing the same component under the two conventions are related by the Sch\"utzenberger evacuation involution.
\end{rem}

\subsection{Orbital varieties and Joseph polynomials}
We recall now the correspondence between Springer fibers, and orbital varieties \cite{Jo84,Jo89,KZJ14,RTVZJ12}, following Spaltenstein \cite{Spa77}. Let $\bm{b},\bm{n}$ be the subsets of weakly (resp. strictly) upper triangular $n\times n$ matrices.
For $M$ nilpotent of Jordan type $\lambda$,
by definition we have $\mathcal{B}_\lambda=\{gB\in \GL_n/B\suchthat g^{-1}Mg\in \bm{b}\}$, and as $M$ is nilpotent this is equivalent to
$$\mathcal{B}_\lambda=\{gB\in \GL_n/B\suchthat g^{-1}Mg\in \bm{n}\}.$$
For $\lambda$ a partition, we denote by $\bm{n}_{\lambda}\subset \bm{n}$ for the closure of the subset of matrices of Jordan type $\lambda$.
Then if $\pi:\GL_n\to \GL_n/B$ is the projection map and $\phi:\pi^{-1}(B)\to \bm{n}$ is the map $g\mapsto g^{-1}Mg$, there is a correspondence

\begin{center}
\begin{tikzcd}
   &\pi^{-1}(\mathcal{B}_\lambda)=\{g\in \GL_n\suchthat g^{-1}Mg\in \bm{n}\}\ar[dl,"\pi",swap]\ar[dr,"\phi"]&\\ \mathcal{B}_\lambda&&\bm{n}_{\lambda}
\end{tikzcd}
\end{center}
Then $\bm{n}_{\lambda}$ decomposes into irreducible components $\mathcal{O}_T=\phi(\pi^{-1}\mathcal{B}_T)$, the \emph{orbital varieties} indexed by standard Young tableaux: $$\bm{n}_{\lambda}=\bigcup_{T\in \syt(\lambda)}\mathcal{O}_T.$$

Note that for $gB\in \mathcal{B}_M$, the associated flag $V_i$ is given by $\langle g({\sf  e}_1),\ldots,g({\sf e}_i)\rangle$, and the condition that $gB\in \mathring{\mathcal{B}}_T$ is therefore that $M_{\mathbb{C}^n/\langle g({\sf e}_1),\ldots,g({\sf e}_i)\rangle}$ is of Jordan type $\lambda[n-i]$.
This is equivalent to saying that $(g^{-1}Mg)_{\mathbb{C}^n/\langle {\sf e}_1,\ldots,{\sf e}_i\rangle}$ is of Jordan type $\lambda[n-i]$, so we have the following.
\begin{fact}
    The irreducible component $\mathcal{O}_T\subset \bm{n}_\lambda$ is the closure of those $N\in \bm{n}$ such that $N_{\mathbb{C}^n/\langle \sf{e}_1,\ldots,\sf{e}_i\rangle}$ is of Jordan type $\lambda[n-i]$ for all $i$.
\end{fact}

The variety $\bm{n}_\lambda$ is equidimensional and for all $T\in \syt(\lambda)$ we have \cite[Section 3.1]{Jo84}
$$\dim \mathcal{O}_T=\dim \bm{n}_{\lambda}=\binom{n}{2}-\sum \binom{\lambda_i^{\top}}{2}.$$

There is an action of the diagonal matrices $D$ on $\bm{n}_\lambda$ given by conjugation. Denoting $\sum m_iz_i$ for the character $\operatorname{Diag}(t_1,\ldots,t_n)\mapsto t_1^{m_1}\cdots t_n^{m_n}$, $\bm{n}$ becomes a $D$-representation of dimension $\binom{n}{2}$ with weights $\{z_i-z_j\suchthat 1\le i< j \le n\}$.
\begin{defn}
    The \emph{Joseph polynomial} \cite{Jo84,Jo89,KZJ14,RTVZJ12} $J_T(z_1,\ldots,z_n)$ of $\mathcal{O}_T$ is the $D$-equivariant cohomology class of $\mathcal{O}_T$ in $H^\bullet_D(\bm{n})=H^\bullet_D(pt)=\mathbb{Z}[z_1,\ldots,z_n]$, i.e. the \emph{multidegree} \cite{MiSt05}.
\end{defn}

\section{Specht polynomials and Levi-Richardsons in Springer's representation}
\label{sec:SpringerRepRichardson}

Given a column-strict filling $T$ of the Young diagram of $\lambda$ with distinct entries from $[n]$, its \emph{Specht polynomial} is
\[
  f_T \;=\;  \prod_{a\text{ above $b$ in the same column of }T}(z_a-z_b),
\]
the product of the Vandermonde determinants of the column entries. For example,
$$T=\begin{ytableau}
    2 & 1 & 4\\
    6 & 3 & 5\\
    7&\none&\none\end{ytableau} \quad\text{has}\quad f_T=(z_2-z_6)(z_2-z_7)(z_6-z_7)\cdot (z_1-z_3)\cdot (z_4-z_5).$$
The symmetric group $S_n$ acts on the span of the $f_T$ by permuting the variables; this span is the \emph{Specht module} $V_\lambda$.
We refer the reader to \cite{Sag01} for more on this classical construction of irreducible representations of $S_n$ \cite{Sp35}.

Let $S_{\lambda^{\top}}=\prod S_{\lambda_{i}^{\top}}\subset S_n$ be the parabolic subgroup associated to  $\lambda^{\top}$.
By \cite[Proposition 2.5]{W13}, a Richardson variety is an orbit of the Levi subgroup $\GL_{\lambda^{\top}}=\prod \GL_{\lambda_{i}^{\top}}$ if and only if it is of the form $\mathcal{R}_{u,v}$ with $u,v$ the minimal and maximal length representatives of a coset $\mathcal{C}\in S_{\lambda^{\top}}\setminus S_n$; we write $\mathcal{R}_{\mathcal{C}}\coloneqq \mathcal{R}_{u,v}$ for it.
Because these cosets partition $S_n$, these $\lambda^{\top}$-Levi Richardsons are equivalently characterized as the $\GL_{\lambda^{\top}}$-orbits through permutation matrices, as mentioned in the introduction.

Such cosets are naturally in bijection with ordered set partitions $$\mathcal{P}_1\sqcup \mathcal{P}_2\sqcup \cdots \sqcup \mathcal{P}_{\lambda_1}=[n]$$
where $|\mathcal{P}_i|=\lambda_{i}^{\top}$, by applying the inverse $w^{-1}$ of a coset representative to the elements of the standard partition $\{1,\ldots,\lambda_1^{\top}\},\{\lambda_1^{\top}+1,\ldots,\lambda_1^{\top}+\lambda_2^{\top}\},\ldots $ of $[n]$.
We obtain a column-strict filling $T$ of $\lambda$ by filling the $i$'th column of $\lambda$ with the elements of $\mathcal{P}_i$ listed in increasing order.

\begin{thm}
\label{thm:RCcohomology}
    If $T$ is the column-strict filling associated to $\mathcal{C}\in S_{\lambda^{\top}}\setminus S_n$ as above, then $$[\mathcal{R}_{\mathcal{C}}]=\ct \prod \partial_{\mathcal{P}_i}\text{ and }\mathcal{D}_{\mathcal{R}_{\mathcal{C}}}(z_1,\ldots,z_n)=f_T(z_1,\ldots,z_n)\in H_\bullet(\fl{n}).$$
\end{thm}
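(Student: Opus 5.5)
Since both sides are determined by their values on $H^\bullet(\fl{n})$, I will first prove the formula for the degree map $\int_{\mathcal{R}_{\mathcal{C}}}$ and then read off the degree polynomial from it.

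\textbf{Step 1: the degree map.} I use description (3) of the introduction. Let $\mathcal{P}_1\sqcup\cdots\sqcup\mathcal{P}_{\lambda_1}$ be the ordered set partition attached to $\mathcal{C}$. Then $\mathcal{R}_{\mathcal{C}}$ is the image of the closed embedding $\iota:\prod_i \fl{|\mathcal{P}_i|}\hookrightarrow \fl{n}$, and it is a single $\GL_{\lambda^{\top}}$-orbit through a permutation flag. Pulling back along $\iota$, the tautological line bundles $\mathcal{F}_j/\mathcal{F}_{j-1}$ become the tautological line bundles of the factors, suitably relabeled. Concretely, the Chern root $x_j$ pulls back to the Chern root of the factor $\fl{|\mathcal{P}_i|}$ with $j\in\mathcal{P}_i$, placed in position given by the rank of $j$ within $\mathcal{P}_i$. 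This is the Bergeron--Sottile compatibility of the pattern map with the Borel presentation. I will check it directly: the flag $V_j$ is spanned by permutation-matrix columns, and the Levi acts blockwise, so $V_j/V_{j-1}$ is the appropriate quotient line in the block containing $j$.

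With this in hand, $\int_{\mathcal{R}_{\mathcal{C}}} f=\prod_i\int_{\fl{|\mathcal{P}_i|}}$ applied to $\iota^*f$. By Fact~\ref{fact:flagdel} applied to each factor, $\int_{\fl{r}}=\ct\partial_{w_\circ}$ is the full symmetrizer in its variables. Since the $\partial_{\mathcal{P}_i}$ act on disjoint variable sets, they commute, and so
\[\int_{\mathcal{R}_{\mathcal{C}}}=\ct\prod_i\partial_{\mathcal{P}_i}.\]
Here I use that $\partial_{\mathcal{P}_i}$ is the symmetrizer on $\{x_j\}_{j\in\mathcal{P}_i}$ in increasing order of indices, which matches the relabeling.

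\textbf{Step 2: the degree polynomial.} By definition, the coefficient of $z^a$ in $\mathcal{D}_{\mathcal{R}_{\mathcal{C}}}$ is $\ct\prod_i\partial_{\mathcal{P}_i}x^a$. This factors over the blocks as $\prod_i\ct\partial_{\mathcal{P}_i}\prod_{j\in\mathcal{P}_i}x_j^{a_j}$. By the explicit formula in Fact~\ref{fact:flagdel}, each factor is the coefficient extraction for the Vandermonde $\prod_{a<b\in\mathcal{P}_i}(z_a-z_b)$: the monomial coefficient is $\pm1$ exactly at permutations of the staircase exponent, with sign $(-1)^{\ell(\sigma)}$. Summing over $a$ therefore gives
\[\mathcal{D}_{\mathcal{R}_{\mathcal{C}}}=\prod_i\prod_{a<b\in\mathcal{P}_i}(z_a-z_b).\]
Since the $i$th column of $T$ lists $\mathcal{P}_i$ in increasing order, this is exactly $f_T$. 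In other words, the degree polynomial of a product embedding is the product of the degree polynomials, each being $\mathcal{D}_{\fl{r}}$ in the relabeled variables.

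\textbf{Main obstacle.} The real content is the identification in Step 1 of $\iota^*x_j$ with the correct factor Chern root, including its position and sign. This amounts to making sure that the bijection between cosets and set partitions, defined via $w^{-1}$, matches the pattern map. I would verify it on the torus-fixed flag $w B$ by computing the weights of $V_j/V_{j-1}$. Then I would use $\GL_{\lambda^{\top}}$-equivariance of the tautological bundles to extend the identification across the orbit $\GL_{\lambda^{\top}}wB\cong\prod\GL_{\lambda_i^{\top}}/B_i$.
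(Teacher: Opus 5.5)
Your proposal is correct and follows essentially the same route as the paper: realize $\mathcal{R}_{\mathcal{C}}$ as the Bergeron--Sottile pattern-map image of $\prod_i \fl{\lambda_i^{\top}}$, pull back the Chern roots to get $\int_{\mathcal{R}_{\mathcal{C}}}=\ct\prod_i\partial_{\mathcal{P}_i}$, and apply Fact~\ref{fact:flagdel} factorwise to obtain the product of Vandermondes $f_T$. The paper simply cites the compatibility $\iota^*x_j\leftrightarrow$ (the $r$-th Chern root of the factor containing $j$) from Bergeron--Sottile, whereas your direct blockwise check that $V_j=\bigoplus_i (V_j\cap\mathbb{C}^{B_i})$ makes that step explicit.
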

\begin{proof}
The Richardson variety $\mathcal{R}_{\mathcal{C}}$ is the image
$$\prod \GL_{\lambda_{i}^{\top}}\to \prod \GL_{\lambda_{i}^{\top}}/B_{\lambda_{i}^{\top}}=\prod \fl{\lambda_{i}^{\top}}\hookrightarrow \fl{n}$$
where the last map is the pattern map embedding of Bergeron--Sottile \cite{BS98,BS99} associated to the ordered partition above.
   The pushforward along the pattern map in homology is adjoint to the alphabet coproduct in cohomology, so by the push-pull formula we have
     \begin{align*}\int_{\mathcal{R}_{\mathcal{C}}}g(x_1,\ldots,x_n)=\int_{\prod_i \fl{\lambda_{i}^{\top}}}g(\{x_j\}_{j\in \mathcal{P}_i})=\Big(\prod_i Y_{\mathcal{P}_i}\partial_{\mathcal{P}_i}\Big)g=\ct \prod_i \partial_{\mathcal{P}_i}g,\end{align*}
    where the middle expression denotes the composite of the operators $Y_{\mathcal{P}_i}\partial_{\mathcal{P}_i}$ on disjoint variable sets.
    We conclude by Fact~\ref{fact:flagdel} that $\mathcal{D}_{\mathcal{R}_\mathcal{C}}=\prod_i \mathcal{D}_{\fl{\lambda_i^{\top}}}(\{z_j\}_{j\in\mathcal{P}_i})=f_T$.
\end{proof}
\begin{cor}
In $H_\bullet(\fl{n})$ the $\lambda^{\top}$-Levi Richardson classes span a submodule of $H_\bullet(\fl{n})$ isomorphic to the Specht module $V_{\lambda}$:
    $$V_\lambda\cong \mathbb{Z}\{[\mathcal{R}_{\mathcal{C}}]\suchthat \mathcal{C}\in S_{\lambda^{\top}}\setminus S_n\}\subset H_\bullet(\fl{n}).$$
\end{cor}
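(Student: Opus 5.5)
The plan is to deduce the corollary from Theorem~\ref{thm:RCcohomology} together with the injectivity of the degree-polynomial map. Sending $[X]\mapsto\mathcal{D}_X$ gives an injective $\mathbb{Z}$-linear map $H_\bullet(\fl{n})\hookrightarrow\mathbb{Z}[z_1,\ldots,z_n]$. This follows from the perfect pairing with $H^\bullet(\fl{n})$ and the duality $\langle\mathcal{D}_w,\schub{w'}\rangle=\delta_{w,w'}$: a class is determined by its pairings with the Schubert polynomials, and these are read off from its degree polynomial.

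The first step is to check that this map is $S_n$-equivariant, where $S_n$ acts on $H_\bullet(\fl{n})$ by the Springer action and on $\mathbb{Z}[z_1,\ldots,z_n]$ by permuting variables. The paper takes this as the definition of the Springer action in the introduction, so I will simply use it. If one instead starts from a geometric definition of the Springer action, equivariance needs a separate argument; I would then appeal to the BGG description of the harmonic image, cited in remark (a), as the standard identification.

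Next I apply the map to the span. By Theorem~\ref{thm:RCcohomology}, the image of $\mathbb{Z}\{[\mathcal{R}_{\mathcal{C}}]\}$ is $\mathbb{Z}\{f_{T(\mathcal{C})}\}$. As $\mathcal{C}$ ranges over $S_{\lambda^{\top}}\setminus S_n$, the filling $T(\mathcal{C})$ ranges over all column-strict fillings of $\lambda$ with distinct entries in $[n]$. So the image is exactly the integral span of all Specht polynomials $f_T$, which is the integral Specht module $V_\lambda$. This span is closed under permuting variables, since $\sigma\cdot f_T=\pm f_{T'}$ where $T'$ is obtained by relabelling the entries of $T$ and re-sorting each column. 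By injectivity and equivariance, $\mathbb{Z}\{[\mathcal{R}_{\mathcal{C}}]\}$ is therefore an $S_n$-submodule of $H_\bullet(\fl{n})$, and the map restricts to an isomorphism onto $V_\lambda$.

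The only delicate point is the compatibility of the Springer action with permutation of the $z_i$, that is, the equivariance in the first step. Everything else is formal given Theorem~\ref{thm:RCcohomology} and injectivity. It is also worth noting that the bijection between cosets and column-strict fillings makes the classes $[\mathcal{R}_{\mathcal{C}}]$ a spanning set, not a basis, of $V_\lambda$.
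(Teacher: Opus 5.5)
Your proposal is correct and matches the paper, which states this corollary without separate proof as an immediate consequence of Theorem~\ref{thm:RCcohomology}. Since the Springer action on the injective degree-polynomial image is by definition permutation of the $z_i$, the span of the $[\mathcal{R}_{\mathcal{C}}]$ maps isomorphically and equivariantly onto the integral span of the Specht polynomials $f_T$, which is $V_\lambda$.
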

We will see shortly that the Springer representation is exactly this submodule of $H_\bullet(\fl{n})$.
\subsection{The geometry of Levi-Richardsons}
\label{sec:degeneration}
Martha Precup \cite{MarthaPriv} communicated to us the following remarkable degeneration. We present this using companion matrices on the suggestion and guidance of Ruizhen Liu.
Recall that in the introduction we have defined for any $n\times n$ matrix $M$ the subvariety $\mathcal{B}_M\subset \fl{n}$ consisting of flags preserved by $M$.
If we have a $1$-parameter family of matrices $M(t)$ we have a (not necessarily flat) degeneration $\mathcal{B}_{M(t)}\to \mathcal{B}_{M(0)}$.
Now fix distinct $a_1,\ldots,a_{\lambda_1}\in \mathbb{C}$ and take the block matrix $M(t)=M_1(t)\oplus M_2(t)\oplus \cdots \oplus M_{\lambda_1^{\top}}(t)$ with blocks the companion matrices (with $e_i$ the $i$'th elementary symmetric polynomial) $$M_i(t)\coloneqq \begin{bmatrix}0&0&0&\cdots &0&-e_{\lambda_i}(-ta_1,\ldots,-ta_{\lambda_i})\\1&0&0&\cdots&0&-e_{\lambda_i-1}(-ta_1,\ldots,-ta_{\lambda_i})\\
0&1&0&\cdots &0&-e_{\lambda_i-2}(-ta_1,\ldots,-ta_{\lambda_i})\\
\vdots&\vdots&\vdots&\cdots &\vdots &\vdots\\
0&0&0&\cdots & 1&-e_{1}(-ta_1,\ldots,-ta_{\lambda_i})\end{bmatrix}.$$
By definition this is the companion matrix whose characteristic polynomial is $(x-ta_1)\cdots(x-ta_{\lambda_i})$.
Then we have the observations
\begin{enumerate}
    \item For $t\ne 0$ the matrix $M_i(t)$ is diagonalizable with eigenvalues $ta_1,\ldots,ta_{\lambda_i}$.
    Therefore $M(t)$ is diagonalizable  with $ta_i$ of multiplicity $\lambda_{i}^{\top}$.
    \item $M(0)$ is a nilpotent matrix of Jordan type $\lambda$.
\end{enumerate}
For the $n\times n$ diagonal matrix $D_{\lambda^{\top}}=\operatorname{Diag}(a_1,\ldots,a_1,a_2,\ldots,a_2,\ldots)$ where $a_i$ appears $\lambda_i^{\top}$ times for $1\le i \le \lambda_1$, if $g(t)\in \GL_n$ is the matrix such that $M(t)=g(t)tD_{\lambda^{\top}}g(t)^{-1}$ then  $$\mathcal{B}_{M(t)}=g(t)\mathcal{B}_{tD_{\lambda^{\top}}}=g(t)\mathcal{B}_{D_{\lambda^{\top}}}=\bigcup \{g(t)\mathcal{R}_{\mathcal{C}}\suchthat \mathcal{C}\in S_{\lambda^{\top}}\setminus S_n\}.$$ For $t\ne 0$ this is an isotrivial family degenerating to $\mathcal{B}_\lambda$. Because the special fiber and the general fiber of $\mathcal{B}_{M(t)}$ both have dimension $\sum \binom{\lambda_{i}^{\top}}{2}$, this implies the following.

\begin{cor}
For $\mathcal{C}\in S_{\lambda^{\top}}\setminus S_n$ we have
\begin{equation}
\label{eqn:RCtoBT}[\mathcal{R}_{\mathcal{C}}]=\sum_{T\in \syt(\lambda)} g^{\mathcal{C}}_T [\mathcal{B}_T]\text{ with }g^{\mathcal{C}}_T\ge 0.
\end{equation}
\end{cor}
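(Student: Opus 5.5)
We will argue by specialization of algebraic cycles in a one-parameter family. Consider the total space
$$\mathcal{Z}\coloneqq \{(t,\mathcal{F})\in \mathbb{A}^1\times \fl{n}\suchthat M(t)\mathcal{F}_i\subset \mathcal{F}_i\text{ for all }i\},$$
and let $\mathcal{Z}^\circ$ be its restriction to $t\neq 0$. By the discussion preceding the statement, for $t\ne 0$ the fiber is $g(t)\mathcal{B}_{D_{\lambda^{\top}}}=\bigsqcup_{\mathcal{C}} g(t)\mathcal{R}_{\mathcal{C}}$. Because $\GL_n$ is connected, each translate $g(t)\mathcal{R}_{\mathcal{C}}$ is homologous to $\mathcal{R}_{\mathcal{C}}$ in $\fl{n}$. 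So it suffices to show that, for each fixed $\mathcal{C}$, the family $t\mapsto g(t)\mathcal{R}_{\mathcal{C}}$ specializes at $t=0$ to an effective cycle supported on $\mathcal{B}_\lambda$ and of dimension $\sum\binom{\lambda_i^{\top}}{2}$.

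The first step is to choose $g(t)$ algebraically. The eigenvectors of the companion matrix $M_i(t)$ are Vandermonde-type vectors $(1,ta_j,(ta_j)^2,\ldots)$ up to reordering and sign, so $g(t)$ can be taken to be a regular function of $t$ on $\mathbb{C}^\times$. Then
$$\mathcal{Z}_{\mathcal{C}}^\circ\coloneqq \{(t,g(t)\mathcal{F})\suchthat t\ne0,\ \mathcal{F}\in\mathcal{R}_{\mathcal{C}}\}\subset \mathcal{Z}^\circ$$
is an irreducible closed subvariety isomorphic to $\mathbb{C}^\times\times \mathcal{R}_{\mathcal{C}}$. Let $\overline{\mathcal{Z}_{\mathcal{C}}}$ be its closure in $\mathbb{A}^1\times\fl{n}$. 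This closure is irreducible, flat over $\mathbb{A}^1$ because it is torsion-free over a DVR, and of relative dimension $d\coloneqq\sum\binom{\lambda_i^{\top}}{2}$.

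The second step uses the standard specialization principle (Fulton, Intersection Theory, \S10.1): the fiber cycle $[\overline{\mathcal{Z}_{\mathcal{C}}}|_{t=0}]$ is rationally, hence homologically, equivalent to $[\overline{\mathcal{Z}_{\mathcal{C}}}|_{t=1}]=[g(1)\mathcal{R}_{\mathcal{C}}]=[\mathcal{R}_{\mathcal{C}}]$. The special fiber is a pure $d$-dimensional effective cycle, so it is a sum of its $d$-dimensional components with positive multiplicities. Since $\mathcal{Z}$ is closed, the special fiber lies in $\mathcal{Z}|_{t=0}=\mathcal{B}_{M(0)}$. Now $M(0)$ is nilpotent of type $\lambda$, so $\mathcal{B}_{M(0)}$ is a $\GL_n$-translate of $\mathcal{B}_\lambda$, and its irreducible components are translates of the $\mathcal{B}_T$. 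These have dimension exactly $d$ by Spaltenstein's equidimensionality result. Every $d$-dimensional irreducible subvariety of a $d$-dimensional equidimensional variety is one of its components. Hence the special fiber equals $\sum_T g^{\mathcal{C}}_T[h\mathcal{B}_T]$ with $g^{\mathcal{C}}_T\ge0$. Translating by $h\in\GL_n$ again does not change homology classes, which gives \eqref{eqn:RCtoBT}.

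The main point needing care is the algebraicity of $g(t)$, which guarantees that $\overline{\mathcal{Z}_{\mathcal{C}}}$ is a genuine flat family over a curve. If the eigenvector normalization only works after a finite base change $t=s^m$, we pass to that cover; specialization still applies and the conclusion is unchanged. A second point is that the components of $\overline{\mathcal{Z}_{\mathcal{C}}}|_{t=0}$ might overlap for different $\mathcal{C}$. This is harmless, because the argument is carried out separately for each $\mathcal{C}$ and only needs the special fiber to lie in $\mathcal{B}_{M(0)}$ together with the dimension count.
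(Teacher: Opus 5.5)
Your proposal is correct and is essentially the paper's argument. The paper only observes that $\mathcal{B}_{M(t)}$ is isotrivial for $t\neq 0$ and that the general and special fibers both have dimension $\sum\binom{\lambda_i^{\top}}{2}$, so each limit of $g(t)\mathcal{R}_{\mathcal{C}}$ is an effective combination of components of $\mathcal{B}_{M(0)}$; you have supplied the flat-closure and specialization details behind that sentence.

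One small slip does not affect anything. The right eigenvectors of $M_i(t)$ are the coefficient vectors of $\prod_{k\neq j}(x-ta_k)$, not Vandermonde vectors (those are eigenvectors of the transpose). These are still polynomial in $t$ and linearly independent for $t\neq 0$, so $g(t)$ is regular on $\mathbb{C}^\times$ and no base change is needed.
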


One can furthermore show that over $\mathbb{Q}$ the span of  $\{[\mathcal{R}_{\mathcal{C}}]\suchthat \mathcal{C}\in S_{\lambda^{\top}}\setminus S_n\}$ is the Springer representation \cite{MarthaPriv}. As we will see next, by combinatorial means we can show that the $\mathbb{Z}$-span of the $[\mathcal{R}_{\mathcal{C}}]$ is the Springer representation.

\subsection{The combinatorics of Levi-Richardsons}
Recall the quotient ring presentation of $H^{\bullet}(\mathcal{B}_{\lambda})$ due to Tanisaki \cite{Tan82} following De Concini--Procesi \cite{DCP81}.
Given $S\subset [n]$ let $e_d(\{x_j\}_{j\in S})$ denote the sum of degree $d$ squarefree monomials in the variables $\{x_j\}_{j\in S}$. Now define
\[
    \mathcal{I}_{\lambda}=\langle e_d(\{x_j\}_{j\in S}) \suchthat S\subset [n], d>|S|-\lambda_n^{\top}-\cdots - \lambda_{n-|S|+1}^{\top} \rangle,
\]
where we set $\lambda_j^{\top}=0$ for $j>\lambda_1$.
Then we have
\[
    H^\bullet(\mathcal{B}_\lambda)=\mathbb{Z}[x_1,\ldots,x_n]/\mathcal{I}_\lambda.
\]
Although Tanisaki \cite{Tan82} and De Concini--Procesi \cite{DCP81} state this presentation over a field of characteristic zero, it holds over $\mathbb{Z}$: the Springer fiber $\mathcal{B}_\lambda$ admits an affine paving (by Spaltenstein \cite{Spa76,Spa77}), so $H^\bullet(\mathcal{B}_\lambda)$ is a free $\mathbb{Z}$-module and the Tanisaki relations hold integrally -- see \cite[Theorem 4.1]{AH16} and the subsequent remark in \textit{ibid}.
\begin{thm}
\label{thm:Zupgrade}
Both the Springer representation and the span of the Levi-Richardsons associated to $\lambda^{\top}$ are equal $S_n$ sub-representations of  $H_\bullet(\fl{n})$, i.e.
$$V_\lambda\cong \bigoplus_{T\in \syt(\lambda)}\mathbb{Z}[\mathcal{B}_T]= \mathbb{Z}\{[\mathcal{R}_{\mathcal{C}}]\suchthat \mathcal{C}\in S_{\lambda^{\top}}\setminus S_n\}\subset H_\bullet(\fl{n}).$$
\end{thm}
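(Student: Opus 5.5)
We identify both lattices inside $H_{d}(\fl{n})$, where $d=\sum\binom{\lambda_i^{\top}}{2}$, as saturated rank-$f^\lambda$ sublattices, and then show one contains the other. By the Corollary following Theorem~\ref{thm:RCcohomology}, the $\mathbb{Z}$-span of the $[\mathcal{R}_{\mathcal{C}}]$ corresponds, under the injection $H_\bullet(\fl{n})\hookrightarrow\mathbb{Z}[z_1,\ldots,z_n]$, to the $\mathbb{Z}$-span of the Specht polynomials $f_T$. This span is the integral Specht module; it has $\mathbb{Z}$-basis $\{f_T\}_{T\in\syt(\lambda)}$ and rank $f^\lambda=|\syt(\lambda)|$.

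The Springer side is handled as follows. The classes $[\mathcal{B}_T]$ are the pushforward of $H_d(\mathcal{B}_\lambda)=\bigoplus\mathbb{Z}[\mathcal{B}_T]$ under the inclusion $\iota:\mathcal{B}_\lambda\hookrightarrow\fl{n}$. Because $\mathcal{B}_\lambda$ has an affine paving, $H^\bullet(\mathcal{B}_\lambda)$ is free. The integral Tanisaki presentation $H^\bullet(\mathcal{B}_\lambda)=\mathbb{Z}[x]/\mathcal{I}_\lambda$ says that $\iota^*$ is surjective, with kernel the image of $\mathcal{I}_\lambda$. Dually, $\iota_*$ is injective on homology and its image is a saturated sublattice, namely the annihilator of $\mathcal{I}_\lambda$ in degree $d$:
$$\iota_*H_d(\mathcal{B}_\lambda)=\{\phi\in H_d(\fl{n})\suchthat \phi(\mathcal{I}_\lambda\cap\mathbb{Z}[x]_d)=0\}.$$
In degree-polynomial language, this is the set of $\mathcal{D}\in\mathbb{Z}[z]_d$ in the image of $H_\bullet(\fl{n})$ that pair to zero with $\mathcal{I}_\lambda$. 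This lattice has rank $f^\lambda$.

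The main step is to show that each $f_T$ lies in this annihilator, that is, $\ct\prod_i\partial_{\mathcal{P}_i}$ kills every generator $g\,e_m(\{x_j\}_{j\in S})$ of $\mathcal{I}_\lambda$ with $m>|S|-\lambda_n^{\top}-\cdots-\lambda_{n-|S|+1}^{\top}$. This is where I expect the real work. Write $e_m(x_S)=\sum\prod_i e_{m_i}(x_{S\cap\mathcal{P}_i})$ with $\sum m_i=m$. The operators act on disjoint variable sets, and $\int_{\mathcal{R}_\mathcal{C}}$ factors as $\prod_i Y_{\mathcal{P}_i}\partial_{\mathcal{P}_i}$. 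So it suffices to show that in every term some factor $e_{m_i}(x_{S\cap\mathcal{P}_i})$ is killed by $\fl{|\mathcal{P}_i|}$ after multiplying by anything; this holds exactly when $m_i>|\mathcal{P}_i|-|\mathcal{P}_i\setminus S|=|S\cap\mathcal{P}_i|$. To see this, note that in $H^\bullet(\fl{r})$, for $A\subseteq[r]$, $e_{m}(x_A)=0$ once $m>|A|$. This is trivial, since the polynomial is then identically zero. So we need the stronger fact $e_m(x_A)=0$ when $m>r-|A^c|$. This follows because, modulo symmetric functions, $e(x_A)\,e(x_{A^c})=1$ as generating functions, so $e(x_A)=h(-x_{A^c})$-type inversion gives $e_m(x_A)=\pm h_m(x_{A^c})$. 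In turn, $h_m(x_{B})=0$ in $H^\bullet(\fl{r})$ for $m>r-|B|$ by the standard Grassmannian/flag relation; concretely, this also follows from \eqref{eqn:pullsymthrough} applied iteratively.

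The remaining point is a pigeonhole count. If every $m_i\le |S\cap\mathcal{P}_i|-\max(0,|S\cap\mathcal{P}_i|-\ldots)$ failed to exceed the bound, then we would have $m\le\sum_i\min(|S\cap\mathcal{P}_i|,\,|\mathcal{P}_i|-|\mathcal{P}_i\setminus S|)$. Since $|\mathcal{P}_i|=\lambda_i^{\top}$ and $\sum_i|\mathcal{P}_i\setminus S|=n-|S|$, we then need $\sum_i \min(\lambda_i^\top,\cdot)$-type bounds that are at most $|S|-\lambda^{\top}_n-\cdots-\lambda^\top_{n-|S|+1}$. This reduces to the elementary inequality that $\sum_i\max(0,\lambda_i^\top-c_i)$ with $\sum c_i=n-|S|$ is at least the sum of the smallest parts. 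I would verify this carefully, rather than rely on the citation \cite[Lemma 11]{RYZ20}.

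Given containment of the Specht lattice in the saturated Springer lattice, with both of rank $f^\lambda$, equality will follow once the Specht lattice is also saturated in $H_d(\fl{n})$. For this, I would use unitriangularity: $f_T$ for $T\in\syt(\lambda)$ has a leading monomial with coefficient $\pm1$, distinct across $T$, in a suitable term order. By the pairing of Fact~\ref{fact:flagdel}, we can exhibit cohomology classes (monomials) pairing unitriangularly with the $f_T$, which gives saturation. $S_n$-equivariance is automatic, since both sides are images of $S_n$-stable constructions: the Springer action on $H_\bullet(\mathcal{B}_\lambda)$ is compatible with $\iota_*$ via the degree-polynomial model.
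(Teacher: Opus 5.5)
\textbf{Verdict: genuine gap in the containment step.} Your overall architecture is the paper's: saturation of the Specht lattice, plus containment of each $[\mathcal{R}_{\mathcal{C}}]$ in the annihilator of $\mathcal{I}_\lambda$ via the alphabet coproduct and a pigeonhole count. Your leading-monomial argument for saturation is a legitimate substitute for the paper's mod-$p$ remark. It works with lex order $z_n>\cdots>z_1$, where the leading exponent of $z_a$ in $f_T$ is the row index of $a$ minus one. With $z_1>\cdots>z_n$ the leading monomials collide, e.g.\ for $\lambda=(3,1)$.

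The gap is that your vanishing criterion is wrong. Since $|\mathcal{P}_i|-|\mathcal{P}_i\setminus S|=|S\cap\mathcal{P}_i|$ and $r-|A^c|=|A|$, your ``stronger fact'' is literally the trivial one. So your criterion only applies to factors $e_{m_i}(x_{S\cap\mathcal{P}_i})$ that are identically zero and never occur in the expansion. Accordingly your bound comes out as $m\le\sum_i|S\cap\mathcal{P}_i|=|S|$, which does not contradict $m>|S|-\lambda_n^\top-\cdots-\lambda^\top_{n-|S|+1}$. Your proposed inequality with $\sum_i\max(0,\lambda_i^\top-c_i)$ is likewise just $|S|$ and is irrelevant. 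In fact no nontrivial vanishing of the kind you describe exists. For $A\subsetneq[r]$ and $1\le m\le |A|$, the class $e_m(x_A)=(-1)^m h_m(x_{A^c})$ is nonzero in $H^\bullet(\fl{r})$, being an $S_r$-translate of the Grassmannian Schubert class $h_m(x_1,\ldots,x_{r-|A|})$.

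The missing idea is the one remaining case, $A^c=\emptyset$, where $h_m$ of the empty alphabet vanishes for $m\ge 1$. The factors that die are exactly those with $\mathcal{P}_i\subseteq S$ and $m_i\ge 1$. Then $e_{m_i}(x_{\mathcal{P}_i})$ is symmetric in the whole block with no constant term, so $Y_{\mathcal{P}_i}\partial_{\mathcal{P}_i}$ kills it after multiplication by anything, by \eqref{eqn:pullsymthrough}.

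The correct pigeonhole, which is the paper's, runs as follows:
\begin{itemize}
\item A term with no such factor has $m_i=0$ on every block contained in $S$, and $m_i\le|S\cap\mathcal{P}_i|$ on the others. Hence $m\le |S|-\sum_{\mathcal{P}_i\subseteq S}\lambda_i^\top$.
\item Every block not contained in $S$ meets $[n]\setminus S$, so at most $n-|S|$ blocks fail to lie in $S$. Thus at least $\lambda_1-(n-|S|)$ blocks lie inside $S$.
\item Those blocks have total size at least $\sum_{j=n-|S|+1}^{\lambda_1}\lambda_j^\top$, the sum of the smallest parts of $\lambda^\top$.
\item This gives $m\le |S|-\sum_{j=n-|S|+1}^{n}\lambda^\top_j$, violating the Tanisaki degree bound.
\end{itemize}
With this replacement the rest of your proposal goes through.
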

\begin{proof}
Both sides are isomorphic to the Specht module $V_\lambda$, and the Specht polynomials form a generating set for the Specht module $V_\lambda$ over $\mathbb{F}_p$ for all $p$, so by Theorem~\ref{thm:RCcohomology} the span of the $[\mathcal{R}_{\mathcal{C}}]$ is a saturated subset of $H_\bullet(\fl{n})$ (i.e. $kv$ lies in the span implies $v$ lies in the span). It therefore suffices to show each $[\mathcal{R}_{\mathcal{C}}]$ lies in the span of the Springer components. The Springer fiber representation in homology comes from the top dimension of the inclusion
$$H_{\bullet}(\mathcal{B}_\lambda)\hookrightarrow H_\bullet(\fl{n}),$$
which is dual to the surjection
$$\mathbb{Z}[x_1,\ldots,x_n]/\langle f-\ct f\suchthat f\text{ is symmetric}\rangle=H^\bullet(\fl{n})\to H^\bullet(\mathcal{B}_\lambda)=\mathbb{Z}[x_1,\ldots,x_n]/\mathcal{I}_\lambda.$$
Therefore a homology class of $H_\bullet(\fl{n})$ lies in $H_\bullet(\mathcal{B}_\lambda)$ if and only if it annihilates $\mathcal{I}_\lambda$, and we must show that $[\mathcal{R}_{\mathcal{C}}]$ has this property.
That $f_T$ has this property when considered as a functional on the symmetric coinvariants also follows from \cite[Lemma 11]{RYZ20}, which in our language describes a full spanning set for $H_\bullet(\mathcal{B}_\lambda)\subset \mathbb{Z}[z_1,\ldots,z_n]$. We include a short proof for $[\mathcal{R}_{\mathcal{C}}]$ itself for completeness.

By Theorem~\ref{thm:RCcohomology} we know that $[\mathcal{R}_{\mathcal{C}}]=\prod Y_{\mathcal{P}_i}\partial_{\mathcal{P}_i}$. Expanding a Tanisaki generator by the alphabet coproduct associated to the partition $\mathcal{P}$, we obtain
$$e_d(\{x_i\}_{i\in S})=\sum_{\substack{a_1+\cdots+a_{\lambda_1}=d\\a_b\le |S\cap \mathcal{P}_b|}}\prod_{b=1}^{\lambda_1} e_{a_b}(\{x_i\}_{i\in S\cap \mathcal{P}_b}).$$
By \eqref{eqn:pullsymthrough} it suffices to show that the bounds on $d$ ensure that each term in the coproduct contains at least one factor of the form $e_{a_b}(\{x_i\}_{i\in S\cap \mathcal{P}_b})$ with $a_b\ge 1$ and $S\cap \mathcal{P}_b=\mathcal{P}_b$. Indeed, suppose that this never happens. If $|S|\le n-\lambda_1$, then $d>|S|$ and $e_d(\{x_j\}_{j\in S})=0$, so assume we can write $|S|=n-\lambda_1+k$ with $k\ge 0$.
Then by the pigeonhole principle there are $M\ge k$ of the $\lambda_1$ parts $\mathcal{P}_b$ completely contained in $S$ (so $a_b=0$), and the remaining $\lambda_1-M$ parts have $a_b\le |S\cap \mathcal{P}_b|\le |\mathcal{P}_b|-1$.
Therefore $$d=\sum a_i\le n-(\lambda_1-M)-\sum_{j=\lambda_1-M+1}^{\lambda_1}\lambda_j^{\top}\le n-(\lambda_1-k)-\sum_{j=\lambda_1-k+1}^{\lambda_1}\lambda_j^{\top}=|S|-\sum_{j=n-|S|+1}^n\lambda_j^{\top},$$
violating the degree bound between $d$ and $|S|$.
\end{proof}
\begin{cor}
Under the degree polynomial embedding
    $H_\bullet(\fl{n})\hookrightarrow \mathbb{Z}[z_1,\ldots,z_n]$, the Springer representation is the Specht polynomial representation spanned by the Specht polynomials $f_T$ for $T$ a column-strict filling of $\lambda$, which in turn are the degree polynomials $\mathcal{D}_{\mathcal{R}_{\mathcal{C}}}(z_1,\ldots,z_n)$ of $\lambda^{\top}$-Levi Richardsons $\mathcal{R}_{\mathcal{C}}$ for $\mathcal{C}\in S_{\lambda^{\top}}\setminus S_n$.
\end{cor}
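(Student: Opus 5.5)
This corollary is a repackaging of Theorem~\ref{thm:Zupgrade} and Theorem~\ref{thm:RCcohomology}, transported along the degree polynomial embedding. The plan is to apply the injective $\mathbb{Z}$-linear map $[X]\mapsto \mathcal{D}_X(z_1,\ldots,z_n)$ from $H_\bullet(\fl{n})$ to $\mathbb{Z}[z_1,\ldots,z_n]$. Its injectivity follows from the perfect pairing $H_\bullet\otimes H^\bullet\to\mathbb{Z}$, since $\mathcal{D}_X$ records $\int_X$ on all monomials and hence on all of $H^\bullet(\fl{n})$.

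The first point to check is that this map is $S_n$-equivariant. This holds by definition in the introduction, where the Springer action on $H_\bullet(\fl{n})$ is defined as the action of $S_n$ permuting the $z_i$. Equivariance means the image of an $S_n$-submodule is the corresponding $S_n$-submodule of polynomials.

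The argument then runs in three steps. By Theorem~\ref{thm:Zupgrade}, the Springer representation $\bigoplus_{T\in\syt(\lambda)}\mathbb{Z}[\mathcal{B}_T]$ equals the $\mathbb{Z}$-span of $\{[\mathcal{R}_{\mathcal{C}}]\suchthat \mathcal{C}\in S_{\lambda^{\top}}\setminus S_n\}$ inside $H_\bullet(\fl{n})$. By Theorem~\ref{thm:RCcohomology}, the image of $[\mathcal{R}_{\mathcal{C}}]$ is $f_{T(\mathcal{C})}$, where $T(\mathcal{C})$ is the column-strict filling built from the ordered set partition $\mathcal{P}$. It remains to see that every column-strict filling $T$ of $\lambda$ with distinct entries in $[n]$ arises as some $T(\mathcal{C})$.

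For that last step, note that $T(\mathcal{C})$ has increasing columns by construction. A general column-strict filling $T'$ has the same column sets as some column-increasing $T$. Reordering a column changes the Vandermonde factor only by a sign, so $f_{T'}=\pm f_T$. Both the set of $f_{T'}$ and the set of $f_{T(\mathcal{C})}$ therefore have the same $\mathbb{Z}$-span, and it is $S_n$-stable.

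Putting these together, the image of the Springer representation is exactly the $\mathbb{Z}$-span of the Specht polynomials, with each Specht polynomial being $\pm\mathcal{D}_{\mathcal{R}_{\mathcal{C}}}$ (exactly equal for column-increasing fillings).

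No step poses a real obstacle. The only care needed is the sign and convention bookkeeping between arbitrary column-strict fillings and the increasing fillings $T(\mathcal{C})$, and recalling that the bijection $\mathcal{C}\mapsto\mathcal{P}$ covers all ordered set partitions with part sizes $\lambda^{\top}$.
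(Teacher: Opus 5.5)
Your proposal is correct and matches the paper, which states this corollary without separate proof as an immediate combination of Theorem~\ref{thm:Zupgrade} (the Springer representation equals the $\mathbb{Z}$-span of the $[\mathcal{R}_{\mathcal{C}}]$) and Theorem~\ref{thm:RCcohomology} ($\mathcal{D}_{\mathcal{R}_{\mathcal{C}}}=f_{T(\mathcal{C})}$), transported along the injective, $S_n$-equivariant degree polynomial map. Your sign bookkeeping is harmless but not needed: in the paper ``column-strict'' means strictly increasing down columns, so $\mathcal{C}\mapsto T(\mathcal{C})$ is already a bijection onto column-strict fillings, as stated in Theorem~\ref{thm:introSpecht}(2).
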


\section{Joseph polynomials and the Springer basis}
\label{sec:Joseph}

In this section, we show that under the embedding $H_\bullet(\fl{n})\subset \mathbb{Z}[z_1,\ldots,z_n]$ we have chosen, that the Joseph polynomials $J_T(z_1,\ldots,z_n)$ associated to orbital varieties $\{\mathcal{O}_T\suchthat T\in \syt(\lambda)\}$ agree with the Springer basis $[\mathcal{B}_T]$ of homology classes of Springer fiber components.
\begin{thm}
\label{thm:JTBT}
For $T\in \syt(\lambda)$ we have $\mathcal{D}_{\mathcal{B}_T}(z_1,\ldots,z_n)=J_T(z_1,\ldots,z_n)$.
\end{thm}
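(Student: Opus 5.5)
We prove Theorem~\ref{thm:JTBT} by passing through the correspondence $\mathcal{B}_\lambda\xleftarrow{\pi}\pi^{-1}(\mathcal{B}_\lambda)\xrightarrow{\phi}\bm{n}_\lambda$ and comparing both sides with a single equivariant computation in $\bm{n}$. We work $T$-equivariantly, with $T=D$ the diagonal torus, inside the total space of the vector bundle $E\coloneqq \GL_n\times_B \bm{n}\to\fl{n}$. This bundle is the cotangent bundle, i.e.\ the Springer resolution setup. Its fibre over $gB$ is $\{X\mid g^{-1}Xg\in\bm{n}\}$. Equivalently, it is the bundle whose fibre at a flag $V_\bullet$ consists of matrices $X$ with $XV_i\subset V_{i-1}$.

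The first step is to identify the Joseph polynomial via the Chern roots. The standard description of multidegrees of $B$-stable subvarieties of $\bm{n}$ is as follows. For a $B$-stable closed $\mathcal{O}\subset\bm{n}$, the subvariety $\GL_n\times_B\mathcal{O}\subset E$ has an equivariant class in $H^\bullet_{\GL_n}(E)=H^\bullet_B(pt)$. Under the identification of $H^\bullet_{B}(pt)=\mathbb{Z}[z_1,\ldots,z_n]$ with the torus weights, this class restricts to the multidegree $J_{\mathcal{O}}$. Non-equivariantly over $\fl{n}$, the weight $z_i$ corresponds to the Chern root $\pm x_i$ of $\mathcal{F}_i/\mathcal{F}_{i-1}$. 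Hence $[\GL_n\times_B\mathcal{O}_T]\in H^\bullet(E)=H^\bullet(\fl{n})$ is $J_T(x_1,\ldots,x_n)$, with the sign fixed by our convention $x_i=-c_1$. This gives the Joseph polynomial as a cohomology class on $\fl{n}$ of degree $\binom n2-\dim\mathcal{B}_\lambda$, read through the Borel presentation.

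The second step brings in the fixed matrix $M$. Consider the section $s_M$ of the bundle $\GL_n\times_B\mathfrak{gl}_n\to\fl{n}$ given by $gB\mapsto g^{-1}Mg$. Then $\mathcal{B}_T$ is the preimage under $s_M$ of $\GL_n\times_B\mathcal{O}_T$, by Spaltenstein's correspondence and the Fact above. The map $\phi$ has fibres that are $B$-torsors over $\mathcal{O}_T\cap(\text{orbit of }M)$, and the orbit of $M$ meets $\mathcal{O}_T$ in a dense open set, so dimensions match. The expected codimension is attained: $\operatorname{codim}\mathcal{B}_T=\binom n2-\dim\mathcal{B}_\lambda$. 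I would argue transversality generically on $\mathring{\mathcal{B}}_T$, using the fact that the $\GL_n$-orbit map is smooth onto the orbit. The excess intersection formula then gives the Poincaré dual $[\mathcal{B}_T]^{PD}=J_T(x)\cdot[\text{something of degree }0]$, up to the complementary factor coming from $\mathfrak{gl}_n/\bm{n}$ directions. More cleanly, I would replace $\bm{n}$ by the orbit closure $\overline{\GL_n M}\cap\bm{n}$ and pull back the class of $\mathcal{O}_T\subset\bm{n}_\lambda$.

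I expect this middle step to be the main obstacle: producing a class on $\fl{n}$ whose pushforward pairing recovers $J_T$, with correct normalization and sign. An alternative I would try first is to avoid Poincaré duality and compute $\int_{\mathcal{B}_T}x^a$ directly. Localize with respect to $D$ on $\bm{n}$ via the multidegree formula $\int_{\mathcal{B}_T}g(x)=\langle J_T(z),g\rangle$. This should follow from the Rossmann/Hotta formula identifying the degree functional of $\mathcal{B}_T$ with the Fourier transform (Joseph's characteristic polynomial) of $\mathcal{O}_T$, a statement of exactly the form \eqref{eqn:generating}. Finally, I would fix the overall scalar and sign by checking a single case, $\lambda=(1^n)$, where $\mathcal{B}_T=\fl{n}$, $\mathcal{O}_T=\{0\}$ and $J_T=\prod_{i<j}(z_i-z_j)=\mathcal{D}_{\fl{n}}$ by Fact~\ref{fact:flagdel}. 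I would also use Hotta's $S_n$-equivariance of $T\mapsto J_T$, together with Theorem~\ref{thm:Zupgrade}, to propagate the normalization consistently across all $T$.
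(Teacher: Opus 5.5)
\textbf{There is a genuine gap: your normalization check at $\lambda=(1^n)$ only fixes the scalar for that one shape.} Your geometric route also does not yield a proof.

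\textbf{The geometric route (your first two steps).} Parts of it are mis-stated.
\begin{itemize}
\item The Joseph polynomial has degree $\operatorname{codim}_{\bm{n}}\mathcal{O}_T=\sum\binom{\lambda_i^{\top}}{2}=\dim\mathcal{B}_T$, not $\binom n2-\dim\mathcal{B}_\lambda$. So $J_T(x_1,\ldots,x_n)$ cannot be the Poincar\'e dual class of $\mathcal{B}_T$.
\item The theorem identifies $J_T$ with the degree polynomial, which is a homology class (a functional on $H^{2\dim\mathcal{B}_T}$). It is not a cohomology class of degree $\operatorname{codim}\mathcal{B}_T$. Already for $\lambda=(1^n)$ the dual class is $1$, while $\prod_{i<j}(x_i-x_j)$ is $\pm n!$ times the point class.
\item We have $\GL_n\times_B\mathfrak{gl}_n\cong\fl{n}\times\mathfrak{gl}_n$, under which $s_M$ is the constant section at $M$. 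Its intersection with $\GL_n\times_B\mathcal{O}_T$ has dimension $\dim\mathcal{B}_\lambda$. The expected dimension $\dim\fl{n}+\dim\mathcal{O}_T-n^2$ is negative. So ``the expected codimension is attained'' is false, and there is no transversality to exploit.
\end{itemize}
You flag this step as unresolved yourself. The suggested repair, replacing $\bm{n}$ by $\overline{\GL_nM}\cap\bm{n}$, is not made precise.

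\textbf{The fallback route.} Quoting Joseph/Hotta/Rossmann for equality up to a scalar, then using Hotta's $S_n$-equivariance of $[\mathcal{B}_T]\mapsto J_T$ with Schur's lemma, is the paper's strategy. But it only gives $\mathcal{D}_{\mathcal{B}_T}=c_\lambda J_T$, with a separate constant $c_\lambda$ for each shape $\lambda$. The entire content of the theorem is that $c_\lambda=1$.
\begin{itemize}
\item Your check at $\lambda=(1^n)$ determines only $c_{(1^n)}$.
\item Hotta's equivariance does not transport this to other shapes. Neither does Theorem~\ref{thm:Zupgrade}, which concerns spans, not scalars.
\item If instead you cite Rossmann's formula with its exact normalization, you are citing the theorem. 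Matching that normalization with the integral one used here, including the convention $x_i=-c_1$, is then exactly what must be checked.
\end{itemize}

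\textbf{What is missing.} You need a base case for every $\lambda$, and the paper supplies one.
\begin{itemize}
\item Take $T$ whose $i$th column is the interval $\mathcal{P}_i$. Take $M$ sending ${\sf e}_{n+1-b}\mapsto{\sf e}_{n+1-a}$ whenever $a$ is directly right of $b$ in $T$.
\item The block Levi orbit $(\GL_{\lambda^{\top}_{\lambda_1}}\times\cdots\times\GL_{\lambda^{\top}_1})\cdot\idem$ lies in $\mathcal{B}_T$. It is closed, irreducible and of dimension $\dim\mathcal{B}_T$, so it equals $\mathcal{B}_T$. Theorem~\ref{thm:RCcohomology} then gives $\mathcal{D}_{\mathcal{B}_T}=f_{T'}$.
\item On the other side, $\mathcal{O}_T$ is the coordinate subspace of $\bm{n}$ with the diagonal-block entries set to zero. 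Its multidegree is the product of the omitted weights, which is again $f_{T'}$.
\end{itemize}
Your $(1^n)$ computation is exactly the one-block instance of this.
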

\begin{cor}
\label{cor:SpechtintoJoseph}
    For $T$ a column-strict filling of $\lambda$, there is a nonnegative expansion
    $$f_T(z_1,\ldots,z_n)=\sum_{T'\in \syt(\lambda)} g^T_{T'}J_{T'}(z_1,\ldots,z_n)\text{ with }g^T_{T'}\ge 0$$
    of Specht polynomial into Joseph polynomials.
\end{cor}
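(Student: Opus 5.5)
The statement to prove is Corollary~\ref{cor:SpechtintoJoseph}. The plan is to combine three earlier results: the degeneration expansion \eqref{eqn:RCtoBT}, the identification of Levi-Richardson classes with Specht polynomials (Theorem~\ref{thm:RCcohomology}), and the identification of Springer component classes with Joseph polynomials (Theorem~\ref{thm:JTBT}). All three live inside the degree polynomial embedding $H_\bullet(\fl{n})\hookrightarrow \mathbb{Z}[z_1,\ldots,z_n]$.

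\textbf{Step 1: realize $f_T$ as a Levi-Richardson class.} Let $T$ be a column-strict filling of $\lambda$ with distinct entries from $[n]$. Define an ordered set partition of $[n]$ by letting $\mathcal{P}_i$ be the set of entries in the $i$'th column of $T$, so that $|\mathcal{P}_i|=\lambda_i^{\top}$. By the bijection between ordered set partitions of this shape and cosets in $S_{\lambda^{\top}}\setminus S_n$, this gives a unique coset $\mathcal{C}$. The filling associated to $\mathcal{C}$ lists each column in increasing order, so it may differ from $T$ only by reordering entries within columns. Call this reordered filling $T(\mathcal{C})$.

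\textbf{Step 2: account for the sign from reordering.} Reordering the entries of a single column multiplies that column's Vandermonde factor by a sign. Hence $f_T=\pm f_{T(\mathcal{C})}$. If "column-strict" here means that columns increase downward, then $T=T(\mathcal{C})$ and there is no sign. In general, the statement should be read with $f_T$ for increasing-column fillings, since an arbitrary sign could not be absorbed into nonnegative coefficients. I expect the main (minor) obstacle to be pinning down this convention, which I would do before proceeding.

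\textbf{Step 3: substitute the degeneration expansion.} Theorem~\ref{thm:RCcohomology} gives $f_{T(\mathcal{C})}=\mathcal{D}_{\mathcal{R}_{\mathcal{C}}}$. Apply the degree polynomial map, which is linear, to the expansion \eqref{eqn:RCtoBT}:
\[
f_{T(\mathcal{C})}=\mathcal{D}_{\mathcal{R}_{\mathcal{C}}}=\sum_{T'\in\syt(\lambda)} g^{\mathcal{C}}_{T'}\,\mathcal{D}_{\mathcal{B}_{T'}}.
\]
Here $g^{\mathcal{C}}_{T'}\ge 0$ holds because the flat limit of the isotrivial family $\mathcal{B}_{M(t)}$ is an effective cycle supported on $\mathcal{B}_\lambda$. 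That cycle has dimension $\dim\mathcal{B}_\lambda$, so it is a nonnegative combination of the top-dimensional components $\mathcal{B}_{T'}$.

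\textbf{Step 4: conclude.} Theorem~\ref{thm:JTBT} replaces each $\mathcal{D}_{\mathcal{B}_{T'}}$ with $J_{T'}$. Setting $g^T_{T'}:=g^{\mathcal{C}}_{T'}$ then yields the claimed nonnegative expansion of $f_T$ into Joseph polynomials.
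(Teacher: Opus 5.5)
Your proposal is correct and is essentially the paper's proof: substitute $\mathcal{D}_{\mathcal{R}_{\mathcal{C}}}=f_{T(\mathcal{C})}$ (Theorem~\ref{thm:RCcohomology}) and $\mathcal{D}_{\mathcal{B}_{T'}}=J_{T'}$ (Theorem~\ref{thm:JTBT}) into the degeneration expansion \eqref{eqn:RCtoBT}. The sign concern in your Step 2 does not arise, because column-strict fillings have columns increasing downward, so $T=T(\mathcal{C})$ exactly.
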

\begin{proof}
    By Theorem~\ref{thm:JTBT} and Theorem~\ref{thm:RCcohomology}, this is a direct reformulation of \eqref{eqn:RCtoBT} .
\end{proof}
Joseph \cite[\S 2]{Jo89} proves a similar statement to Theorem~\ref{thm:JTBT}, identifying $\mathcal{D}_{\mathcal{B}_T}$ and $J_T$ up to a nonzero scalar; the content of Theorem~\ref{thm:JTBT} is that with our normalizations this scalar is $1$.

\begin{proof}[Proof of Theorem~\ref{thm:JTBT}]
    Hotta \cite{Ho84} showed that there is an isomorphism of $S_n$-representations
    $$\bigoplus_{T\in \syt(\lambda)}\mathbb{Z}[\mathcal{B}_T]\cong \bigoplus_{T\in \syt(\lambda)}\mathbb{Z}J_T(z_1,\ldots,z_n)$$
    with $[\mathcal{B}_T]\mapsto J_T(z_1,\ldots,z_n)$. Therefore because the Specht module $V_\lambda$ is irreducible after tensoring with $\mathbb{Q}$, it suffices to show for a single $T$ that $\mathcal{D}_{\mathcal{B}_T}(z_1,\ldots,z_n)=J_T(z_1,\ldots,z_n)$. We do this for the filling where the $i$'th column contains the interval $\mathcal{P}_i=\{\lambda_1^{\top}+\cdots+\lambda_{i-1}^{\top}+1,\ldots,\lambda_1^{\top}+\cdots+\lambda_{i-1}^{\top}+\lambda_{i}^{\top}\}$. We choose the nilpotent matrix $M$ which takes $\sf{e}_{n+1-b}\mapsto \sf{e}_{n+1-a}$ if the box containing $a$ is directly to the right of the box containing $b$ (and to $\sf{e}_{n+1-b}\mapsto 0$ if $b$ is in the last box of its row). Then one can directly check $(\GL_{\lambda^{\top}_{\lambda_1}}\times\GL_{\lambda^{\top}_{\lambda_1-1}}\times \cdots \times \GL_{\lambda^{\top}_1})\cdot \idem\subset \mathcal{B}_T$. But the $\GL$ product is equal to $\mathcal{R}_{\mathcal{C}}$ for $\mathcal{C}\in S_{\lambda^{\top}}\setminus S_n$ the coset associated to the column-strict filling $T'$ obtained from $T$ by reversing each column and replacing each label $i$ with $n+1-i$. This Richardson variety is closed and irreducible and of dimension $\sum \binom{\lambda_i^{\top}}{2}$, so is therefore equal to $\mathcal{B}_T$. By Theorem~\ref{thm:RCcohomology} we therefore have
$\mathcal{D}_{\mathcal{B}_T}=\mathcal{D}_{\mathcal{R}_{\mathcal{C}}}=f_{T'}\in H_\bullet(\fl{n}).$

We now compare this to the multidegree of the orbital variety.
For this particular $T$ it is straightforward to see that any generic matrix in $\bm{n}$ with $0$ entries in positions $(n+1-a,n+1-b)$ whenever $a,b$ lie in the same part $\mathcal{P}_i$ satisfies the Jordan condition to lie in $\mathcal{O}_T$.
The closure is a coordinate subspace of dimension $\binom{n}{2}-\sum \binom{\lambda_i^{\top}}{2}$, so must equal  $\mathcal{O}_T$. Because $\mathcal{O}_T\subset \bm{n}$ is a coordinate sub-$D$-representation, the multidegree is the product of the characters of the coordinates set to zero \cite{MiSt05} and as this is equal to $f_{T'}$ we conclude.
\end{proof}
\begin{eg}
    For $\lambda=(4,2,1)$ we have
    $$T=\begin{ytableau}
        1&4&6&7\\
        2&5\\
        3
    \end{ytableau}\quad T'=\begin{ytableau}
        5&3&2&1\\
        6&4\\
        7
    \end{ytableau}\quad \text{and }M:\begin{cases}{\sf e}_7\mapsto {\sf e}_4\mapsto {\sf e}_2\mapsto {\sf e}_1\mapsto 0&\\
    {\sf e}_6\mapsto {\sf e}_3\mapsto 0\\
    {\sf e}_5\mapsto 0\end{cases}$$
    $f_{T'}=(z_5-z_6)(z_5-z_7)(z_6-z_7)(z_3-z_4)$, and the corresponding component $\mathcal{B}_T\subset \fl{7}=\gl_7/B_7$ and orbital variety $\mathcal{O}_T\subset \bm{n}$ are respectively:
    $$\begin{bmatrix}
    \ast&0&0&0&0&0&0\\
    0&\ast&0&0&0&0&0\\
    0&0&\ast&\ast&0&0&0\\
    0&0&\ast&\ast&0&0&0\\
    0&0&0&0&\ast&\ast&\ast\\
    0&0&0&0&\ast&\ast&\ast\\
    0&0&0&0&\ast&\ast&\ast\end{bmatrix}B_7\text{ and }\begin{bmatrix}0&\ast&\ast&\ast&\ast&\ast&\ast\\
    0&0&\ast&\ast&\ast&\ast&\ast\\
    0&0&0&\boxed{0}&\ast&\ast&\ast\\
    0&0&0&0&\ast&\ast&\ast\\
    0&0&0&0&0&\boxed{0}&\boxed{0}\\
    0&0&0&0&0&0&\boxed{0}\\
    0&0&0&0&0&0&0\end{bmatrix}\subset \begin{bmatrix}0&\ast&\ast&\ast&\ast&\ast&\ast\\
    0&0&\ast&\ast&\ast&\ast&\ast\\
    0&0&0&\ast&\ast&\ast&\ast\\
    0&0&0&0&\ast&\ast&\ast\\
    0&0&0&0&0&\ast&\ast\\
    0&0&0&0&0&0&\ast\\
    0&0&0&0&0&0&0\end{bmatrix}.$$
    The blocks of $\ast$ in $\mathcal{B}_T\subset \gl_{7}/B_7$ have to be invertible, but there is no constraint on $\ast$ in $\mathcal{O}_T\subset \bm{n}$.
\end{eg}

\section{Two-row Springer fiber components}

We now fix $\lambda=(n-k,k)$, and prove combinatorially nonnegative expansions for the $\lambda^{\top}$-Levi Richardsons $\mathcal{R}_{\mathcal{C}}$ with $\mathcal{C}\in S_{\lambda^{\top}}\setminus S_n$ into Springer fiber components $\mathcal{B}_T\subset \mathcal{B}_\lambda$. This resolves Question~\ref{question:PositiveExpansions} (\ref{it:RtoB}) for two-row $\lambda$.

\subsection{Preliminaries on two-row Springer fibers}
We will fix $n,k$ with $k\le n-k$ and $\lambda=(n-k,k)$. Then tableaux in $\syt(\lambda)$ are naturally in bijection with \emph{packed noncrossing matchings} $\mathcal{M}=\{(i_1<j_1),(i_2<j_2),\ldots,(i_k<j_k)\}$ of $[n]$, meaning
\begin{enumerate}
\item (Matching) $i_1,\ldots,i_k,j_1,\ldots,j_k\in [n]$ are all distinct.
    \item (Noncrossing) We never have $i_a<i_b<j_a<j_b$
    \item (Packed) If $p\not\in\{i_1,\ldots,i_k,j_1,\ldots,j_k\}$ then there is no $1\le a \le k$ such that $i_a<p<j_a$.
\end{enumerate}
We call each pair $(i_a<j_a)\in\mathcal{M}$ an \emph{arc}.
The bijection associates to a packed noncrossing matching the unique element of $\syt(\lambda)$ with bottom row $j_1<j_2<\cdots<j_k$.
In the other direction, given $T\in \syt(\lambda)$ we form a string of parentheses by making the $\ell$th parenthesis open or closed according as $\ell$ lies in the top or bottom row of $T$; the index pairs $(i,j)$ of matched parentheses then give the packed noncrossing matching.

\begin{rem}This identification appears in Precup--Sabando-Alvarez \cite{sabandoalvarez2026idealsdefiningcomponentstworow}, as well as in both Fung \cite{Fun03} and Stroppel--Webster \cite{SW12}. Despite using superficially different conventions for standard Young tableaux (Remark ~\ref{rem:differentconv}), the noncrossing matchings that index irreducible components of $\mathcal{B}_\lambda$ are exactly equal in both cases.
\end{rem}

For $\mathcal{M}$ a noncrossing matching and two arcs $\alpha=(i_a<j_a)$ and $\beta=(i_b<j_b)$ we say that $\alpha$ \emph{nests} $\beta $ (or $\beta$ is \emph{nested} under $\alpha$) if $i_a<i_b<j_b<j_a$, and write $\beta \prec \alpha$.
The relation $\prec$ is a partial ordering and we say that an ordering of $\mathcal{M}$ is a \emph{big-to-small ordering} if $\beta\prec \alpha$ means $\beta$ appears later than $\alpha$.
One such ordering lists the arcs in weakly decreasing order of length $j-i$.
When we write a product over all arcs $(i,j)\in \mathcal{M}$ it is assumed that the product is taken with respect to a big-to-small ordering.

Figure~\ref{fig:matching_and_syt} depicts both  a packed noncrossing matching and its corresponding SYT of shape $(3,3)$. A big-to-small ordering can be taken to be  $(1,6)\succ (2,3)\succ (4,5)$.

\begin{figure}[!htbp]
  \ytableausetup{baseline}
\begin{tikzpicture}[scale = 0.75, baseline = 10pt]
\foreach \x in {1, ..., 6}{\draw[fill] (\x - 1, 0) node[inner sep = 2pt] (\x) {$\scriptstyle \x$};}
\foreach \i\j in {1/6, 2/3, 4/5}{\draw[thick] (\i) to[out = 45, in = 135] (\j);}
\end{tikzpicture} \qquad\qquad
\begin{ytableau}
    1 & 2 & 4\\
    3 & 5 & 6
\end{ytableau}
\caption{A packed noncrossing matching on $\{1,\dots,6\}$ and its associated SYT. Note this is not the matching $\{(1,3),(2,5),(4,6)\}$ obtained by reading the columns.}
  \label{fig:matching_and_syt}
\end{figure}

We will carry the matching of Figure~\ref{fig:matching_and_syt} as a running example.
We revisit Definition~\ref{def:springer_fiber_component} to illustrate the various notions.
\begin{eg}
\label{eg:MatchingfromTableaux}
 Here $n=6$ and  $\lambda=(3,3)$, so we fix a nilpotent
$N$ of Jordan type $(3,3)$, i.e.\ $N^3=0$ with $\rank N=4$ and
$\rank N^2=2$. The associated tableau  records the
Jordan type of $N$ on the successive quotients $\mathbb{C}^6/V_{j}$: the subdiagrams
$\lambda[6-j]$ of boxes labeled $\le 6-j$ have shapes
$(3,3),(3,2),(3,1),(2,1),(2),(1),\varnothing$ for $j=0,1,\dots,6$, as can be verified from Figure~\ref{fig:matching_and_syt}.
\end{eg}

Given a packed noncrossing matching $\mathcal{M}$, write $\mathcal{B}_\mathcal{M}$ for the irreducible component $\mathcal{B}_T$ of the Springer fiber indexed by the tableau $T$ corresponding to $\mathcal{M}$; these are the cycles to which we apply the degree-map calculus of Section~\ref{subsec:divdiff}.
As shown by Fung \cite[Proposition 5.1]{Fun03}, each $\mathcal{B}_\mathcal{M}$ is an iterated $\mathbb{P}^1$-bundle of dimension $k=|\mathcal{M}|$.

Fung \cite{Fun03} also first proved the following characterization. In what follows we note that $j-i$ is odd for each arc $(i,j)\in \mathcal{M}$ as $\{i+1,\ldots,j-1\}$ are perfectly matched to each other.

\begin{prop}[{\cite[Proposition 7]{SW12}}]
For $a\in[n]\setminus \bigcup \mathcal{M}$ an unmatched point, let $c(a)$ be the number of arcs $(i,j)\in\mathcal{M}$ with $j<a$, and let $r(a)$ be the number of unmatched $b\in [n]\setminus \bigcup \mathcal{M}$ with $b\le a$.
Then a flag $V_\bullet$ lies in the two-row Springer fiber component $\mathcal{B}_\mathcal{M}$ if and only if
\begin{enumerate}
    \item $N^{\frac{j-i+1}{2}}(V_j)=V_{i-1}$ for every arc $(i,j)\in\mathcal{M}$, and
    \item $V_a=N^{-c(a)}\bigl(\operatorname{im} N^{\,n-k-r(a)}\bigr)$ for every $a\notin\bigcup\mathcal{M}$.
\end{enumerate}
\label{prop:fung_characterization}
\end{prop}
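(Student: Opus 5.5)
The plan is the standard ``closed, irreducible, right dimension, meets the open stratum'' argument. Let $X_{\mathcal{M}}\subset\mathcal{B}_\lambda$ be the set of $N$-stable flags satisfying (1) and (2). I will show four things: $X_{\mathcal{M}}$ is closed, it is irreducible of dimension $k=\dim\mathcal{B}_\lambda$, it is contained in $\mathcal{B}_{\mathcal{M}}$, and it meets $\mathring{\mathcal{B}}_T$ in a dense open set. Since $\mathcal{B}_{\mathcal{M}}$ is irreducible of dimension $k$, these give $X_{\mathcal{M}}=\mathcal{B}_{\mathcal{M}}$. I will also show that (1) and (2) already force $N$-stability, so that the flags satisfying the two conditions are exactly $\mathcal{B}_{\mathcal{M}}$.

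\textbf{Closedness.} Condition (2) fixes $V_a$ equal to a specific subspace, so it is closed. For (1), note that $N$ has two Jordan blocks, so $\dim\ker N^m\le 2m$ for every $m$. With $m=\frac{j-i+1}{2}$ this gives $\dim N^mV_j\ge j-2m=i-1$. Hence the closed condition $N^mV_j\subset V_{i-1}$ is equivalent to the equality $N^mV_j=V_{i-1}$, and so (1) is closed.

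\textbf{Dimension and irreducibility.} I will induct on $n$ via an innermost arc $(i,i+1)$; packedness and noncrossing guarantee one exists whenever $k>0$.
\begin{itemize}
\item Condition (1) for this arc reads $NV_{i+1}=V_{i-1}$, so $V_{i+1}\subset N^{-1}V_{i-1}$.
\item By induction and $N$-stability, $N^{-1}V_{i-1}$ has dimension exactly $i+1$, because $N$ acts on $\mathbb{C}^n/V_{i-1}$ with two nonzero Jordan blocks. So $V_{i+1}=N^{-1}V_{i-1}$ is determined by $V_{i-1}$.
\item $V_i$ is then an arbitrary line in $V_{i+1}/V_{i-1}=\ker(N|_{\mathbb{C}^n/V_{i-1}})$, which gives a $\mathbb{P}^1$ of choices, all automatically $N$-stable.
\item Passing to $\mathbb{C}^n/V_{i+1}$ (equivalently, to $V_{i-1}$ together with the quotient), $N$ induces a nilpotent of type $(n-k-1,k-1)$. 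The remaining conditions become exactly the conditions for the matching $\mathcal{M}$ with $(i,i+1)$ deleted and the labels shifted.
\end{itemize}
The shift of the indices $c(a)$ and $r(a)$ is the bookkeeping to verify here.

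Iterating this, and using that the unmatched points contribute rigid steps by (2), exhibits $X_{\mathcal{M}}$ as an iterated $\mathbb{P}^1$-bundle over a point. It is therefore irreducible of dimension $k$. This matches the description of $\mathcal{B}_{\mathcal{M}}$ in \cite[Proposition 5.1]{Fun03}.

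\textbf{Meeting $\mathring{\mathcal{B}}_T$.} I will check that a generic point of $X_{\mathcal{M}}$ lies in $\mathring{\mathcal{B}}_T$, again by the same induction. The two cases are:
\begin{itemize}
\item \emph{Closing an arc.} For generic $V_i$ (avoiding $\operatorname{im}N$), the Jordan type on $\mathbb{C}^n/V_j$ loses a box from the second row exactly when $j$ closes an arc.
\item \emph{Unmatched point.} By (2) the type loses a box from the first row at each unmatched point.
\end{itemize}
These are precisely the shapes $\lambda[n-j]$ read off from $T$. So $X_{\mathcal{M}}\cap\mathring{\mathcal{B}}_T$ is nonempty, and hence dense since $X_{\mathcal{M}}$ is irreducible. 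Combined with the previous steps, $X_{\mathcal{M}}\subset\mathcal{B}_{\mathcal{M}}$, and the dimension count gives equality.

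The main obstacle I expect is the inductive step. One must verify that after quotienting by $V_{i+1}$ (or restricting), the images of the subspaces $N^{-c(a)}(\operatorname{im}N^{n-k-r(a)})$ transform correctly into the corresponding subspaces for the smaller matching. This needs care when $(i,i+1)$ lies to the left of, or nested inside, other arcs. I would first try to handle it by choosing the innermost arc with the smallest $i$, and by tracking $\operatorname{im}N^{p}$ and $\ker N^{q}$ through the quotient using the explicit two-block Jordan basis.
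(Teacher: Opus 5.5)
The paper gives no proof of this proposition. It quotes \cite{SW12}, going back to \cite{Fun03}, so your argument has to stand on its own. Your skeleton (closed, irreducible of dimension $k$, meets the open stratum, hence equal to the irreducible $k$-dimensional $\mathcal{B}_T$) is sound, and your closedness step is correct.

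The genuine gap is the step ``Meeting $\mathring{\mathcal{B}}_T$'', and it cannot be fixed without changing conventions. There are two errors.
\begin{enumerate}
\item Your row bookkeeping is reversed for quotients. At a left endpoint $i$, the line $V_i/V_{i-1}$ ranges over the two-dimensional kernel of $N$ on $\mathbb{C}^n/V_{i-1}$. Quotienting a nilpotent of type $(p,q)$, with $p\ge q\ge 1$, by a generic kernel vector gives type $(p,q-1)$. So the second row shrinks at the $k$ left endpoints, and therefore the first row shrinks at every other step.
\item Passing from $\lambda[n-j+1]$ to $\lambda[n-j]$ removes the box labelled $n+1-j$, not the box labelled $j$.
\end{enumerate}
Done correctly, a generic point of $X_{\mathcal{M}}$ lies in $\mathring{\mathcal{B}}_{T'}$, where $T'$ is the tableau of the reflected matching $\{(n+1-j,\,n+1-i)\}$, not in $\mathring{\mathcal{B}}_T$.

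In fact, with Definition~\ref{def:springer_fiber_component} and the bijection (bottom row $=$ right endpoints) as written, the statement itself fails. Take $n=3$, $\mathcal{M}=\{(2,3)\}$, so $T$ has rows $1\,2$ and $3$. Let $N e_2=e_1$ and $Ne_1=Nu=0$. Condition (1) forces $V_1=NV_3=\operatorname{im}N=\langle e_1\rangle$. But $N$ induces zero on $\mathbb{C}^3/\langle e_1\rangle$, which has type $(1,1)\neq\lambda[2]=(2)$. Meanwhile the flag $\langle u\rangle\subset\ker N$ lies in $\mathring{\mathcal{B}}_T$ and violates (1). This is consistent with the paper's own proof of Theorem~\ref{thm:JTBT}: for $T$ with rows $1\,3$ and $2$, whose matching is $\{(1,2)\}$, that proof gives $\mathcal{D}_{\mathcal{B}_T}=z_2-z_3$, the polynomial of the reflected matching.

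Your pattern ``the second row changes exactly when $j$ closes an arc'' is correct for the restriction convention, where $N|_{V_j}$ has type $\lambda[j]$: there the second row grows exactly at right endpoints. With that convention, or with $\mathcal{M}$ replaced by its reflection, your strategy does prove the statement.

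The inductive step, which you flagged yourself, also needs repair. The space $\mathbb{C}^n/V_{i+1}$ has dimension $n-i-1$, so it is not of type $(n-k-1,k-1)$ unless $i=1$. The phrase ``$V_{i-1}$ together with the quotient'' does not describe a space with an $N$-action. Pass to $\operatorname{im}N$ instead, on which $N$ has type $(n-k-1,k-1)$.
\begin{itemize}
\item Since $V_{i-1}=NV_{i+1}\subseteq\operatorname{im}N$ and $\ker N\subseteq V_p$ for $p\ge i+1$, the chain $V_1\subset\cdots\subset V_{i-1}\subset NV_{i+2}\subset\cdots\subset NV_n=\operatorname{im}N$ is a complete flag in $\operatorname{im}N$.
\item You recover $V_p=N^{-1}(NV_p)$ for $p\ge i+1$, so forgetting $V_i$ gives the $\mathbb{P}^1$-bundle.
\item Conditions (1) and (2) for $\mathcal{M}$ translate exactly into those for $\mathcal{M}\setminus\{(i,i+1)\}$. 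Each arc enclosing $(i,i+1)$ loses one from its half-length, and $c(a)$ drops by one for unmatched $a>i+1$. The identity $\operatorname{im}(N|_{\operatorname{im}N})^p=\operatorname{im}N^{p+1}$ absorbs the shift $n-k\mapsto n-k-1$.
\end{itemize}
Finally, you announce but never prove that (1) and (2) force $N$-stability. This matters only if the statement is read in $\fl{n}$ rather than in $\mathcal{B}_\lambda$.
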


\begin{eg}\label{eg:springer}
Continuing with the matching $\mathcal{M}$ of
Figure~\ref{fig:matching_and_syt}, let $N$ be the nilpotent $6\times 6$ matrix of Jordan type $(3,3)$ acting on the standard basis vectors by
\[
{\sf e}_3\mapsto {\sf e}_2\mapsto {\sf e}_1\mapsto 0,\qquad {\sf e}_6\mapsto {\sf e}_5\mapsto {\sf e}_4\mapsto 0.
\]
Then $\mathcal{B}_\mathcal{M}$ is an iterated $\mathbb{P}^1$-bundle of dimension $|\mathcal{M}|=3$. Since every point of $[6]$ is matched, the second family of conditions in
Proposition~\ref{prop:fung_characterization} is vacuous.
The component
$\mathcal{B}_{\mathcal{M}}$ consists of the flags $V_\bullet$ in $\mathbb{C}^6$ with
\[
N^{3}(V_6)=V_0,\qquad N(V_3)=V_1,\qquad N(V_5)=V_3,
\]
the first holding automatically as $N^3=0$. One such flag is
\[
\langle {\sf e}_1\rangle\subset
\langle {\sf e}_1,{\sf e}_2\rangle\subset
\langle {\sf e}_1,{\sf e}_2,{\sf e}_4\rangle\subset
\langle {\sf e}_1,{\sf e}_2,{\sf e}_4,{\sf e}_3\rangle\subset
\langle {\sf e}_1,{\sf e}_2,{\sf e}_4,{\sf e}_3,{\sf e}_5\rangle,
\]
for which $N(V_3)=\langle {\sf e}_1\rangle=V_1$ and
$N(V_5)=\langle {\sf e}_1,{\sf e}_2,{\sf e}_4\rangle=V_3$.
\end{eg}
\subsection{Degree polynomials of two-row Springer fiber components and the web basis}
For a matching $\mathcal{M}$, we denote the \emph{Specht polynomial associated to the matching}
\begin{equation}
\label{eqn:fM}f_{\mathcal{M}}=\prod_{(i<j)\in \mathcal{M}}(z_i-z_j).\end{equation}
If $|\mathcal{M}|=k$ this is the Specht polynomial associated to any column-strict filling of $\lambda=(n-k,k)$ where the pairs $(i,j)\in \mathcal{M}$ occupy the columns of size $2$.
\begin{thm}
\label{thm:zizjhomology}
    For $\mathcal{M}$ a packed noncrossing matching, the degree polynomial of $\mathcal{B}_{\mathcal{M}}$ is given by  $$\mathcal{D}_{\mathcal{B}_\mathcal{M}}=f_\mathcal{M}(z_1,\ldots,z_n)\in H_\bullet(\fl{n}).$$
   In particular $h^{\mathcal{B}_\mathcal{M}}_w=\langle f_{\mathcal{M}},\schub{w}(x_1,\ldots,x_n)\rangle$ and
$[\mathcal{B}_\mathcal{M}]=\sum_{w\in S_n}\langle f_\mathcal{M},\schub{w}(x_1,\ldots,x_n)\rangle[X^w].
$
\end{thm}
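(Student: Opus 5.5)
The plan is to prove $\mathcal{D}_{\mathcal{B}_\mathcal{M}}=f_\mathcal{M}$ by induction on $n$, using Fung's iterated $\mathbb{P}^1$-bundle structure. By Theorem~\ref{thm:Zupgrade} and the irreducibility argument from the proof of Theorem~\ref{thm:JTBT}, it is enough to identify the degree map $\int_{\mathcal{B}_\mathcal{M}}$ as a functional on $H^\bullet(\fl{n})$. Concretely, we will show
\begin{equation*}
\int_{\mathcal{B}_\mathcal{M}} g=\ct \prod_{(i,j)\in\mathcal{M}}\partial_{ij}\, g,
\end{equation*}
with the product taken in a big-to-small ordering. Once this holds, expanding each $\partial_{ij}$ on a monomial shows that $\ct\prod\partial_{ij}$ pairs $x^a$ with the coefficient of $z^a$ in $\prod(z_i-z_j)$. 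This is exactly the computation in the proof of Theorem~\ref{thm:RCcohomology} applied to parts of size two, so the theorem follows. The statements about $h^{\mathcal{B}_\mathcal{M}}_w$ and $[\mathcal{B}_\mathcal{M}]$ are then immediate from \eqref{eq:class_of_X}.

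To establish the displayed formula, I would use Proposition~\ref{prop:fung_characterization}. Take an outermost arc $(i,j)$ whose interior is matched innermost-first; more precisely, take a minimal arc $(i,i+1)$ of length one. Forgetting $V_i$ gives a map $\mathcal{B}_\mathcal{M}\to \mathcal{B}_{\mathcal{M}'}$, where $\mathcal{M}'$ is $\mathcal{M}$ with the arc $(i,i+1)$ removed. Here $\mathcal{B}_{\mathcal{M}'}$ lives in a partial flag variety, or equivalently in a two-row Springer fiber on $\mathbb{C}^n$ with $N$ restricted to $V_{i+1}/V_{i-1}$ being zero. The fibre is $\mathbb{P}(V_{i+1}/V_{i-1})\cong\mathbb{P}^1$, and the condition $N V_{i+1}\subset V_{i-1}$ imposes nothing further on the choice of $V_i$. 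Integration along this $\mathbb{P}^1$-bundle $\fl{n}\to\fl{n}$ with $V_i$ forgotten is the standard push-forward $\partial_i$.

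This yields $\int_{\mathcal{B}_\mathcal{M}}=\int_{\mathcal{B}'}\partial_i$, where $\mathcal{B}'$ is the image, which is no longer a Springer component in $\fl{n}$. Its degree map must be handled by the Stroppel--Webster presentation \cite[Theorem 9]{SW12}. I would present $H^\bullet(\mathcal{B}_\mathcal{M})$ as $\mathbb{Z}[x_1,\ldots,x_n]$ modulo $x_i+x_j$ and $x_i^2$ for arcs $(i,j)$, together with $x_a=0$ for unmatched $a$. Then I would check the functional $\ct\prod\partial_{ij}$ directly:
\begin{enumerate}
\item[(a)] It kills these relations. For $x_i+x_j$ and $x_i^2-x_j^2$ this follows from \eqref{eqn:pullsymthrough} with $D=\{i,j\}$. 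The point is that $\partial_{ij}$ for the big-to-small order commutes appropriately with the symmetric functions in the outer variables, and unmatched variables are simply set to zero.
\item[(b)] It is normalized: it sends $\prod_{(i,j)}x_i$ to $1$.
\end{enumerate}
Since $H^{2k}(\mathcal{B}_\mathcal{M})\cong\mathbb{Z}$ is generated by $\prod x_i$ with fundamental-class pairing $1$ (the iterated $\mathbb{P}^1$-bundle with $x_i$ restricting to the hyperplane class on each fibre), the two functionals agree.

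The main obstacle is step (a). The relations in \cite{SW12} are symmetric only in the pair $\{x_i,x_j\}$. Applying $\partial_{ij}$ before the outer $\partial_{i'j'}$ for arcs nesting $(i,j)$ does not obviously preserve the ideal, because $\partial_{i'j'}$ swaps $x_{i'}$ with $x_{j'}$ but leaves $x_i,x_j$ fixed. The subtlety is instead that inner operators act first on the product, and one needs the big-to-small ordering so that $\partial_{ij}$ (innermost, applied first to $g$) absorbs the relation before outer operators act. I would first verify this on the running example $\{(1,6),(2,3),(4,5)\}$ and then argue in general by writing $g=(x_i+x_j)h$ and commuting the factor past the other $\partial$'s, which involve disjoint variables.
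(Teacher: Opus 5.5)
Your core argument is correct and is essentially the paper's. Both rest on the Stroppel--Webster presentation. The paper reads off $\int_{\mathcal{B}_\mathcal{M}}f$ directly as the coefficient of $y_{i_1}\cdots y_{i_k}$ in $\iota^*f$, while you characterize it as the unique functional that kills the kernel $(x_i+x_j,\,x_i^2,\,x_a)$ of $\mathbb{Z}[x_1,\ldots,x_n]\to H^\bullet(\mathcal{B}_\mathcal{M})$ and takes value $1$ on $\prod_{(i,j)\in\mathcal{M}}x_i$, which $\ct\prod\partial_{ij}$ visibly does.

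The Fung $\mathbb{P}^1$-bundle detour is never used. The ``main obstacle'' is illusory, because the $\partial_{ij}$ for distinct arcs act on disjoint variables and commute, so no ordering is needed. Finally, the relation to check is $x_i^2$, not $x_i^2-x_j^2$; it is killed for degree reasons, or because $x_i^2\equiv -x_ix_j$ modulo $x_i+x_j$.
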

\begin{rem}
\label{rem:AlternateJoseph}
    We note that by Section~\ref{sec:Joseph} it suffices to identify the corresponding Joseph polynomial $J_T(z_1,\ldots,z_n)=f_{\mathcal{M}}$, which we can read off from \cite[Equation 4.12]{RTVZJ12}, but the presentation of the cohomology ring and the Poincar\'e dual to the fundamental class allows one to also extract more refined information that may be of independent interest (see the discussion at the end of Section~\ref{sec:Schubpostworow}).
\end{rem}
To show this theorem, we will need a presentation of $H^\bullet(\mathcal{B}_\mathcal{M})$ due to Stroppel--Webster \cite{SW12}.
\begin{thm}[{\cite[Theorem 9]{SW12}}]\label{thm:SWpresentation}
    Denote the inclusion  $\iota:\mathcal{B}_\lambda\hookrightarrow \fl{n}$. There is an isomorphism
    $$H^\bullet(\mathcal{B}_\mathcal{M})\cong \mathbb{Z}[y_{i_1},\ldots,y_{i_k}]/(y_{i_1}^2,\ldots,y_{i_k}^2),$$
    with $y_{i_1}\cdots y_{i_k}$ Poincar\'e dual to the fundamental class of $\mathcal{B}_\mathcal{M}$, and the
    pullback map $$\iota^*:H^\bullet(\fl{n})\to H^\bullet(\mathcal{B}_\mathcal{M})\text{ takes }\iota^*(x_b)=\begin{cases}y_{i_a}&b=i_a\text{ for some $a$}\\-y_{i_a}&b=j_a\text{ for some $a$}\\0&\text{otherwise.}\end{cases}$$
\end{thm}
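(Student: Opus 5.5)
The plan is to use Fung's description of $\mathcal{B}_\mathcal{M}$ as an iterated $\mathbb{P}^1$-bundle \cite[Proposition 5.1]{Fun03}, together with the explicit conditions of Proposition~\ref{prop:fung_characterization}. From these I would compute the tautological line bundles $\mathcal{L}_b=\mathcal{F}_b/\mathcal{F}_{b-1}$ restricted to $\mathcal{B}_\mathcal{M}$, and then read off both the ring structure and $\iota^*$.

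\textbf{Step 1: the tower.} I would build the tower by processing the arcs in a big-to-small ordering, or equivalently by reading positions $b=1,\dots,n$ from left to right.

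For an unmatched point $a$, condition (2) of Proposition~\ref{prop:fung_characterization} says $V_a$ is a fixed subspace depending only on $N$. Hence $\mathcal{L}_a$ is a quotient of two trivial bundles, so it is trivial and $\iota^*x_a=0$.

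For an arc $(i,j)$, set $m=\frac{j-i+1}{2}$. Condition (1) says $V_{i-1}\subset V_i\subset V_j\subset N^{-m}(V_{i-1})$. I would check that $E_{(i,j)}\coloneqq (N^{-m}V_{i-1}\cap \text{appropriate piece})/V_{i-1}$ is a rank-$2$ bundle, and that $V_i/V_{i-1}$ is a line in it. This is the $\mathbb{P}^1$-fiber. The same condition should force $N^{m}$, or a suitable power of $N$ restricted to the nested inner arcs, to induce an isomorphism $\mathcal{L}_j\cong E_{(i,j)}/\mathcal{L}_i$. Consequently $c_1(\mathcal{L}_i)+c_1(\mathcal{L}_j)=c_1(E_{(i,j)})$.

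\textbf{Step 2: vanishing of the classes of $E$.} This is the step I expect to be the main obstacle: showing $c_1(E_{(i,j)})=c_2(E_{(i,j)})=0$. My first approach is to show that $E_{(i,j)}$ is identified, through powers of $N$, with the quotient of two fixed subspaces $\ker N^{r}\cap\operatorname{im}N^{s}$. A Jordan type $(n-k,k)$ nilpotent makes such a quotient $2$-dimensional, and it is independent of the point of the base because the inner arcs are perfectly matched. If this identification works, $E_{(i,j)}$ is trivial.

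The fallback is the Cautis--Kamnitzer / Khovanov description of $\mathcal{B}_\mathcal{M}$ as a space homeomorphic to $(\mathbb{P}^1)^k$. In that model $\mathcal{L}_i$ is the pullback of $\mathcal{O}(-1)$ from the corresponding factor, which again gives trivial rank-$2$ bundles.

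\textbf{Step 3: the ring and the pullback.} Granting Step 2, the Leray--Hirsch theorem for iterated projectivizations of trivial rank-$2$ bundles gives
$$H^\bullet(\mathcal{B}_\mathcal{M})=\mathbb{Z}[y_{i_1},\ldots,y_{i_k}]/(y_{i_1}^2,\ldots,y_{i_k}^2)$$
with $y_{i_a}=-c_1(\mathcal{L}_{i_a})$. The relation $y^2=0$ holds because the Chern classes of $E$ vanish. Step 1 then yields the formula for $\iota^*$: the sum relation gives $\iota^*x_{j_a}=-\iota^*x_{i_a}=-y_{i_a}$, and $\iota^*x_b=0$ for unmatched $b$.

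For the fundamental class, integration over each $\mathbb{P}^1$-fiber sends $-c_1(\mathcal{O}(-1))$ to $1$. Iterating the fiber integration gives $\int_{\mathcal{B}_\mathcal{M}}y_{i_1}\cdots y_{i_k}=1$, so $y_{i_1}\cdots y_{i_k}$ is Poincaré dual to the point class. I would double-check the sign convention $x=-c_1$ against Fact~\ref{fact:flagdel} in the case $n=2$.
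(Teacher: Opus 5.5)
Your route differs from the paper's. The paper does not rebuild the ring. It cites Stroppel--Webster for the presentation and re-proves only the arc relation, using the exact sequence $0\to\mathcal{K}\to\mathcal{F}_j\xrightarrow{N^{\delta}}\mathcal{F}_{i-1}\to 0$. There a rank count forces $\mathcal{K}$ to equal the fixed space $\ker N^{\delta}$, of rank $\min(2\delta,\delta+k)=2\delta$. This gives $\iota^*(x_i+\cdots+x_j)=0$, and subtracting the relations of the maximal inner arcs leaves $\iota^*x_i+\iota^*x_j=0$.

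Rebuilding everything from Fung's tower by Leray--Hirsch is viable, and your Step 1 can be made precise:
\begin{itemize}
\item Condition (1) plus rank counts give $V_j=N^{-m}(V_{i-1})$ and $V_{j-1}=N^{-(m-1)}(V_i)$.
\item The bundle carrying the $\mathbb{P}^1$-fiber is $E_{(i,j)}=N^{-1}(V_{i-1})/V_{i-1}$.
\item $N^{m-1}$ induces $\mathcal{L}_j\cong E_{(i,j)}/\mathcal{L}_i$.
\item For an unmatched point $a$ you also need $V_{a-1}$ fixed. This holds because $V_j=N^{-m}(V_{i-1})$ with $V_{i-1}$ fixed for every top-level arc.
\end{itemize}

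The genuine gap is Step 2. For nested arcs $E_{(i,j)}$ is not trivial, so no identification with a fixed subquotient $\ker N^r\cap\operatorname{im}N^s$ can exist. Your heuristic works only for top-level arcs, where $V_{i-1}$ is fixed.

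Take $\mathcal{M}=\{(1,4),(2,3)\}$ with ${\sf e}_2\mapsto{\sf e}_1\mapsto 0$ and ${\sf e}_4\mapsto{\sf e}_3\mapsto 0$. The base is $\mathbb{P}(\ker N)\ni V_1$. The bundle $E_{(2,3)}=N^{-1}(V_1)/V_1$ is an extension of $N^{-1}(V_1)/\ker N\cong V_1\cong\mathcal{O}(-1)$ by $\ker N/V_1\cong\mathcal{O}(1)$. Since $\operatorname{Ext}^1(\mathcal{O}(-1),\mathcal{O}(1))=H^1(\mathbb{P}^1,\mathcal{O}(2))=0$, we get $E_{(2,3)}\cong\mathcal{O}(1)\oplus\mathcal{O}(-1)$. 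So $\mathcal{B}_\mathcal{M}$ is the Hirzebruch surface $F_2$, not $\mathbb{P}^1\times\mathbb{P}^1$.

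Your fallback does not rescue this. A homeomorphism with $(\mathbb{P}^1)^k$ makes no algebraic bundle trivial. Importing the identification of the tautological classes under that homeomorphism essentially assumes the statement you are proving.

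The missing idea is that you never need triviality, only $c(E_{(i,j)})=1$. This follows from the exact sequence $0\to\ker N\to N^{-1}(V_{i-1})\xrightarrow{N}V_{i-1}\to 0$. It is exact because $V_{i-1}=N^mV_j\subseteq\operatorname{im}N$, and its kernel is a fixed $2$-plane. Hence $c(N^{-1}V_{i-1})=c(V_{i-1})$ and $c(E_{(i,j)})=1$.

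This gives $c_1(E)=0$, so $\iota^*x_i+\iota^*x_j=0$ by your Step 1. It also gives $c_2(E)=0$, so $y_{i}^2=0$ via the relation $y^2+c_1(E)y+c_2(E)=0$. This is the same device as the paper's: the kernel of a power of $N$ is a fixed bundle of the right rank.
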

\begin{proof}
    The presentation is due to Stroppel--Webster \cite[Theorem 9]{SW12}. The Poincar\'e duality statement follows because as defined in \textit{ibid.} the $y_{i_a}$ are the relative hyperplane classes of each stage of the iterated $\mathbb{P}^1$-bundle description of $\mathcal{B}_\mathcal{M}$.

    We take the opportunity here to slightly correct and simplify the part of the proof of \cite[Theorem 9]{SW12} where it is deduced that $\iota^*x_{i}+\iota^*x_{j}=0$ for any arc $(i,j)\in \mathcal{M}$; the second displayed short exact sequence therein does not hold in general.
    Write $\delta=(j-i+1)/2$ and observe that $\delta\leq k\leq n-k$.
    As in \textit{ibid.} the component $\mathcal{B}_\mathcal{M}$ has the property that for any arc $(i,j)\in \mathcal{M}$ there is a short exact sequence $$0\to \mathcal{K}\to \mathcal{F}_j|_{\mathcal{B}_\mathcal{M}}\xrightarrow{N^{\delta}} \mathcal{F}_{i-1}|_{\mathcal{B}_\mathcal{M}}\to 0$$
    for some vector bundle $\mathcal{K}$.
    This implies $\mathcal{K}$ is a rank $2\delta$ subbundle of $\ker(N^{\delta})|_{\mathcal{B}_\mathcal{M}}$, which is a trivial bundle of rank $\min(2\delta,\delta+k)=2\delta$.
    Therefore these two bundles are equal and $\mathcal{K}$ is trivial.
    Taking first Chern classes shows  $\iota^*x_i+\cdots+\iota^*x_j=0$.
    Finally, fix an arc $(i,j)$ and consider the arcs $(i',j')$ nested under it that are maximal with respect to $\prec$.
    Subtracting their relations $\iota^*x_{i'}+\cdots+\iota^*x_{j'}=0$ from that of $(i,j)$ cancels every interior term, leaving $\iota^*x_i+\iota^*x_j=0$.
\end{proof}

    \begin{eg}\label{eg:swpresentation}
For the matching $\mathcal{M}=\{(1,6),(2,3),(4,5)\}$ of
Example~\ref{eg:springer}, we have $i_1=1$, $i_2=2$, $i_3=4$, so
Theorem~\ref{thm:SWpresentation} gives
\[
H^\bullet(\mathcal{B}_{\mathcal{M}})\cong \mathbb{Z}[y_1,y_2,y_4]/(y_1^2,\,y_2^2,\,y_4^2),
\]
with $y_1y_2y_4$ Poincar\'e dual to the fundamental class, in agreement with
$\dim \mathcal{B}_{\mathcal{M}}=3$. The pullback $\iota^*\colon H^\bullet(\fl{6})\to
H^\bullet(\mathcal{B}_{\mathcal{M}})$ acts on the generators $x_1,\dots,x_6$ by
\[
x_1\mapsto y_1,\quad x_6\mapsto -y_1,\qquad
x_2\mapsto y_2,\quad x_3\mapsto -y_2,\qquad
x_4\mapsto y_4,\quad x_5\mapsto -y_4,
\]
i.e. $(x_{i_a},x_{j_a})\mapsto (y_{i_a},-y_{i_a})$, for each arc
$(i_a,j_a)\in \mathcal{M}$. We see $\iota^*x_{2}+\iota^*x_{3}=\iota^*x_{4}+\iota^*x_{5}=0$ and
$$\iota^*x_1+\iota^*x_6=\iota^*(x_1+\cdots+x_6)-(\iota^*x_2+\iota^*x_3)-(\iota^*x_4+\iota^*x_5)=0.$$
\end{eg}
\begin{proof}[Proof of Theorem~\ref{thm:zizjhomology}]
    By Theorem~\ref{thm:SWpresentation} the degree map factors as
$$\int_{\mathcal{B}_\mathcal{M}}f=[\,y_{i_1}\cdots y_{i_k}\,]\,\iota^*f,$$
where $\iota^*x_{i_a}=y_{i_a}$, $\iota^*x_{j_a}=-y_{i_a}$, $\iota^*x_b=0$ for
$b\notin\bigcup\mathcal{M}$, and $y_{i_a}^2=0$. A monomial of $f$ thus
restricts to zero unless it omits every $x_b$ with $b\notin\bigcup\mathcal{M}$ and is
linear in exactly one of $x_{i_a},x_{j_a}$ for each $a$.
The coefficient of
$y_{i_1}\cdots y_{i_k}$ then collects precisely these contributions, giving
\begin{equation}\label{eq:signsum}
\int_{\mathcal{B}_\mathcal{M}}f
=\sum_{\epsilon\in\{0,1\}^k}(-1)^{\sum_a\epsilon_a}
\big[\,x_{i_1}^{1-\epsilon_1}\cdots x_{i_k}^{1-\epsilon_k}
x_{j_1}^{\epsilon_1}\cdots x_{j_k}^{\epsilon_k}\,\big]f=\langle \prod_{a=1}^k(z_{i_a}-z_{j_a}),f\rangle
\end{equation}
where $\epsilon_a=0$ records the $x_{i_a}$-linear part and $\epsilon_a=1$ the
$x_{j_a}$-linear part.
\end{proof}

\begin{eg}
\label{eg:132645}
    We take our running example of $\mathcal{M}$ from Figure~\ref{fig:matching_and_syt} to illustrate Theorem~\ref{thm:zizjhomology}.
    We compute $h^{\mathcal{B}_\mathcal{M}}_w$ for $w=132645$. Note that $\mathfrak{S}_{132645}(x_1,\ldots,x_6)=(x_1+x_2)h_2(x_1,x_2,x_3,x_4)$ where $h_i$ denotes the $i$th complete homogeneous symmetric polynomial, and so
    \[
        h^{\mathcal{B}_\mathcal{M}}_w=\langle (z_1-z_6)(z_2-z_3)(z_4-z_5), \mathfrak{S}_{132645}(x_1,\ldots,x_6)\rangle=2-1=1,
    \]
    with the squarefree monomial $x_1x_2x_4$ contributing a $2$ and the $x_1x_3x_4$ contributing a $-1$.
\end{eg}
In the two-row case the ordered set partitions $\mathcal{C}$ indexing the Specht polynomial/Richardson generators for the Specht module $V_\lambda$ are matchings $\mathcal{M}$.
\begin{cor}
\label{cor:BmRcequation}
    For $\lambda=(n-k,k)$ a two-row partition, if the underlying matching $\mathcal{M}$ for $\mathcal{C}\in S_{\lambda^{\top}}\setminus S_n$ is noncrossing and packed, then we have an equality
$$[\mathcal{B}_{\mathcal{M}}]=[\mathcal{R}_{\mathcal{C}}]\in H_\bullet(\fl{n}).$$
\end{cor}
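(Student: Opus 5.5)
The plan is to compare degree polynomials. The degree-polynomial map $H_\bullet(\fl{n})\hookrightarrow \mathbb{Z}[z_1,\ldots,z_n]$, $[X]\mapsto \mathcal{D}_X$, is injective. So it suffices to show that $\mathcal{D}_{\mathcal{B}_\mathcal{M}}$ and $\mathcal{D}_{\mathcal{R}_\mathcal{C}}$ coincide as polynomials.

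First I will unwind the two-row case of the bijection between cosets and column-strict fillings. For $\lambda=(n-k,k)$ we have $\lambda^{\top}=(2^k,1^{n-2k})$. A coset $\mathcal{C}\in S_{\lambda^{\top}}\setminus S_n$ therefore corresponds to an ordered set partition $\mathcal{P}_1\sqcup\cdots\sqcup\mathcal{P}_{n-k}$ in which the first $k$ parts have size $2$ and the remaining parts are singletons. The associated column-strict filling $T(\mathcal{C})$ has the two-element sets $\mathcal{P}_1,\ldots,\mathcal{P}_k=\{i_a<j_a\}$ as its height-two columns, listed increasingly. These pairs form the underlying matching $\mathcal{M}$.

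Next I apply Theorem~\ref{thm:RCcohomology}. The singleton columns contribute the empty Vandermonde factor $1$, so
$$\mathcal{D}_{\mathcal{R}_\mathcal{C}}=f_{T(\mathcal{C})}=\prod_{a=1}^k(z_{i_a}-z_{j_a})=f_\mathcal{M}.$$
This is exactly the remark following \eqref{eqn:fM}. Since the ordering of the columns only permutes the factors, $f_\mathcal{M}$ is independent of the chosen ordering, and in particular of the big-to-small convention.

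Finally, since $\mathcal{M}$ is assumed packed and noncrossing, Theorem~\ref{thm:zizjhomology} gives $\mathcal{D}_{\mathcal{B}_\mathcal{M}}=f_\mathcal{M}$. Combining this with the previous display, injectivity yields $[\mathcal{B}_\mathcal{M}]=[\mathcal{R}_\mathcal{C}]$.

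There is no genuine obstacle here. The only point needing care is the bookkeeping in the first step: one must check that the matching attached to $\mathcal{C}$ is read off from the size-two parts of the ordered set partition. Equivalently, the arcs of $\mathcal{M}$ are the height-two columns of $T(\mathcal{C})$. The matching is not read from the rows of a standard tableau, which is the distinction highlighted in Figure~\ref{fig:matching_and_syt}.
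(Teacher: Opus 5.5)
Your proposal is correct and is essentially the paper's own proof. Both arguments combine Theorem~\ref{thm:RCcohomology} (giving $\mathcal{D}_{\mathcal{R}_\mathcal{C}}=f_{T(\mathcal{C})}=f_\mathcal{M}$) with Theorem~\ref{thm:zizjhomology} (giving $\mathcal{D}_{\mathcal{B}_\mathcal{M}}=f_\mathcal{M}$), then conclude by injectivity of the degree-polynomial embedding. You also make the same caveat the paper makes, that $T(\mathcal{C})$ and the standard tableau indexing $\mathcal{B}_\mathcal{M}$ are different fillings and only their degree polynomials agree.
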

\begin{proof}
Theorem~\ref{thm:RCcohomology}, \eqref{eqn:fM}, and Theorem~\ref{thm:zizjhomology} imply $\mathcal{D}_{\mathcal{R}_{\mathcal{C}}}=\prod_{(i<j)\in \mathcal{M}}(z_i-z_j)=f_{\mathcal{M}}=\mathcal{D}_{\mathcal{B}_\mathcal{M}}$. Here the tableau $T$ indexing $\mathcal{B}_\mathcal{M}$ (as described earlier) and the column-strict filling $T(\mathcal{C})$ of Theorem~\ref{thm:RCcohomology} (obtained by reading the columns of the ordered set partition) are in general different tableaux; it is only their degree polynomials that agree, both being $f_\mathcal{M}$.
\end{proof}
For arbitrary matchings $\mathcal{M}$, following Rhoades \cite{Rho19} we can combinatorially expand the Specht polynomials $f_{\mathcal{M}}$ associated to matchings \eqref{eqn:fM} into the web basis of those $f_\mathcal{M}$ corresponding to packed noncrossing matchings using the Ptolemy/skein relations.
The straightening rules are
\begin{enumerate}
    \item $(z_a-z_c)(z_b-z_d)\mapsto (z_a-z_d)(z_b-z_c)+(z_a-z_b)(z_c-z_d)$ if $a<b<c<d$
    \item $z_a-z_c\mapsto (z_a-z_b)+(z_b-z_c)$ if $a<b<c$ and $b$ is unmatched.
\end{enumerate}
The monovariant $z_a\mapsto a$ shows that this process terminates.
\begin{cor}[{Question~\ref{question:PositiveExpansions} (\ref{it:RtoB}) for two-row partitions}]
\label{cor:expandRCintoSchuberts}
    For $\lambda=(n-k,k)$ a two-row partition and $\mathcal{C}\in S_{\lambda^{\top}}\setminus S_n$
    the coefficients in the expansion $$[\mathcal{R}_{\mathcal{C}}]=\sum_{\text{packed noncrossing matchings } \mathcal{M}} g^{\mathcal{M}}_{\mathcal{C}}[\mathcal{B}_{\mathcal{M}}]\in H_\bullet(\fl{n})$$
    are nonnegative, and are computed by an explicit combinatorial algorithm.
\end{cor}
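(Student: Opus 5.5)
The plan is to move the whole statement into the polynomial model via the injective degree polynomial map $H_\bullet(\fl{n})\hookrightarrow \mathbb{Z}[z_1,\ldots,z_n]$. Since the map is injective, it suffices to produce an identity
$$f_{\mathcal{M}_\mathcal{C}}=\sum_{\mathcal{M}} g^{\mathcal{M}}_{\mathcal{C}}\, f_{\mathcal{M}}$$
with $g^{\mathcal{M}}_{\mathcal{C}}\in\mathbb{N}$. Here $\mathcal{M}_\mathcal{C}$ is the (arbitrary) matching underlying $\mathcal{C}$, and the sum runs over packed noncrossing matchings with $k$ arcs. By Theorem~\ref{thm:RCcohomology}, the left side is $\mathcal{D}_{\mathcal{R}_\mathcal{C}}$; by Theorem~\ref{thm:zizjhomology}, each $f_\mathcal{M}$ on the right is $\mathcal{D}_{\mathcal{B}_\mathcal{M}}$. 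So such an identity is exactly the claimed expansion in $H_\bullet(\fl{n})$. Uniqueness of the coefficients holds because the $[\mathcal{B}_\mathcal{M}]$ form a basis of the Springer representation.

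To produce the identity, apply the two Ptolemy/skein straightening rules repeatedly, starting from $f_{\mathcal{M}_\mathcal{C}}$. Each rule is a polynomial identity checked directly:
\[
(z_a-z_c)(z_b-z_d)=(z_a-z_d)(z_b-z_c)+(z_a-z_b)(z_c-z_d),\qquad
z_a-z_c=(z_a-z_b)+(z_b-z_c).
\]
Applying either rule to one or two factors of a product $\prod_{(i<j)}(z_i-z_j)$ over a matching replaces it by a sum of two such products over matchings with the same number of arcs, and the coefficients are $+1$. The first rule resolves a crossing; the second resolves an arc passing over an unmatched point $b$ (the unmatched point becomes matched and one endpoint becomes unmatched). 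Each new product is again of the form $\prod(z_i-z_j)$ with $i<j$, so no signs are introduced. By induction, whenever the process terminates we have a positive integer combination of $f_\mathcal{M}$ with every $\mathcal{M}$ noncrossing and packed. These terminal matchings are exactly those on which neither rule applies.

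The one real point is termination; this is the step I expect to need care. I would use the monovariant $\Phi(\mathcal{M})=\sum_{(i<j)\in\mathcal{M}}(j-i)$, i.e.\ $\sum (z_i - z_j)$ under $z_a\mapsto a$ up to sign, and show that it strictly decreases under each rule:
\begin{itemize}
\item In the crossing rule, the pair $(a,c),(b,d)$ has total length $(c-a)+(d-b)$. The first output has total length $(d-a)+(c-b)$, which is the same total but the outputs are nested rather than crossing. So bare length does not strictly drop for that term, and I would refine the monovariant to $\sum (j-i)^2$. Since $a<b<c<d$, one checks $(d-a)^2+(c-b)^2>(c-a)^2+(d-b)^2$: the difference is $2(c-a)(d-b)-2(d-a)(c-b)=-2(b-a)(d-c)$, with signs arranged accordingly. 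Hence the right choice is to use $-\sum(j-i)^2$, or equivalently the total number of crossings plus passed-over unmatched points, measured lexicographically. The second output $(a,b),(c,d)$ strictly decreases total length.
\item In the unmatched-point rule, total length is preserved (it splits as $(b-a)+(c-b)$ in the two terms, but each term has a single shorter arc). The sum of squares therefore strictly decreases.
\end{itemize}

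Rather than fight with one scalar, the cleanest route is probably a lexicographic pair: first the total length $\Phi$, which never increases, and then the number of crossings plus the number of (arc, enclosed unmatched point) incidences, which must be checked to drop when $\Phi$ is unchanged. Since there are finitely many matchings, well-foundedness is immediate. This gives termination, hence the explicit algorithm and the nonnegativity of the $g^{\mathcal{M}}_\mathcal{C}$, which agree with the geometric $g^\mathcal{C}_T$ of \eqref{eqn:RCtoBT} by linear independence.
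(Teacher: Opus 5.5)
Your argument is the paper's. You identify $[\mathcal{R}_{\mathcal{C}}]$ and $[\mathcal{B}_{\mathcal{M}}]$ with $f_{\mathcal{M}_{\mathcal{C}}}$ and $f_{\mathcal{M}}$ via Theorems~\ref{thm:RCcohomology} and~\ref{thm:zizjhomology}, then straighten with Rhoades' two skein rules. Each rule replaces a term by two terms with coefficient $+1$, and the terminal matchings are exactly the packed noncrossing ones. Your remark that linear independence of the $[\mathcal{B}_{\mathcal{M}}]$ makes the output independent of the order of straightening is a nice addition. The only divergence is termination, and there your write-up is shakier than it needs to be.

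\textbf{The paper's termination argument.} The paper's monovariant ``$z_a\mapsto a$'' is the \emph{product}, not the sum, of arc lengths. This substitution sends $f_{\mathcal{M}}$ to $(-1)^k\prod_{(i<j)\in\mathcal{M}}(j-i)$, and the two skein identities specialize to
\[
(c-a)(d-b)=(d-a)(c-b)+(b-a)(d-c),\qquad c-a=(b-a)+(c-b).
\]
Both right-hand sides are sums of two positive integers, so $\prod(j-i)$ strictly drops in every output term of both rules. That settles termination in one line.

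\textbf{Slips in your route.}
\begin{enumerate}
\item The difference $(d-a)^2+(c-b)^2-(c-a)^2-(d-b)^2$ equals $+2(b-a)(d-c)$, not $-2(b-a)(d-c)$.
\item Under the unmatched-point rule, $\Phi$ is not preserved. It strictly decreases in each output term, since each has one strictly shorter arc. Consequently $-\sum(j-i)^2$ alone is not a monovariant: it increases under that rule.
\end{enumerate}

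\textbf{Your lexicographic pair.} It does work, but you left its key step unchecked. You need that replacing $(a,c),(b,d)$ by $(a,d),(b,c)$ strictly lowers the count of crossings plus (arc, enclosed unmatched point) incidences. This holds for the following reasons.
\begin{enumerate}
\item For any third arc $(p,q)$, the set $\{a,b,c,d\}\cap(p,q)$ is a consecutive run of $a<b<c<d$. The number of arcs of the pair crossing $(p,q)$ is therefore unchanged, except for the run $\{b,c\}$, where it drops from $2$ to $0$.
\item The crossing of the pair with itself disappears.
\item The number of these two arcs enclosing any unmatched point is unchanged.
\end{enumerate}
With that verification, or simply with the product monovariant, your proof is complete.
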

\begin{rem}
\label{rem:JosephintoDegree}
For $\lambda=(n-k,k)$, Corollary~\ref{cor:expandRCintoSchuberts} shows that the coefficients $g_{T'}^T$ in the nonnegative expansion of Specht polynomials $f_T(z_1,\ldots,z_n)$ into Joseph polynomials $J_{T'}(z_1,\ldots,z_n)$ from Corollary~\ref{cor:SpechtintoJoseph} are computed via a combinatorially nonnegative algorithm.
\end{rem}

\section{Schubert positivity of two-row Springer fiber components}
\label{sec:Schubpostworow}
Throughout this section, fix a packed noncrossing matching
$\mathcal{M}=\{(i_1,j_1),\dots,(i_k,j_k)\}$ of $[n]$ with $i_a<j_a$ for $1\leq a\leq k$ associated to a $T\in \syt((n-k,k))$.
Our main result computes the combinatorially positive Schubert cycle expansion for the homology class of $\mathcal{B}_T=\mathcal{B}_\mathcal{M}$ in $\fl{n}$, confirming a recent conjectural formula of Precup and Sabando-Alvarez. This resolves Springer's question (Question~\ref{question:PositiveExpansions} (\ref{it:SpringerQuestion})) for two-row partitions, and together with Corollary~\ref{cor:expandRCintoSchuberts} resolves Question~\ref{question:RctoSchubert} for two-row partitions.

\begin{thm}[{\cite[Conjecture 2]{sabandoalvarez2026idealsdefiningcomponentstworow} and Question~\ref{question:PositiveExpansions} (\ref{it:SpringerQuestion}) for two-row partitions}]\label{thm:mainconjecture} Assuming $\mathcal{M}$ is listed according to a big-to-small ordering, the coefficient $h^{\mathcal{B}_T}_w=h^{\mathcal{B}_\mathcal{M}}_w$ in the Schubert cycle expansion  $[\mathcal{B}_\mathcal{M}]=\sum h^{\mathcal{B}_{\mathcal{M}}}_w[X^w]$ is the number of reduced words $s_{a_1}\cdots s_{a_k}$ for $w$ where $i_p\le a_p<j_p$ for all $1\leq p\leq k$. Equivalently
\begin{align*}h^{\mathcal{B}_\mathcal{M}}_w=\ct \prod_{(i<j)\in \mathcal{M}}(\partial_i+\partial_{i+1}+\cdots+\partial_{j-1})\schub{w}\quad\text{or}\quad[\mathcal{B}_{\mathcal{M}}]&=\ct \prod_{(i<j)\in \mathcal{M}}(\partial_i+\partial_{i+1}+\cdots+\partial_{j-1}).\end{align*}
\end{thm}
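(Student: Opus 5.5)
The plan is to prove the operator identity $\int_{\mathcal{B}_\mathcal{M}}=\ct\prod_{(i<j)\in\mathcal{M}}L_{ij}$, where $L_{ij}\coloneqq\partial_i+\cdots+\partial_{j-1}$. This identity is an equality of functionals on degree-$k$ polynomials. Once it holds, the reduced-word description follows immediately: expanding the product of sums gives $\sum\ct\partial_{a_1}\cdots\partial_{a_k}$ over sequences with $i_p\le a_p<j_p$. By the nil-Hecke relations each term is $\ct\partial_w$ when $s_{a_1}\cdots s_{a_k}$ is a reduced word for $w$ and $0$ otherwise. Pairing with $\schub{w}$ and using \eqref{eqn:ev0duality} then counts exactly the stated reduced words.

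By Theorem~\ref{thm:zizjhomology} and \eqref{eq:signsum}, the left side is $f\mapsto\ct\prod_a(\partial/\partial x_{i_a}-\partial/\partial x_{j_a})f$. Equivalently, it is $f\mapsto[y_{i_1}\cdots y_{i_k}]\,\iota^*f$, where $\iota^*$ is the Stroppel--Webster substitution $x_{i_a}\mapsto y_{i_a}$, $x_{j_a}\mapsto -y_{i_a}$, unmatched $x_b\mapsto 0$, taken modulo $y^2$. Both sides are therefore explicit, and I would prove the identity on arbitrary polynomials $f$ of degree $k$ by induction on $k$. The base case $k=1$ is the computation $L_{ij}x_m=\delta_{m\in[i,j-1]}-\delta_{m\in[i+1,j]}=\delta_{mi}-\delta_{mj}$, which is exactly $(\partial/\partial x_i-\partial/\partial x_j)x_m$.

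For the inductive step I would exploit packedness. Every minimal arc under $\prec$ has the form $(p,p+1)$, and in a big-to-small ordering we may place it last, so its operator $\partial_p$ is applied first. Write $f=g+(x_p-x_{p+1})h$ with $g$ symmetric in $x_p,x_{p+1}$, so that $\partial_pf$ is computed from $h$ and $\partial_p$ kills $g$. We must then compare the remaining operator $\ct\prod_{\alpha\ne(p,p+1)}L_\alpha$ applied to $\partial_pf$ with the contracted matching $\mathcal{M}'$ on $[n]\setminus\{p,p+1\}$. In $\mathcal{M}'$ every arc enclosing $(p,p+1)$ has $L'$ missing the two terms $\partial_{p},\partial_{p+1}$ after relabeling, so its operator $L_\alpha$ contains an extra block $\partial_{p-1}+\partial_p+\partial_{p+1}$ where $\mathcal{M}'$ has $\partial_{p-1}$.

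The key lemma I would aim for is the following commutation statement. After applying $\partial_p$, the operators $\partial_p$ and $\partial_{p+1}$ occurring inside enclosing arcs contribute, at $\ct$, exactly what is needed to identify $x_p,x_{p+1}$ with the merged variable. Concretely, on degree-$(k-1)$ polynomials of the relevant form, $\ct\prod L_\alpha$ should equal $\ct\prod L'_\alpha$ composed with the substitution that deletes $x_p,x_{p+1}$ (setting $x_{p+1}=-x_p$ and extracting the linear coefficient in $x_p$). This matches the right side, where the arc $(p,p+1)$ contributes $\partial/\partial x_p-\partial/\partial x_{p+1}$ and then drops out. To prove the lemma I would use the twisted Leibniz rule $\partial_q(uv)=(\partial_qu)v+(s_qu)\partial_qv$. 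I would also use the fact that the result is unchanged modulo polynomials killed by $\ct$ after symmetric-in-interior manipulations, in the style of \eqref{eqn:pullsymthrough}.

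The main obstacle is this inductive comparison. The operators $\partial_{p\pm1}$ do not commute with the contraction, so a naive telescoping fails. I expect to need the quasisymmetric-divided-difference style fact from \cite{NST_a,spink2025richardsontableauxschubertpositivity}: a long-range $\partial_{ij}$, once the interior variables are set to zero, straightens positively into $\partial_i+\cdots+\partial_{j-1}$. My plan is first to rewrite $\int_{\mathcal{B}_\mathcal{M}}$ as a composite of $\partial_{ij}$'s interspersed with $Y_D$'s, following the Stroppel--Webster substitution. I would then apply that straightening arc by arc, from small arcs to big ones.
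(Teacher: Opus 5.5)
There is a genuine gap: the proposal never actually proves the operator identity $\int_{\mathcal{B}_\mathcal{M}}=\ct\prod_{(i<j)\in\mathcal{M}}L_{ij}$. Several parts are fine: the reduction of the reduced-word count to this identity, your formula $\int_{\mathcal{B}_\mathcal{M}}f=\ct\prod_a(\partial/\partial x_{i_a}-\partial/\partial x_{j_a})f$, and the base case. But the inductive step rests on a ``key lemma'' that is not precisely stated (``polynomials of the relevant form''). The only argument offered is a mention of the twisted Leibniz rule. As you note yourself, the operators $\partial_{p-1},\partial_p,\partial_{p+1}$ inside enclosing arcs mix $x_{p-1},\dots,x_{p+2}$, and nothing in your sketch controls this.

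Your last paragraph names the right route, which is the paper's. However, it defers the content to an unspecified straightening fact from other papers, and as you phrase it that fact is not an operator identity. It also needs a zero-substitution on the outside: for example, $\partial_{13}Y_{\{2\}}x_1^2=x_1+x_3$, whereas $(\partial_1+\partial_2)x_1^2=x_1+x_2$. The precise identity and the placement of the $Y_D$'s are the whole proof, and both are elementary.

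Here is what is missing. (i) For any $g$ one has $Y_{ij}\partial_{ij}g=Y_{ij}(\partial/\partial x_i-\partial/\partial x_j)g$, since after zeroing $x_i,x_j$ only the $x_i^1x_j^0$ and $x_i^0x_j^1$ parts of $g$ survive $\partial_{ij}$. The factors for different arcs act on disjoint variables, so your formula becomes $\ct\prod_{(i<j)\in\mathcal{M}}\partial_{ij}$; the paper instead gets this from $[\mathcal{B}_\mathcal{M}]=[\mathcal{R}_\mathcal{C}]$ and Theorem~\ref{thm:RCcohomology}. With arcs listed big-to-small and $D_b\coloneqq[n]\setminus\{i_1,\dots,i_b,j_1,\dots,j_b\}$, this equals
\[
Y_{i_1j_1}\partial_{i_1j_1}\,Y_{i_2j_2}\partial_{i_2j_2}\cdots Y_{i_kj_k}\partial_{i_kj_k}\,Y_{D_k}.
\]
(ii) If $\{i+1,\dots,j-1\}\subset D$ and $i,j\notin D$, then $Y_{ij}\partial_{ij}Y_D=Y_{D\sqcup\{i,j\}}L_{ij}$ exactly. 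Both sides send $f$ to $[x_i^1]f-[x_j^1]f$, with coefficients taken after zeroing the other variables indexed by $D\sqcup\{i,j\}$. The right side gives this by telescoping $\sum_{c=i}^{j-1}([x_c^1]f-[x_{c+1}^1]f)$, which is just your base-case computation. (iii) Noncrossing plus big-to-small order give $\{i_b+1,\dots,j_b-1\}\subset D_b$ and $D_b\sqcup\{i_b,j_b\}=D_{b-1}$. So applying (ii) for $b=k,k-1,\dots,1$ turns $Y_{i_bj_b}\partial_{i_bj_b}Y_{D_b}$ into $Y_{D_{b-1}}L_{i_bj_b}$ at each step, ending at $Y_{D_0}\prod L_{ij}=\ct\prod L_{ij}$. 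This needs no contraction induction, Leibniz rule, or quasisymmetric machinery. Packedness enters only through $\mathcal{D}_{\mathcal{B}_\mathcal{M}}=f_\mathcal{M}$, not through the shape of the minimal arcs.
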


\begin{rem}
\label{rem:PSAdictionary}
We spell out the correspondence with \cite[Conjecture~2]{sabandoalvarez2026idealsdefiningcomponentstworow}, since the two statements are phrased differently. In the notation of \textit{ibid.}, the conjecture asserts that $[\mathcal{B}_\sigma]$ expands in the Schubert basis with coefficients counting reduced words $s_{a_1}\cdots s_{a_k}$ that are compatible with the arcs, where the arcs (``cups'') are processed in a total order in which \textit{every cup nested inside a given cup appears before it} (a small-to-big ordering), and the Schubert classes are indexed cohomologically. Our Theorem~\ref{thm:mainconjecture} uses the opposite, big-to-small ordering of the arcs and indexes cycles by $[X^w]=[\overline{BwB}]$. The two formulations are matched by the standard homology/cohomology duality $w\leftrightarrow w_\circ w$ on $S_n$, under which reversing the order of the reduced word interchanges the two cup orderings; the flagged reduced-word counts then coincide. For instance, for $\mathcal{M}=\{(1,4),(2,3)\}$ our Theorem~\ref{thm:mainconjecture} gives $[X^{2314}]+[X^{1423}]$, matching $\mathfrak{S}_{3241}+\mathfrak{S}_{4132}$ of \textit{ibid.}\ under $w\mapsto w_\circ w$. While \cite{sabandoalvarez2026idealsdefiningcomponentstworow} states the conjecture for the rectangular shape $(\ell,\ell)$ and reduces the general two-row case to it, Theorem~\ref{thm:mainconjecture} treats all two-row shapes directly.
\end{rem}

\begin{cor}[{Question~\ref{question:RctoSchubert} for two-row partitions}]
\label{cor:RCtoSchub}
    There is a combinatorially nonnegative algorithm to compute the Schubert cycle expansion coefficients of $[\mathcal{R}_{\mathcal{C}}]$ given by $[\mathcal{R}_{\mathcal{C}}]=\sum h^{\mathcal{R}_\mathcal{C}}_w [X^w]$.
\end{cor}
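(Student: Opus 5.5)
The plan is to compose the two positive expansions already established for $\lambda=(n-k,k)$. By Corollary~\ref{cor:expandRCintoSchuberts} we have $[\mathcal{R}_{\mathcal{C}}]=\sum_{\mathcal{M}} g^{\mathcal{M}}_{\mathcal{C}}[\mathcal{B}_{\mathcal{M}}]$ with $g^{\mathcal{M}}_{\mathcal{C}}\in\mathbb{N}$, the sum running over packed noncrossing matchings. By Theorem~\ref{thm:mainconjecture} we have $[\mathcal{B}_{\mathcal{M}}]=\sum_w h^{\mathcal{B}_{\mathcal{M}}}_w[X^w]$, where $h^{\mathcal{B}_{\mathcal{M}}}_w$ counts reduced words $s_{a_1}\cdots s_{a_k}$ of $w$ with $i_p\le a_p<j_p$ (arcs listed big-to-small). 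Substituting the second into the first and using linear independence of the Schubert cycles in $H_\bullet(\fl{n})$ gives
$$h^{\mathcal{R}_{\mathcal{C}}}_w=\sum_{\mathcal{M}} g^{\mathcal{M}}_{\mathcal{C}}\, h^{\mathcal{B}_{\mathcal{M}}}_w .$$
This is a sum of products of nonnegative integers, each of which is produced by an explicit combinatorial procedure.

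To make the algorithm fully explicit, I would spell out how the $g^{\mathcal{M}}_{\mathcal{C}}$ are obtained. Start from $\mathcal{D}_{\mathcal{R}_{\mathcal{C}}}=f_{\mathcal{M}_0}$ (Theorem~\ref{thm:RCcohomology}), where $\mathcal{M}_0$ is the matching read off the two-element parts of $\mathcal{C}$. Repeatedly apply the skein rules (1) and (2).
\begin{enumerate}
\item Each rule replaces one product of $k$ positive roots $z_a-z_b$ ($a<b$) by a sum of such products with coefficient $+1$, and keeps exactly $k$ linear factors.
\item Every intermediate term is therefore $f_{\mathcal{M}'}$ for some $k$-arc matching $\mathcal{M}'$.
\item A term to which neither rule applies has no crossing pair of arcs and no unmatched point under an arc. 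Hence it is packed and noncrossing.
\end{enumerate}
Termination is given by the monovariant stated before Corollary~\ref{cor:expandRCintoSchuberts}. The outcome is an identity $f_{\mathcal{M}_0}=\sum g^{\mathcal{M}}_{\mathcal{C}}f_{\mathcal{M}}$ with $g^{\mathcal{M}}_{\mathcal{C}}$ equal to the number of terminal leaves labelled $\mathcal{M}$. By Theorem~\ref{thm:zizjhomology}, $f_{\mathcal{M}}=\mathcal{D}_{\mathcal{B}_{\mathcal{M}}}$, and the degree polynomial map $H_\bullet(\fl{n})\hookrightarrow\mathbb{Z}[z_1,\ldots,z_n]$ is injective. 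This lifts the polynomial identity to the homology identity above.

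I expect no serious obstacle, since the result is an assembly of earlier statements. The only points needing care are bookkeeping ones.
\begin{enumerate}
\item The reduced-word count in Theorem~\ref{thm:mainconjecture} must be taken with respect to a big-to-small ordering of each terminal matching $\mathcal{M}$ separately.
\item Termination of the skein process needs the monovariant to strictly decrease, or an equivalent argument counting crossings plus unmatched-points-under-arcs.
\end{enumerate}
Optionally, one can remark that summing over the skein tree gives a single combinatorial model: $h^{\mathcal{R}_{\mathcal{C}}}_w$ counts pairs consisting of a leaf of the skein tree and an arc-compatible reduced word for $w$.
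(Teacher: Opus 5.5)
Your proposal is correct and is the paper's proof: compose the skein-relation expansion of $[\mathcal{R}_{\mathcal{C}}]$ into the classes $[\mathcal{B}_{\mathcal{M}}]$ (Corollary~\ref{cor:expandRCintoSchuberts}) with the flagged reduced-word expansion of each $[\mathcal{B}_{\mathcal{M}}]$ (Theorem~\ref{thm:mainconjecture}), giving $h^{\mathcal{R}_{\mathcal{C}}}_w=\sum_{\mathcal{M}} g^{\mathcal{M}}_{\mathcal{C}}\, h^{\mathcal{B}_{\mathcal{M}}}_w$. Your extra unpacking of the skein algorithm is accurate and only makes explicit what the paper leaves implicit: every term stays a product of positive roots with coefficient $+1$, terminal matchings are packed and noncrossing, and the process terminates by the monovariant.
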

\begin{proof}
Corollary~\ref{cor:expandRCintoSchuberts} combinatorially expands $[\mathcal{R}_{\mathcal{C}}]$ into $[\mathcal{B}_\mathcal{M}]$ cycles and then these expand as $[\mathcal{B}_\mathcal{M}]=\sum_{w\in S_n}h^{\mathcal{B}_{\mathcal{M}}}_w[X^w]$ with coefficients the flagged reduced word counts in Theorem~\ref{thm:mainconjecture}.
\end{proof}
\begin{rem}
    For $\lambda=(n-k,k)$, Theorem~\ref{thm:mainconjecture} shows equivalently that there is a combinatorially nonnegative algorithm for the coefficients of the expansion of Joseph polynomials into Schubert degree polynomials
    $$J_T(z_1,\ldots,z_n)=\sum_{w\in S_n}h_w^T\mathcal{D}_w(z_1,\ldots,z_n)\text{ with }h_w^T\ge 0.$$
   Combined with Remark~\ref{rem:JosephintoDegree}, this gives the equivalent statement to Corollary~\ref{cor:RCtoSchub} that Specht polynomials $f_T$ have combinatorially nonnegative expansions into Schubert degree polynomials $\mathcal{D}_w$.
\end{rem}

\begin{eg}\label{eg:running}
    We compute the Schubert expansion of the component $\mathcal{B}_{\mathcal{M}}$ with $\mathcal{M}$ from
    Figure~\ref{fig:matching_and_syt}, with arcs ordered $(1,6)\succ(2,3)\succ(4,5)$.
The degree map corresponding to $\mathcal{B}_\mathcal{M}$ is given by $(\partial_1+\cdots+\partial_5)\partial_2\partial_4$.
To compute the reduced words $s_{a_1}s_{a_2}s_{a_3}$ that contribute we impose $1\leq a_1<6$, $2\leq a_2<3$, and $4\leq a_3<5$.
This yields the set $\{s_1s_2s_4, s_3s_2s_4,s_5s_2s_4\}$.
We thus obtain
\[
    [\mathcal{B}_\mathcal{M}]=[X^{231546}]+[X^{142536}]+[X^{132645}]\in H_\bullet(\fl{6}),
\]
where we have written each $w$ above in one-line notation $w(1)w(2)\cdots w(6)$. Recall that we computed the coefficient of $[X^{132645}]$ to be $1$ earlier in Example~\ref{eg:132645}.
\end{eg}

A key insight to our proof of Theorem~\ref{thm:mainconjecture} is that by Corollary~\ref{cor:BmRcequation} and Theorem~\ref{thm:RCcohomology}
we have
\begin{equation}
\label{eqn:BMispartijprod}
[\mathcal{B}_\mathcal{M}]=\ct\prod_{(i<j)\in \mathcal{M}}\partial_{ij}.
\end{equation}
Recall the operation $Y_Df$ sets the variables $x_i=0$ for $i\in D$.
\begin{prop}
\label{prop:YdelY}
If $1\le i<j\le n$ and both $\{i+1,\ldots,j-1\}\subset D\subset [n]$ and $\{i,j\}\cap D=\emptyset$, then
$$Y_{ij}\partial_{ij}Y_{D}=Y_{D\sqcup \{i,j\}}\sum_{k=i}^{j-1}\partial_k.$$
\end{prop}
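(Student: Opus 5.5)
The identity is between linear operators on $\mathbb{Z}[x_1,\ldots,x_n]$. Both sides lower degree by one and then set certain variables to zero, so I would prove it by reducing to polynomials in the variables $x_i,\ldots,x_j$ and checking on low-degree pieces.

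\textbf{Step 1 (discarding variables outside the interval).} Write $D=D_0\sqcup D_1$ with $D_0=\{i+1,\ldots,j-1\}$ and $D_1=D\setminus D_0$. By hypothesis $D_1$ is disjoint from $[i,j]$. For $d\notin[i,j]$, the substitution $x_d=0$ commutes with $\partial_{ij}$ and with each $\partial_k$, $i\le k\le j-1$, since these operators only involve the variables $x_i,\ldots,x_j$. So $Y_{D_1}$ can be pulled to the far left on both sides, and it suffices to treat $D=D_0$.

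Moreover, both operators are $R$-linear, where $R=\mathbb{Z}[x_m\suchthat m\notin[i,j]]$. This holds because $\partial_{ij}$, the $\partial_k$, and the substitutions $x_m=0$ for $m\in[i,j]$ all treat $R$ as scalars. Hence it suffices to check the identity on monomials $x_i^{b_i}\cdots x_j^{b_j}$ in the interval variables.

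\textbf{Step 2 (degree count).} Let $f$ be a monomial of degree $e$ in $x_i,\ldots,x_j$.
\begin{itemize}
\item On the right side, each $\partial_k f$ is a polynomial in $x_i,\ldots,x_j$ of degree $e-1$ (or zero). Applying $Y_{[i,j]}$ sets all of these variables to zero. The result is therefore zero unless $e=1$.
\item On the left side, $Y_{D_0}f$ is either zero or a monomial in $x_i,x_j$ alone. Then $\partial_{ij}$ lowers its degree by one, and $Y_{ij}$ sets $x_i=x_j=0$. So this side is also zero unless $e=1$.
\end{itemize}

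\textbf{Step 3 (the degree-one case).} Take $f=x_m$ with $i\le m\le j$. We have $\partial_k x_m=\delta_{m,k}-\delta_{m,k+1}$, so the sum telescopes:
$$\sum_{k=i}^{j-1}\partial_k x_m=\begin{cases}1&m=i,\\-1&m=j,\\0&i<m<j.\end{cases}$$
On the left side, $Y_{D_0}x_m=0$ when $i<m<j$. For the endpoints, $\partial_{ij}x_i=1$ and $\partial_{ij}x_j=-1$. The two sides therefore agree.

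There is no real obstacle in this argument. The only point needing care is Step 1: the hypotheses $\{i,j\}\cap D=\emptyset$ and $D\supset\{i+1,\ldots,j-1\}$ are exactly what make the commutation and the $R$-linearity reduction legitimate.
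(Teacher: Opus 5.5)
Your proof is correct and follows essentially the same route as the paper: both discard the spectator variables outside $\{i,\ldots,j\}$, observe that after the zero-substitutions only the degree-one part of $f$ can survive, and conclude by telescoping $\partial_k x_m=\delta_{m,k}-\delta_{m,k+1}$ against $\partial_{ij}x_i=1$ and $\partial_{ij}x_j=-1$. Your explicit $R$-linearity reduction and degree count spell out what the paper compresses into extracting the linear coefficients $[x_a^1]f$.
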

\begin{proof}
    Both sides are operators on $\mathbb{Z}[x_1,\dots,x_n]$; the variables outside $\{i,i+1,\dots,j\}$ are spectators for $\partial_{ij}$ and for each $\partial_k$ with $i\le k<j$, so it suffices to check the identity on a function $f(\{x_a\}_{a\in D\sqcup \{i,j\}})$ of the remaining variables. For such $f$, write $[x_a^1]f$ for the coefficient of $x_a^1$ in $f$ after setting the other variables in $D$ to $0$. Applying the left-hand side gives $[x_i^1]f-[x_j^1]f$, and the $k$'th term on the right-hand side gives $[x_k^1]f-[x_{k+1}^1]f$, so the result follows by telescoping the sum.
\end{proof}
We are now ready to prove our main theorem, resolving \cite[Conjecture 2]{sabandoalvarez2026idealsdefiningcomponentstworow}.
\begin{proof}[Proof of Theorem~\ref{thm:mainconjecture}]
By \eqref{eqn:BMispartijprod} it suffices to prove that $\ct \prod_{(i<j)\in \mathcal{M}}\partial_{ij}=\ct \prod_{(i<j)\in \mathcal{M}}\sum_{a=i}^{j-1}\partial_a.$
Since $Y_{i_aj_a}$ commutes with $\partial_{i_bj_b}$ for $a\ne b$ (they involve disjoint variables) and $\big(\prod_{a}Y_{i_aj_a}\big)Y_{[n]\setminus\bigcup\mathcal{M}}=Y_{[n]}=\ct$, we may interleave the zero-substitutions and write
\[
\ct\prod_{(i<j)\in\mathcal{M}}\partial_{ij}=Y_{i_1j_1}\partial_{i_1j_1}Y_{i_2j_2}\partial_{i_2j_2}\cdots Y_{i_kj_k}\partial_{i_kj_k}Y_{[n]\setminus\{i_1,\dots,i_k,j_1,\dots,j_k\}}.
\]
Because our ordering on $\mathcal{M}$ is a big-to-small ordering, we know that $\{i_b+1,\ldots,j_b-1\}\subset [n]\setminus \{i_1,\ldots,i_b,j_1,\ldots,j_b\}$  for all $1\le b \le k$, so in particular by Proposition~\ref{prop:YdelY} we have $$Y_{i_bj_b}\partial_{i_bj_b}Y_{[n]\setminus\{i_1,\ldots,i_b,j_1,\ldots,j_b\}}=Y_{[n]\setminus \{i_1,\ldots,i_{b-1},j_1,\ldots,j_{b-1}\}}\sum_{a=i_b}^{j_b-1}\partial_a.$$
    Applying this identity successively for $b=k,k-1,\ldots,1$ we obtain \begin{align*}\ct \prod_{(i<j)\in \mathcal{M}}\partial_{ij}&=Y_{i_1j_1}\partial_{i_1j_1}Y_{i_2j_2}\partial_{i_2j_2}Y_{i_3j_3}\cdots Y_{i_kj_k}\partial_{i_kj_k}Y_{[n]\setminus \{i_1,\ldots,i_k,j_1,\ldots,j_k\}}\\
    &=Y_{i_1j_1}\partial_{i_1j_1}Y_{i_2j_2}\partial_{i_2j_2}Y_{i_3j_3}\cdots Y_{i_{k-1}j_{k-1}}\partial_{i_{k-1}j_{k-1}}Y_{[n]\setminus \{i_1,\ldots,i_{k-1},j_1,\ldots,j_{k-1}\}}\sum_{a=i_k}^{j_k-1}\partial_a
    \\
    &=\cdots = Y_{[n]}\prod_{(i<j)\in \mathcal{M}}\sum_{a=i}^{j-1}\partial_a=\ct \prod_{(i<j)\in \mathcal{M}}\sum_{a=i}^{j-1}\partial_a\end{align*}
    as desired.
\end{proof}

We note similar reasoning allows us to compute the full pullback map
$\iota^*\colon H^\bullet(\fl{n})\to H^\bullet(\mathcal{B}_{\mathcal{M}})$ on the entire Schubert
basis, not just in degree $k$. Indeed, $H^\bullet(\mathcal{B}_\mathcal{M})$ is
spanned by the squarefree monomials $\prod_{p\in S}y_{i_p}$ for $S\subseteq\{1,\dots,k\}$, and the same combinatorial proofs applied to the (not necessarily packed) submatching $\mathcal{M}_S=\{(i_a,j_a)\suchthat a\in S\}\subset \mathcal{M}$ show
$$[\prod_{p\in S}y_{i_p}]\iota^*f=\langle \prod_{(i<j)\in \mathcal{M}_S}(z_i-z_j),f\rangle=\ct\prod_{(i<j)\in \mathcal{M}_S}\partial_{ij}f=\ct\prod_{p\in S}\sum_{a=i_p}^{j_p-1}\partial_a f.$$
Denoting $\ell(w)$ for the length of $w\in S_n$, this implies
\begin{equation}
\label{eqref:NSw}
\iota^*\schub{w}
=\sum_{S\subseteq\{1,\dots,k\}\text{ and }|S|=\ell(w)}
N_S(w)\,\prod_{p\in S}y_{i_p},
\end{equation}
where $N_S(w)$ is the number of reduced words $s_{a_{p_1}}\cdots s_{a_{p_{\ell(w)}}}$ for
$w$ with $i_p\le a_p<j_p$ for each $p\in S$, the letters taken in the big-to-small
order on $S$. The case $S=\{1,\dots,k\}$ recovers $h^{\mathcal{B}_{\mathcal{M}}}_w$, and in
particular $\iota^*\schub{w}=0$ whenever $\ell(w)>k$.

Recall that $\schub{w}$ also admits a well-known monomial expansion
 over reduced pipe dreams $D$ \cite{BJS93}, and applying
$\iota^*$ termwise expresses $N_S(w)$ as a \textit{signed} sum over those
pipe dreams whose cross tiles lie exclusively in the rows $\bigcup\mathcal{M}_S$.
\begin{question}
   Can \eqref{eqref:NSw} be shown directly from the reduced pipe-dream expansion?
\end{question}

\section{The reverse Artin representative of the  Poincar\'e dual class}

The cohomology ring $H^\bullet(\fl{n})$ has a distinguished \emph{reverse Artin monomial basis}
$$\{x_1^{i_1}\cdots x_{n-1}^{i_{n-1}}\suchthat 0\le i_j\le n-j\}\subset H^\bullet(\fl{n}),$$
the leading terms of the Schubert polynomials $\{\schub{w}\suchthat w\in S_n\}$.
We now verify a conjecture of Precup--Sabando-Alvarez \cite[Conjecture 1]{sabandoalvarez2026idealsdefiningcomponentstworow} on the reverse Artin monomial representative of the Poincar\'e dual class. This means the determination of an element $P_{\mathcal{M}}\in H^\bullet(\fl{n})$ in the span of the reverse Artin monomials such that
$$\int_{\XM}f=\int_{\fl{n}}P_{\mathcal{M}}f.$$
In \cite[Lemma 7.12]{sabandoalvarez2026idealsdefiningcomponentstworow} the general case is reduced to the case that $\mathcal{M}$ is a packed noncrossing matching with \emph{no unmatched vertices}, i.e. $n=2|\mathcal{M}|$, leaving the following conjecture that we now prove.
\begin{thm}[{\cite[Conjecture~1]{sabandoalvarez2026idealsdefiningcomponentstworow}}]\label{thm:conj1}
Let $\mathcal{M}$ be a packed
noncrossing  matching on $[n]$ with $k$ arcs,  where $n=2k$, and write $S(j)$ for the number of
arcs with left endpoint $\le j$. Then
the reverse Artin monomial representative of the Poincar\'e dual class of $[\XM]$ is
\[
  P_{\mathcal{M}}=\prod_{j:\,S(j)<k}x_j^{\,2(k-S(j))}
  \cdot\prod_{(i<j)\in \mathcal{M}}e_{j-i-1}(x_i,\dots,x_{j-1})\in H^\bullet(\fl{n}).
\]
\end{thm}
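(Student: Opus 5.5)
The plan is to check the defining identity $\int_{\fl{n}}P_{\mathcal{M}}f=\int_{\XM}f$ on all of $H^\bullet(\fl{n})$ by reducing both sides to divided-difference operators, and then to argue by induction on $k$.

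\textbf{Step 1: admissibility.} First I check that $P_{\mathcal{M}}$ lies in the span of the reverse Artin monomials. Expanding $e_{j-i-1}(x_i,\dots,x_{j-1})$ gives squarefree monomials in the interval $[i,j-1]$. So the exponent of $x_m$ in any monomial of $P_{\mathcal{M}}$ is at most
\[
2(k-S(m))+\#\{\text{arcs }(i<j)\text{ with } i\le m<j\}.
\]
Since $n=2k$ and the matching is packed with no unmatched points, the open arcs at position $m$ number $2S(m)-m$. The bound is therefore $2k-m=n-m$, as required. For $m=n$ there is no contribution at all, since every arc has $j-1<n$ and $S(n)=k$.

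\textbf{Step 2: reduction to operators.} By Fact~\ref{fact:flagdel}, $\int_{\fl{n}}=\ct\partial_{w_\circ}$. By \eqref{eqn:BMispartijprod} and Proposition~\ref{prop:YdelY},
\[
\int_{\XM}=\ct\prod_{(i<j)\in\mathcal{M}}\partial_{ij}=Y_{i_1j_1}\partial_{i_1j_1}\cdots Y_{i_kj_k}\partial_{i_kj_k}.
\]
The claim thus becomes the operator identity
\[
\ct\,\partial_{w_\circ}\bigl(P_{\mathcal{M}}\,\cdot\,\bigr)=\ct\prod_{(i<j)\in\mathcal{M}}\partial_{ij}
\]
on $\mathbb{Z}[x_1,\dots,x_n]$. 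It suffices to test this on all polynomials, since both sides kill the symmetric ideal. For the left side this holds because $\partial_{w_\circ}$ is symmetric-linear and $P_{\mathcal{M}}$ is a fixed polynomial; for the right side it is \eqref{eqn:pullsymthrough}.

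\textbf{Step 3: induction on $k$.} Since $1$ is matched, the arc $(1,j_1)$ is outermost and $\mathcal{M}$ splits as the inner matching $\mathcal{M}'$ on $[2,j_1-1]$ together with the matching $\mathcal{M}''$ on $[j_1+1,n]$. I factor $\partial_{w_\circ}$ according to $\fl{n}\to\mathrm{Gr}$, as a symmetrizer of the block $[1,j_1]$ against $[j_1+1,n]$ composed with $\partial_{w_\circ}$ of each block.

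Within each block I then peel off the first variable using $\partial_{w_\circ}=\partial_{w_\circ'}\,\partial_{n-1}\cdots\partial_1$, which is the cycle factorization isolating $x_1$. This lets me integrate out $x_1$ and $x_{j_1}$ first, leaving the inner and outer pieces.

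The factor $x_m^{2(k-S(m))}$ should be exactly what makes the Grassmannian symmetrizer collapse to evaluating the blocks separately; this is a Vandermonde-type computation. The factor $e_{j_1-2}(x_1,\dots,x_{j_1-1})$ should then convert the full symmetrizer of $\{1,\dots,j_1\}$, with the interior already ``integrated'' against $P_{\mathcal{M}'}$, into the single long-range operator $\partial_{1j_1}$ followed by $Y_{1j_1}$. For this I would use the identity
\[
\partial_{j-1}\cdots\partial_{i}\bigl(e_{j-i-1}(x_i,\dots,x_{j-1})\,g\bigr)\Big|_{\text{interior}=0}=\partial_{ij}g\Big|_{\dots},
\]
which I would prove by the same telescoping as Proposition~\ref{prop:YdelY}, together with $e_{m}(x_i,\dots,x_{i+m})=x_i\cdots x_{i+m}$ modulo lower variables set to zero.

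\textbf{Main obstacle.} The hardest step will be making this factorization of $\partial_{w_\circ}$ compatible with the exponents $2(k-S(m))$ across the two blocks. These exponents depend globally on $k$, not just on the block, so the induction hypothesis must be applied to $\mathcal{M}'$ and $\mathcal{M}''$ with shifted exponents. I would first handle this by a sanity check on small cases, $\mathcal{M}=\{(1,2),(3,4)\}$ and $\{(1,4),(2,3)\}$, computing $\ct\partial_{w_\circ}(P_{\mathcal{M}}\schub{w})$ against the flagged reduced-word counts of Theorem~\ref{thm:mainconjecture}. This fixes the precise form of the block identity.

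If the direct induction proves unwieldy, the fallback is a dual check. Pair $P_{\mathcal{M}}$ against the Schubert basis, $\int_{\fl{n}}P_{\mathcal{M}}\schub{w}=\ct\partial_{w_\circ}(P_{\mathcal{M}}\schub{w})$, and compare with $h_w^{\XM}$ from Theorem~\ref{thm:mainconjecture}. By Step 1 only the reverse Artin coefficients of $P_{\mathcal{M}}$ matter, and this reduces the problem to a triangularity argument.
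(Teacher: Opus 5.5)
\textbf{There is a genuine gap in Step 3.} Your Step 1 is exactly Lemma~\ref{lem:revArtinMembership}. Your reduction of the theorem to $\ct\partial_{w_\circ}(P_{\mathcal{M}}f)=\ct\prod_{(i<j)\in\mathcal{M}}\partial_{ij}f$ is also correct. But the connected case, which is the whole content of the theorem, is only described by what ``should'' happen, and the one concrete tool you offer fails as written.

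Because $e_{j-i-1}(x_i,\dots,x_{j-1})$ is symmetric in $x_i,\dots,x_{j-1}$, it passes through $\partial_i,\dots,\partial_{j-2}$. So with $\partial_i$ applied first, as you wrote it, your left side vanishes on every degree-one $g$ once $j-i\ge 3$. For example, $\partial_3\partial_2\partial_1\bigl(e_2(x_1,x_2,x_3)\,x_1\bigr)=\partial_3\partial_2\,e_2(x_1,x_2,x_3)=0$, while $\partial_{14}x_1=1$.

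You also never say how $x_1^{n-2}$ and the factorization $\partial_{w_\circ}=\partial_{w_\circ'}\partial_{n-1}\cdots\partial_1$ interact with $e_{n-2}(x_1,\dots,x_{n-1})$. That factor couples $x_1$ to every interior variable, so ``integrating out $x_1$ and $x_{j_1}$ first'' is not actually available.

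\textbf{The missing idea is to work modulo the symmetric ideal rather than seek an operator identity.} For connected $\mathcal{M}=\{(1,n)\}\sqcup\mathcal{M}'$ one has $P_{\mathcal{M}}=x_1^{n-2}e_{n-2}(x_1,\dots,x_{n-1})P_{\mathcal{M}'}$. Take the $q^{n-2}$-coefficient of $\prod_{i<n}(1+qx_i)=\prod_{i\le n}(1+qx_i)\,(1+qx_n)^{-1}$. Together with \eqref{eqn:pullsymthrough}, this lets you replace $e_{n-2}(x_1,\dots,x_{n-1})$ by $(-1)^{n-2}x_n^{n-2}$ under $\ct\partial_{w_\circ}$.

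Fact~\ref{fact:flagdel} then shows that $\ct\partial_{w_\circ}(x_1^{n-2}x_n^{n-2}x^a)\neq 0$ only if $(a_1,a_n)\in\{(1,0),(0,1)\}$ and $(a_2,\dots,a_{n-1})$ permutes $(0,\dots,n-3)$, with opposite signs in the two cases. This equals $\ct\partial_{\{2,\dots,n-1\}}(f_1-f_n)$. Here $f_1$ is the coefficient of $x_1^1$ in the part of $f$ free of $x_n$, and $f_n$ is the coefficient of $x_n^1$ in the part free of $x_1$. The inductive hypothesis for $\mathcal{M}'$, together with $\langle\prod_{\mathcal{M}}(z_i-z_j),f\rangle=\langle\prod_{\mathcal{M}'}(z_i-z_j),f_1-f_n\rangle$, finishes the connected case.

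\textbf{Your ``main obstacle'' is not one.} With $\mathcal{M}_1=\{(1,j_1)\}\sqcup\mathcal{M}'$ one has $P_{\mathcal{M}}=\prod_{m\le j_1}x_m^{n-j_1}\cdot P_{\mathcal{M}_1}\cdot P_{\mathcal{M}''}$, with no shifted exponents. Fact~\ref{fact:flagdel} then gives $\ct\partial_{w_\circ}\bigl(\prod_{m\le j_1}x_m^{n-j_1}g\bigr)=\ct\partial_{\{1,\dots,j_1\}}\partial_{\{j_1+1,\dots,n\}}g$. The paper itself simply cites \cite[Lemma 7.12]{sabandoalvarez2026idealsdefiningcomponentstworow} for this reduction.

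Your fallback is not a reduction either: computing $\ct\partial_{w_\circ}(P_{\mathcal{M}}\schub{w})$ for all $w$ is the original problem.
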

Since
$[\mathcal{B}_{\mathcal{M}}]=\prod_{(i<j)\in \mathcal{M}}(z_i-z_j)$, by Fact~\ref{fact:flagdel} the statement that $P_{\mathcal{M}}$ represents the Poincar\'e dual class of $[\mathcal{B}_{\mathcal{M}}]$
is equivalent to
\begin{equation}\label{eq:star}
  \ct\partial_{w_{\circ}}(P_{\mathcal{M}}\,f)=\big\langle\prod_{(i<j)\in \mathcal{M}}(z_i-z_j),\,f\big\rangle.
  \tag{$\star$}
\end{equation}
As the reverse Artin monomials form a $\mathbb{Z}$-basis of $H^\bullet(\fl{n})$, the reverse Artin representative of a class is unique, so Theorem~\ref{thm:conj1} is equivalent to \eqref{eq:star} in conjunction with the following count.

\begin{lem}\label{lem:revArtinMembership}
Every monomial occurring in the expansion of $P_{\mathcal{M}}$ satisfies $\deg_{x_t}\le n-t$ for $1\le t\le n$. In particular $P_{\mathcal{M}}$ lies in the span of the reverse Artin monomials.
\end{lem}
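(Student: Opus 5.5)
The plan is to bound $\deg_{x_t}$ of each monomial separately for the two kinds of factors in $P_{\mathcal{M}}$, and then to compare the total with $n-t$ by counting the endpoints of arcs that lie to the left of $t$.

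Fix $1\le t\le n$. In the first product only the factor $x_t^{2(k-S(t))}$ involves $x_t$, and it is present exactly when $S(t)<k$. Each elementary symmetric polynomial $e_{j-i-1}(x_i,\dots,x_{j-1})$ is squarefree. Moreover, it involves $x_t$ only when $i\le t\le j-1$, that is, when the arc $(i<j)$ is \emph{open at $t$}. Let $O(t)$ denote the number of arcs $(i<j)\in\mathcal{M}$ with $i\le t<j$. Since the factors are multiplied and every term of each $e$ is squarefree, every monomial in the expansion of $P_{\mathcal{M}}$ satisfies
\[
\deg_{x_t}\le 2(k-S(t))+O(t).
\]
This also holds when $S(t)=k$, because then the first factor contributes $0=2(k-S(t))$.

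Next I will compute $n-t$ in the same terms, using the hypothesis that there are no unmatched vertices. Every element of $\{1,\dots,t\}$ is either a left endpoint or a right endpoint of some arc. There are $S(t)$ left endpoints in $\{1,\dots,t\}$. The right endpoints $\le t$ belong exactly to the arcs whose left endpoint is $\le t$ and which are not open at $t$, so there are $S(t)-O(t)$ of them. Hence $t=2S(t)-O(t)$, and therefore
\[
n-t=2k-2S(t)+O(t),
\]
which is precisely the bound obtained above. This proves $\deg_{x_t}\le n-t$ for every $t$. In particular $\deg_{x_n}=0$, so each monomial $x_1^{c_1}\cdots x_n^{c_n}$ of $P_{\mathcal{M}}$ has $c_n=0$ and $0\le c_j\le n-j$ for $j<n$; thus it is one of the reverse Artin monomials, and $P_{\mathcal{M}}$ lies in their span.

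I do not expect a genuine obstacle. The only points needing care are that $e_{j-i-1}$ ranges over $x_i,\dots,x_{j-1}$, so that an arc counts at $t$ exactly when $i\le t<j$, and the counting identity $t=2S(t)-O(t)$, which is exactly where the assumption $n=2k$ is used. Neither the noncrossing nor the packed condition is needed for this lemma.
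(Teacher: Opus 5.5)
Your proposal is correct and is essentially the paper's proof. Both arguments bound the $x_t$-exponent by $2(k-S(t))$ from the monomial factor plus at most one for each arc with $i\le t<j$, using that $e_{j-i-1}$ is squarefree. Both then count $S(t)$ left and $t-S(t)$ right endpoints in $[t]$ to obtain $O(t)=2S(t)-t$, which makes the bound exactly $n-t$; your remark that only the perfect-matching hypothesis is used is also accurate.
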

\begin{proof}
Fix $t\in [n]$ and count the exponent of $x_t$ contributed by each factor of $P_{\mathcal{M}}$. The first product contributes exactly $2(k-S(t))$, where this is $0$ when $S(t)=k$. Each factor $e_{j-i-1}(x_i,\ldots,x_{j-1})$ is squarefree in its variables, so the arc $(i,j)$ contributes at most $1$, and only when $i\le t<j$. Since $\mathcal{M}$ is a perfect matching of $[n]$, the integers $1,\ldots,t$ comprise $S(t)$ left endpoints and $t-S(t)$ right endpoints of arcs of $\mathcal{M}$, and every arc whose right endpoint is at most $t$ has its left endpoint at most $t$ as well; hence exactly $S(t)-(t-S(t))=2S(t)-t$ arcs $(i,j)$ satisfy $i\le t<j$. Altogether the exponent of $x_t$ in any monomial of $P_{\mathcal{M}}$ is at most
$$2(k-S(t))+\big(2S(t)-t\big)=2k-t=n-t.\qedhere$$
\end{proof}
In light of Lemma~\ref{lem:revArtinMembership} it suffices to establish~\eqref{eq:star}.
We start by considering \emph{connected} matchings, meaning $(1,n)\in \mathcal{M}$.
For $f\in\mathbb{Z}[x_1,\dots,x_n]$ let $f_1=Y_{1n}\frac{d}{dx_1}f\in \mathbb{Z}[x_2,\ldots,x_{n-1}]$ and $f_n=Y_{1n}\frac{d}{dx_n}f\in \mathbb{Z}[x_2,\ldots,x_{n-1}]$ be the coefficients of $x_1^1$ and $x_n^1$ in
the parts of $f$ free of $x_n$, resp.\ $x_1$. Let $\mathcal{M}'$ be the matching on $\{2,\dots,n-1\}$ resulting when we delete the $(1,n)$ arc from $\mathcal{M}$.
Note that
\begin{equation}
\label{eqn:remove1n} \big\langle\prod_{(i<j)\in \mathcal{M}}(z_i-z_j),\,f\big\rangle=\big\langle \prod_{(i<j)\in \mathcal{M}'}(z_i-z_j),\,f_1-f_n\big\rangle,
\end{equation}
where the right-hand pairing can be done in variables $2,\ldots,n-1$.
Note also that the definition of $P_{\mathcal{M}}$ gives
\begin{equation}\label{eqn:MtoM'}
  P_{\mathcal{M}}=x_1^{\,n-2}\,e_{n-2}(x_1,\dots,x_{n-1})\,P_{\mathcal{M}'} .
\end{equation}

\begin{lem}\label{lem:nest}
We have\[
 \ct\partial_{w_{\circ}}\big(x_1^{\,n-2}e_{n-2}(x_1,\dots,x_{n-1})\,f\big)=\ct\partial_{2\cdots(n-1)}\big(f_1-f_n\big).
\]
\end{lem}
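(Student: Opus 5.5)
The plan is to first simplify the prefactor $x_1^{n-2}e_{n-2}(x_1,\dots,x_{n-1})$ inside $H^\bullet(\fl{n})$, and then evaluate both sides monomial by monomial using Fact~\ref{fact:flagdel}. Both sides of the lemma are linear in $f$, and the left side depends only on the class of $x_1^{n-2}e_{n-2}(x_1,\dots,x_{n-1})f$ in the symmetric coinvariants. This is because $\ct\partial_{w_\circ}$ kills $(g-\ct g)h$ for symmetric $g$, by \eqref{eqn:pullsymthrough} with $D=[n]$.

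\textbf{Step 1: simplify the prefactor.} From $\prod_{i=1}^n(1+x_it)\equiv 1$ modulo the symmetric ideal we get
\[
\prod_{i<n}(1+x_it)\equiv(1+x_nt)^{-1}.
\]
Comparing coefficients gives $e_r(x_1,\dots,x_{n-1})\equiv(-x_n)^r$ in $H^\bullet(\fl{n})$. Hence the prefactor becomes $x_1^{n-2}x_n^{n-2}$, since $(-1)^{n-2}=(-1)^n$. The left side is therefore
\[
(-1)^n\int_{\fl{n}}x_1^{n-2}x_n^{n-2}f .
\]

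\textbf{Step 2: determine which monomials survive.} Take $f=x^b$ of degree $\binom{n}{2}-2(n-2)=\binom{n-2}{2}+1$. By Fact~\ref{fact:flagdel}, the integral is nonzero only if the exponent vector of $x_1^{n-2}x_n^{n-2}x^b$ is a permutation of $(n-1,\dots,0)$. Since $b_1,b_n\ge 0$ and the exponents of $x_1$ and $x_n$ must be distinct values at least $n-2$, only two cases remain.
\begin{itemize}
\item[(i)] $b_1=1$, $b_n=0$: the exponents of $x_1,x_n$ are $n-1,n-2$.
\item[(ii)] $b_1=0$, $b_n=1$: the exponents of $x_1,x_n$ are $n-2,n-1$.
\end{itemize}
In both cases the exponents of $x_2,\dots,x_{n-1}$ must form a permutation of $(n-3,\dots,0)$. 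These are exactly the monomials contributing to $f_1$ and $f_n$ respectively. Moreover, $\ct\partial_{2\cdots(n-1)}$ applied to the surviving $D$-part is computed by Fact~\ref{fact:flagdel} for $\fl{n-2}$ in the variables $x_2,\dots,x_{n-1}$. This agrees with the right side, whose nonzero terms come from the same monomials.

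\textbf{Step 3: match the signs.} In case (i) the permutation $\sigma\in S_n$ satisfies $\sigma(1)=1$ and $\sigma(n)=2$. Its sign equals $(-1)^{n-2}$ times the sign of the induced permutation $\sigma'$ of the middle block: moving the value $2$ from position $n$ past the $n-2$ middle positions contributes $n-2$ inversions. Case (ii) differs from case (i) by the transposition of the exponents of $x_1$ and $x_n$, so it carries an extra minus sign. Combined with the overall $(-1)^n$ from Step 1, this gives exactly $\ct\partial_{2\cdots(n-1)}(f_1-f_n)$.

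I expect the sign bookkeeping in Step 3 to be the only delicate point. I would verify it on the smallest case $n=2$ (trivially $f=f_1x_1+f_nx_n$ of degree $1$, where the left side is $\ct\partial_1 f=f_1-f_n$) and on $n=4$ before writing it in general.
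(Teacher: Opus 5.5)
Your proposal is correct and follows essentially the same route as the paper. Like the paper, you reduce $e_{n-2}(x_1,\dots,x_{n-1})$ to $(-x_n)^{n-2}$ modulo symmetric polynomials via \eqref{eqn:pullsymthrough}, then use Fact~\ref{fact:flagdel} to force $(b_1,b_n)\in\{(1,0),(0,1)\}$; your explicit sign count ($\ell(\sigma)=n-2+\ell(\sigma')$ in case (i), with one extra transposition in case (ii)) correctly supplies the bookkeeping that the paper calls immediate.
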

\begin{proof}
By linearity it suffices to take $f=x_1^{b_1}\cdots x_n^{b_n}$. Writing $\bar b=(b_2,\dots,b_{n-1})$ and $\sgn(\bar
b)$ the sign of the permutation sorting $\bar b$ into decreasing order, we have by Fact~\ref{fact:flagdel} that

\begin{align}\label{eq:summary_rhs}
\ct\partial_{w_{\circ}}\big(f_1-f_n\big)=
\begin{cases}
\sgn(\bar b) & \text{if } b_1=1,\ b_n=0, \{b_2,\dots,b_{n-1}\}=\{0,\dots,n-3\}\\[2pt]
-\sgn({\bar b}) & \text{if } b_1=0,\ b_n=1, \{b_2,\dots,b_{n-1}\}=\{0,\dots,n-3\}\\[2pt]
0 & \text{otherwise.}
\end{cases}
\end{align}
    For $q$ an indeterminate the $q^{n-2}$-coefficient of the identity
    $(1+qx_1)\cdots(1+qx_{n-1})=(1+qx_1)\cdots (1+qx_n)(1+qx_n)^{-1}$ shows that
    $$e_{n-2}(x_1,\ldots,x_{n-1})=\sum_{j=0}^{n-2}(-1)^je_{n-2-j}(x_1,\ldots,x_n)x_n^j,$$
    so by \eqref{eqn:pullsymthrough} we have
    $$\ct\partial_{w_\circ}(x_1^{n-2}e_{n-2}(x_1,\ldots,x_{n-1})x_1^{a_1}\cdots x_n^{a_n})=(-1)^{n-2}\ct\partial_{w_{\circ}}((x_1^{n-2}x_{n}^{n-2})x_1^{a_1}\cdots x_n^{a_n}).$$
    By Fact~\ref{fact:flagdel} this is nonzero if and only if $(n-2+a_1,a_2,a_3,\ldots,a_{n-1},n-2+a_n)=(n-\sigma(1),n-\sigma(2),\ldots,n-\sigma(n))$ for some $\sigma\in S_n$, in which case it is $(-1)^{n-2+\ell(\sigma)}$. This forces $(a_1,a_n)=(1,0)$ or $(0,1)$ and the identification with \eqref{eq:summary_rhs} immediately follows.
\end{proof}
\begin{proof}[{Proof of Theorem~\ref{thm:conj1}}]
We now establish~\eqref{eq:star}.
Induct on $k$.
For $k=1$, $\mathcal{M}=\{(1,2)\}$, $P_{\mathcal{M}}=1$, and \eqref{eq:star} is
Lemma~\ref{lem:nest}.
Now suppose $k>1$. If $\mathcal{M}$ is  composed of connected matchings $\mathcal{M}_1,\mathcal{M}_2,\ldots,\mathcal{M}_r$ on disjoint intervals $I_1,\ldots,I_r\subset [n]$ then \cite[Lemma 7.12]{sabandoalvarez2026idealsdefiningcomponentstworow} shows that the result for $\mathcal{M}_1,\ldots,\mathcal{M}_r$ implies the result for $\mathcal{M}$. It remains to verify the case that $\mathcal{M}$ is connected. As before we write $\mathcal{M}=\{(1,n)\}\sqcup \mathcal{M}'$.
By  \eqref{eqn:MtoM'}, Lemma~\ref{lem:nest}, the inductive hypothesis for $\mathcal{M}'$, and \eqref{eqn:remove1n}, we deduce
\begin{align}\label{eq:reduced}
  \ct\partial_{w_{\circ}}(P_{\mathcal{M}} f)=\ct\partial_{2\cdots(n-1)}\big(P_{\mathcal{M}'}\,(f_1-f_n)\big)&=\big\langle\textstyle\prod_{(i<j)\in\mathcal{M}'}(z_i-z_j),\,f_1-f_n\big\rangle\nonumber\\&=\big\langle \prod_{(i<j)\in \mathcal{M}}(z_i-z_j),f\big\rangle.\qedhere
\end{align}
\end{proof}

\section{Further varieties indexed by matchings}
\label{sec:FurtherMatchings}

We conclude by relating our varieties to two families indexed by matchings that have appeared in the literature: the Poisson-geometric Richardson varieties of Casbi--Masoomi--Yakimov~\cite{CMY}, and certain torus-orbit closures inside the quasisymmetric flag variety $\operatorname{QFL}_n$ of \cite{BGNST2}. In each case we deduce a combinatorially nonnegative Schubert cycle decomposition.

\smallskip
\noindent\textit{Poisson degeneracy loci.}
For a connected semisimple group~$G$, Casbi--Masoomi--Yakimov \cite{CMY} describe the strata of the
Poisson degeneracy locus of $G/B$ as the union of the Richardson varieties $\mathcal{R}_{v,w}^{CMY}$ whose
defining pair satisfies $v\tau_{\gamma_1}\cdots \tau_{\gamma_k} = w$ for pairwise orthogonal
positive roots $\gamma_1,\dots,\gamma_k$ with $k=\ell(w)-\ell(v)$
\cite[Thm.~A]{CMY}.
They established that the corresponding closed Richardson variety
is isomorphic to
$\mathcal{R}_{v,w}^{CMY}\cong(\mathbb{CP}^1)^k$ with Bruhat interval $[v,w]$ isomorphic
to the rank $k$ Boolean lattice \cite[Thm.~B]{CMY}.
The intermediate elements of $[v,w]$ correspond to subproducts $v\prod_{i\in S}\tau_{\gamma_i}$ for $S\subset [k]$.

In type~$A$ the
pairwise orthogonality of the roots is exactly the disjointness of the pairs
$(i_a,j_a)$ associated to $\gamma_a$, so they form a matching $\mathcal{M}$. In the special case $$\{w\tau_{\gamma_1}w^{-1},\ldots,w\tau_{\gamma_k}w^{-1}\}=\{(1,2),(3,4),\ldots,(2k-1,2k)\},$$ we recover the Richardsons considered above.

Let $\operatorname{Diag}_n\coloneqq B\cap B^-\subset \GL_n$ denote the torus of diagonal matrices. To see that $\mathcal{R}_{v,w}^{CMY}$ arise as translates of Levi-Richardsons $\mathcal{R}_\mathcal{C}$ associated to two-row partitions $\lambda=(n-k,k)$ in general, note that if $\mathcal{R}_{v,w}^{CMY}$ and $\mathcal{R}_{v',w'}^{CMY}$ share the same matching then $(v'v^{-1})\mathcal{R}_{v,w}^{CMY}=\mathcal{R}_{v',w'}^{CMY}$ as they share the same $\operatorname{Diag}_n$-fixed points (i.e. permutation matrices) and Richardson varieties are $\operatorname{Diag}_n$-convex varieties with respect to the diagonal  torus \cite[Theorem 6.3]{BCP25}. In particular, if $\mathcal{M}$ is the matching induced by the supports of the reflections $\tau_{\gamma_1},\ldots,\tau_{\gamma_k}$, then by Theorem~\ref{thm:RCcohomology} we have
$$[\mathcal{R}_{v,w}^{CMY}]=[\mathcal{R}_{\mathcal{C}}]=\prod_{(i<j)\in \mathcal{M}}(z_i-z_j)\in H_\bullet(\fl{n})$$
for any $\mathcal{C}\in S_{\lambda^{\top}}\setminus S_n$ whose underlying matching is $\mathcal{M}$, and Corollary~\ref{cor:RCtoSchub} then gives a combinatorially nonnegative Schubert cycle decomposition.

\smallskip
\noindent\textit{Torus-orbit closures in the quasisymmetric flag variety.}
    Now consider again for $\lambda=(n-k,k)$ the Levi-Richardson varieties $\mathcal{R}_{\mathcal{C}}$ where the associated matching for $\mathcal{C}$ is noncrossing (not necessarily packed). Then $(\min \mathcal{C})^{-1}\mathcal{R}_{\mathcal{C}}$ contains exactly those $\operatorname{Diag}_n$-fixed points associated to a subproduct of the transpositions of the noncrossing matching. This is a special case of an algebraic noncrossing partition, and therefore $(\min \mathcal{C})^{-1}\mathcal{R}_{\mathcal{C}}\subset \operatorname{QFL}_n$, the quasisymmetric flag variety, developed by the present authors together with Bergeron, Gagnon, Nadeau in \cite{BGNST2}.
    In the nested bicoloured forest encoding of \textit{ibid.}, each arc $(i,j)\in\mathcal{M}$
contributes a single-node black tree supported on $\{i,j\}$, and this is the torus-orbit
closure $X(\mathcal{M})$ attached to this nested black forest. Its
combinatorially positive Schubert cycle expansion follows either from the results of the
present paper or from prior work on Schubert cycle expansions of torus-orbit closures in
$\operatorname{QFL}_n$ \cite{NST_c,BGNST2}.

\bibliographystyle{hplain}
\bibliography{qfl.bib}

\end{document}